\theoremstyle{thmstyleone}%
\newtheorem{theorem}{Theorem}
\newtheorem{proposition}[theorem]{Proposition}%
\theoremstyle{thmstyletwo}%
\newtheorem{example}{Example}%
\newtheorem{remark}{Remark}%
\theoremstyle{thmstylethree}%
\newtheorem{lemma}{Lemma}[section]
\newcommand{\E}{\mathbb{E}}
\newcommand{\PP}{\mathbb{P}}
\newcommand{\RR}{\mathbb{R}}
\def\br{\breve}
\def\e{\varepsilon}   
   \def\s{\sigma}
\def\tr{\triangle}
\def\<{\langle} \def\>{\rangle}
\def\nn{\nonumber}
\def\bc{\begin{center}}       \def\ec{\end{center}}
\def\ba{\begin{array}}        \def\ea{\end{array}}
\def\be{\begin{equation}}     \def\ee{\end{equation}}
\def\bea{\begin{eqnarray}}    \def\eea{\end{eqnarray}}
\def\beaa{\begin{eqnarray*}}  \def\eeaa{\end{eqnarray*}}
\def\la{\label}
\def\v{\vert}
\begin{document}

\title[Explicit approximation of the invariant measure for  SDDEs]{Explicit approximation of the invariant measure for SDDEs with the nonlinear diffusion term}


\author[1]{\fnm{Xiaoyue} \sur{Li}}\email{lixy209@nenu.edu.cn}
\equalcont{These authors contributed equally to this work.}
\author[2]{\fnm{Xuerong} \sur{Mao}}\email{x.mao@strath.ac.uk}
\equalcont{These authors contributed equally to this work.}

\author*[1]{\fnm{Guoting} \sur{Song}}\email{songgt606@nenu.edu.cn}

\affil*[1]{\orgdiv{School of Mathematics and Statistics}, \orgname{Northeast Normal University}, \orgaddress{\street{No. 5268, Renmin Street}, \city{Changchun}, \postcode{130024}, \state{Jilin}, \country{China}}}

\affil[2]{\orgdiv{Department of Mathematics and Statisticst}, \orgname{University of Strathclyde}, \orgaddress{\street{26 Richmond Street}, \city{Glasgow}, \postcode{G1 1XH}, 

\country{UK}}}



\abstract{To our knowledge,  the existing measure approximation theory  
requires the diffusion term of  the stochastic  delay differential equations  (SDDEs) to be globally Lipschitz continuous.
Our work is to develop a new explicit numerical method for SDDEs with the nonlinear diffusion term  and establish the measure approximation theory. Precisely,
 we construct a function-valued  explicit truncated Euler-Maruyama segment process (TEMSP) and prove that it admits a unique ergodic numerical invariant measure. We also prove that the numerical invariant measure converges to the underlying one of SDDE in the Fortet-Mourier distance.  Finally, we give an example and numerical simulations to support our theory.}

\keywords{Stochastic delay differential equations, truncated Euler-Maruyama segment proces, stability in distribution, numerical invariant measure}



\maketitle

\section{Introduction}\label{sec1}

\label{Tntr}

Since time delays are omnipresent and entrenched in real systems, delay differential equations
have a wide range of emerging and existing
applications in, for instance, physics, biology, medical sciences, automatic control systems; see, e.g., \cite{Kolmanovskii-Nosov, Monk2003, Nicaise-Pignotti,Shaikhet}.
On the other hands, systems in the real word are always subject to environmental noise. Stochastic  delay  differential equations (SDDEs) have become more and more popular mathematical models for many real systems; see e.g. \cite{Mao2007, 
Mcshame, Mohammed} and references therein.  
Asymptotic stability is one of the most important topics in the study of SDDEs.
There are two fundamental categories: (ASE) asymptotic stability of an equilibrium state; (ASD) asymptotic stability in distribution.
ASE is to study whether the solutions of a given SDDE system will tend to the equilibrium state (e.g., 0 as in most papers) in moment or in probability; while ASD is to  study whether the probability distributions of the solutions of the given SDDE system will converge to a probability distribution, known as  stationary distribution or invariant measure.
There is an intensive literature on ASE
(see, e.g.,  \cite{Khasminskii,Kolmanovskii-Nosov,Mao2007,Mohammed} and many others). The literature on ASD is  much less than ASE but has been growing quickly for the past 10 years  (see, e.g.,  \cite{bao-yin-yuan, Basak-Bhattacharya, Butkovsky, Cloez-Hairer,Kulik-Scheutzow, Xie-Zhang}).    The reason why there are  fewer papers on ASD than ASE is because the mathematics involved is much more complicated than that used for the study of ASE but certainly not because ASD is less important.
In fact,  it is inappropriate to study ASE for many SDDE systems in the real world  but more appropriate to study ASD.  For example,
for many population systems under random environment,
 the stochastic permanence
 is a more desired control objective than the extinction (see, e.g., \cite{BM04a,BM04b,HN18}).
In this situation it is useful to investigate whether or
not the probability distribution of the solutions will converge to
a probability distribution (i.e., ASD),  but not to zero (i.e.,
ASE) (see, e.g., \cite{HN18,M11,TDHS06}).
The two stability categories can also be illustrated by the control of Covid-19.
 There are essentially two
control strategies: one is to suppress infected to 0 but the
other is to live with Covid-19. The former is to stabilise
the infected to 0 with probability 1 (i.e.,  ASE), while the latter is
to stabilise the distribution of the infected to a stationary
distribution (i.e., ASD). More details on ASD and the related ergodic theory can be found in, e.g., \cite{Hairer2005, Hairer2008, Hairer-Mattingly-Scheutzow, hairer-ohashi, WangYa,mao-yuan2003}.

Although there is a theory on the existence and uniqueness of the invariant measure of the segment solution process to an SDDE, there is so far no way to obtain the theoretical cumulative probability distribution (CPD) of the invariant measure.  It is therefore significant to establish numerical methods to approximate the CPD.
For the past ten years,  several numerical schemes have been proposed  to approximate the CPD of the invariant measure of a stochastic differential equation (SDE) (see, e.g.,  \cite{Fang-Giles, Lamba-Mattingly-Sstuart, Li-Ma-Yang-Yuan, Li2018, Mattingly-Staurt-Higham, Mei-Yin2015, Pages-Panloup2012, ZhangX} and references therein), where the invariant measure is distributed on the finite-dimensional space $\RR^d$.  
However, the  invariant measure of  an SDDE is distributed on the infinite-dimensional space $C([-\tau,0;\RR^d)$.  It is hence much harder to approximate its CPD numerically. 
Nevertheless,  some progress has been made recently in this direction under the condition that the diffusion coefficient of the underlying infinite-dimensional system is globally Lipschitz continuous. We would like to mention \cite{C-H-S,H-W,H-W-Z}  for the study of stochastic partial differential equations and  \cite{bao-shao-yuan, NNN} for  SDDEs.
On the other hand, the global Lipschitz condition is very restrictive and most of SDDE models in applications do not satisfy it (see, e.g., \cite{BM04a,BM04b,DLMP08,HM10}).  It therefore becomes necessary and urgent  to design numerical methods to approximate the invariant measure of  an SDDE whose   coefficients are only locally Lipschitz continuous.  From the point of computational cost, it is more desired that the numerical methods are explicit.  This is our main aim in this paper.

Consider an SDDE described by
\begin{align}\la{n1.1}
\mathrm{d}x(t)=f(x(t),x(t-\tau))\mathrm{d}t +g(x(t),x(t-\tau))\mathrm{d}W(t),~~~t> 0
\end{align}
with an initial data $x(\theta)=\xi(\theta)$, $\theta\in [-\tau, 0]$,
where  $\tau>0$,  $f:  \RR^d \times \RR^d \rightarrow \RR^{d}$ and $g: \RR^d\times \RR^d  \rightarrow \RR^{d\times m}$ are Borel measurable, $W(t)$ is an m-dimensional Brownian motion on a probability space  $( \Omega,~\mathcal{F}, ~\PP )$ with a right-continuous complete filtration $\{{\mathcal{F}}_{t}\}_{t\geq 0}$, and $\xi$ is an $\mathcal F_0$-measurable, continuous function-valued random variable from
$[-\tau,~0]$ to $\mathbb{R}^{d}$.
Let $\{x_{t}\}_{t\geq 0} $ be the  segment process, where $x_{t}(\theta):=x(t+\theta)$ for $\theta\in[-\tau,0]$.
As we know, approximation of the invariant measure in the infinite horizon is quite  challenging for nonlinear SDDEs with non-globally Lipschitz  diffusion coefficient. We mainly  face the following difficulties:
\begin{itemize}
\item
Mathematically speaking, the time-homogenous Markov property  of  the  segment process of SDDEs plays a crucial role in investigating the ergodicity.
In the numerical case, how to construct a  continuous function-valued  explicit numerical  time-homogenous Markov process  $ \{Y_{t_k}^{\xi,\tr}\}_{k\geq 0}$ for approximating
the invariant measure?
\item Generally, the tightness of  measures  is often used to derive the existence of an invariant measure.
In the infinite-dimensional space, $\sup_{k\geq0}\E\|Y_{t_k}^{\xi,\tr}\|^2<\infty$   fails to  imply the tightness of correspding measures  $\{\mu_{t_k}^{\xi,\triangle}\}_{k\geq 0} $ since the relative compactness does not follows from the boundedness. The existence of numerical invariant measures therefore needs to be proved carefully.
\item
The super-linear diffusion coefficient makes it difficult to obtain the attraction of second moment of $ \{Y_{t_k}^{\xi,\tr}\}_{k\geq 0}$, i.e.,
$$\E\|Y_{t_k}^{\xi,\tr}-Y_{t_k}^{\zeta,\tr}\|^2\leq \phi (t_k)\E\|\xi-\zeta\|^2,$$
where $\phi : [0, \infty)\rightarrow \RR_+$  satisfies
 $\lim_{t\rightarrow \infty}\phi(t)=0$.
Hence we need to explore  the attraction in probability or in distribution and further discuss the uniqueness of numerical invariant measures.
\end{itemize}

To investigate the measure approximation of  nonlinear SDDEs with non-globally Lipschitz  diffusion coefficient,  we  and our coauthors  in \cite{Song-Li2021} first constructed an  explicit  truncated EM  method and proved  numerical solutions strongly convergent to the exact ones in the finite horizon.
Furthermore, in current paper we improve  the truncated EM method  given by \cite{Song-Li2021} and borrowing the linear interpolation go a further step to design  the continuous function-valued  numerical time-homogenous Markov process  $ \{Y_{t_k}^{\xi,\tr}\}_{k\geq 0} $ (see \eqref{n3.5}--\eqref{temlis} for details), called TEMSP.  By the strong convergence of numerical solutions ($\RR^d$-valued)  in \cite{Song-Li2021} we obtain the weak convergence of
$\{Y_{t_k}^{\xi,\tr}\}_{k\geq 0}$  (continuous function-valued) in the finite horizon (see Lemma \ref{noth2} for details).
Moreover, taking advantage of TEMSP, we yield the uniform boundedness  and the attraction of $ \{Y_{t_k}^{\xi,\tr}\}_{k\geq 0} $  in probability (see Proposition \ref{leerry} and  Proposition \ref{lemma5} for details). Making use of above propositions we reveal that the sequence of numerical measures $\{\mu_{t_k}^{\xi,\triangle}\}_{k\geq 0} $ is Cauchy in the Fortet-Mourier distance $d_{\Xi}$  defined by \eqref{n2.1}.  Together with the completeness
of $(\mathcal P(C), d_{\Xi})$ (see Page 4) we prove the existence of numerical invariant measure. Furthermore, the uniqueness of numerical invariant measures follows from the  attraction of $ \{Y_{t_k}^{\xi,\tr}\}_{k\geq 0} $  in probability. Finally,  utilizing weak convergence of
$\{Y_{t_k}^{\xi,\tr}\}_{k\geq 0}$  we prove the numerical invariant measure converges to the underlying one  in $d_{\Xi}$ as the step size tends to zero.

The rest of this paper is arranged as follows.    Section \ref{dcsNP}  introduces some notations and cites some used results.
  Section \ref{dcsMR}  proposes  the truncated EM linear interpolation scheme and states the
main results.
   Section \ref{proofs} gives the  proofs in details.  Section \ref{exper} gives an example and numerical simulations  to illustrate  our results. Finally,  Section \ref{summ}  concludes this paper.
\section{Preliminaries}\label{dcsNP}
In the beginning of this section, we introduce some notations.
Denote by $\lvert\cdot\rvert$ the Euclidean norm in $\RR^d$ and the trace
norm  in $\RR^{d\times m}$, and by $\langle \cdot,\cdot\rangle$ the inner product in $\RR^{d}$.
For real numbers $a$ and $b$,  let $a\vee{}b=\max\{a,b\}$ and $a\wedge{}b=\min\{a,b\}$, respectively.
Let $\lfloor a\rfloor$ be the integer part of the real
number $a$. Let  $\boldsymbol{1}_A(x)$ be the indicator function of the set $A$.
 Let $\mathbb{R}_{+}=[0,\infty)$. Denote by { $C:=C([-\tau,~0];~\mathbb{R}^{d})$}  the  family of continuous functions $X$ from
$[-\tau,~0]$ to $\mathbb{R}^{d}$ with the supremum norm $\|X\|
=\sup_{-\tau\leq\theta\leq0}\lvert X(\theta)\rvert$. For $M>0$, define $B(M):=\{X\in C: \|X\|\leq M\}$ and $B^c(M)$ is its complementary in $C$. 
For $p\geq2$,  denote by $\mathcal C_{\mathcal F_0}^{p}:=\mathcal C_{\mathcal F_0}^{p}([-\tau,~0];~ \RR^d)$  the  family of $\mathcal F_0$-measurable, $C$-valued random variables
such that  $\sup_{-\tau\leq\theta\leq0}\E{\lvert \eta(\theta)\rvert}^p<\infty$.
 For $M>0$ and $p\geq2$, define
$\mathbb{B}(M,p):=\{\xi\in\mathcal C_{\mathcal F_0}^{p} : \E\|\xi\|\leq M\}$ and $\mathbb{B}^{c}(M,p)$ is its complementary in $\mathcal C_{\mathcal F_0}^{p}$. 
Denote by $ \hat{ C}(\RR^d\times\RR^d;   ~\RR_+)$ the  family of  continuous functions from $\RR^d\times\RR^d$ to $\mathbb{R}_{+}$
satisfying $V(x,x)=0$ for all $x\in \RR^d$.
Throughout this work, $L$ denotes  a generic positive constant which may take different values at  different appearances.

Let $\mathfrak{B}(C)$ be the Borel algebra of $C$ and $\mathcal{P}(C):=\mathcal{P}(C, \mathfrak{B}(C))$ the family of  probability measures on $(C,\mathfrak{B}(C))$. Define the Fortet-Mourier distance $d_{\Xi}$   on  $\mathcal{P}(C)$ \cite[p.8.2]{Dudley} as below,
\begin{align}\la{n2.1}
d_{\Xi}(P_1,P_2)=\sup_{\Psi\in\Xi}\Big\lvert\int_{C}\Psi(X)P_1(\mathrm d X)-\int_{C}\Psi(X)P_2(\mathrm d X)\Big\rvert,~~~~\forall P_1, P_2\in \mathcal P(C),
\end{align}
where $\Xi$ is the test functional space
\begin{align}\la{n2.2}
\Xi:=\Big\{\Psi: C\rightarrow \RR ~\Big\lvert~\lvert\Psi(X_1)-\Psi(X_2)\rvert\leq \|X_1-X_2\|
 ~\hbox{and}~ \sup_{X\in C}\lvert\Psi(X)\rvert\leq 1\Big\}.
\end{align}
\begin{remark}
It is useful to point out that $(\mathcal P(C), d_{\Xi})$ is a complete metric space $($see \textup{\cite[Corollary 10.5]{Dudley}} for details$)$. In addition, the metric $d_{\Xi}$ is equivalent to the Wasserstein distance below
$$\mathbb{W}(P_1,P_2)=\inf_{\pi\in\Pi(P_1,P_2)}\int_{C\times C}(1\wedge\|X_1-X_2\|)\pi(\mathrm dX_1,\mathrm dX_2),$$
where $P_1,~P_2\in \mathcal{P}(C)$ and $\Pi(P_1,P_2)$ is their collection of couplings $($\textup{see \cite[Chapter 6]{Villani}} for details$)$.
\end{remark}
We impose the following  hypotheses.

\textbf{(H1) }For any  $R >0$, there exists a positive
constant $\ell_{R} $ such that
\begin{align*}
  \lvert f(x, y)-f(\bar{x},\bar  y)\rvert\vee \lvert g(x, y)-g(\bar{x}, \bar y)\rvert \leq{}
  \ell_{R} (\lvert x-\bar x\rvert+\lvert y-\bar y\rvert)
 \end{align*}
for any $x,~\bar{x},~y,~\bar y\in \RR ^d $ with  $\lvert x \rvert\vee\lvert\bar{x}\rvert\vee\lvert {y}\rvert\vee\lvert{\bar y}\rvert\leq R$.

\textbf{(H2)} There exist nonnegative constants $\alpha\geq 2$, $a_1, ~a_2,~a_3$ with $a_2>a_3$
 such that
\begin{align*}
\big\langle2x ,f(x,y)\big\rangle+\v g(x,y)\v ^2\leq{}a_1-a_2\v x\v^{\alpha}+a_3\v y\v^{\alpha}
\end{align*}
for any $ x,~y  \in\RR^{d}$.

 \textbf{(H3)} There exist  nonnegative constants { $b_1,~b_2,~b_3, ~b_4$ with $b_1>b_2,~b_3>b_4$, }and  a function $V(\cdot,\cdot)\in {\hat C}(\RR^d\times\RR^d,~\RR_{+})$ such that
\begin{align*}
&2\big\langle x-\bar x ,f(x,y)-f(\bar x,\bar y)\big\rangle+\lvert g(x,y)-g(\bar x,\bar y)\rvert ^2\nn\\
\leq &-b_1\lvert x-\bar x\vert^2+b_2\vert y-\bar y\vert ^2-b_3 V(x,\bar x)+b_4 V(y,\bar y)
\end{align*}
for any $x,~\bar x,~y,~\bar y\in\RR^d$.

It should be pointed out that under (H1) and (H2),  \textup{SDDE} \eqref{n1.1} with the initial data $\xi\in\mathcal C_{\mathcal F_0}^{\alpha}$ has a unique global solution $x^{\xi}(t)$ for $t\geq -\tau$  (see \cite[Theorem 2.4]{Mao2005} and \cite[p.278, Theorem 7.12]{Yuan-Mao-Hybrid} for details).
Let $\{x_{t}^{\xi}\}_{t\geq 0} $ be the corresponding  segment process, where $x_{t}^{\xi}(\theta):=x^{\xi}(t+\theta)$ for $\theta\in[-\tau,0]$.
For any  $\xi\in\mathcal C_{\mathcal F_0}^{\alpha}$ and $t\geq 0$, { denote by $\mu_t^{\xi}$ the probability measure generated by  $x_t^{\xi}$, namely,
\begin{align}
\mu_t^{\xi}(A)=\PP\big\{\omega\in\Omega: x_{t}^{\xi}\in A\big\},~~~\forall A\in\mathfrak{B}(C).
\end{align}}
Let us cite an ergodicity  result  \cite[Theorem 3.6]{WangYa} to close this section.
\begin{lemma}\la{nth1}\textup{\cite[Theorem 3.6]{WangYa}}
Suppose that \textup{(H1)}--\textup{(H3)} hold. Then
the segment process $\{x_{t}^{\xi}\}_{t\geq0}$ of  \eqref{n1.1}  with the initial data $\xi\in C$ is asymptotically stable in distribution and admits a unique invariant measure $\pi(\cdot)\in\mathcal{P}(C)$. Namely,
$$\lim_{t\rightarrow \infty}d_{\Xi}\big(\mu_t^{\xi}(\cdot),\pi(\cdot)\big)=0, ~~~\forall ~\xi\in C.$$
\end{lemma}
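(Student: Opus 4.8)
The plan is to prove this through a synchronous-coupling dissipativity argument combined with the completeness of $(\mathcal{P}(C),d_{\Xi})$ noted in the Remark, rather than through an explicit tightness/Krylov--Bogolyubov route (which is awkward in the infinite-dimensional setting where boundedness does not give relative compactness). First I would use (H1)--(H2) to obtain uniform-in-time moment bounds $\sup_{t\geq 0}\E|x^{\xi}(t)|^{\alpha}<\infty$ and $\sup_{t\geq 0}\E\|x_{t}^{\xi}\|^{2}<\infty$. Applying It\^o's formula to $|x^{\xi}(t)|^{\alpha}$, using the dissipativity of (H2) in which the gap $a_2>a_3$ dominates the delayed term, and adding an auxiliary running integral $\int_{t-\tau}^{t}|x^{\xi}(s)|^{\alpha}\,\mathrm{d}s$ to absorb the delay, yields a differential inequality of the form $\frac{\mathrm{d}}{\mathrm{d}t}\E[\cdots]\leq a_1-c\,\E|x^{\xi}(t)|^{\alpha}$ for some $c>0$, whence the uniform bound follows by comparison.

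The heart of the proof is an attraction property. I would couple two solutions $x^{\xi},x^{\zeta}$ by driving them with the same Brownian motion $W$ and set $e(t)=x^{\xi}(t)-x^{\zeta}(t)$. It\^o's formula for $|e(t)|^{2}$ together with (H3) gives
\[
\mathrm{d}|e(t)|^{2}\leq\big[-b_1|e(t)|^{2}+b_2|e(t-\tau)|^{2}-b_3 V(x^{\xi}(t),x^{\zeta}(t))+b_4 V(x^{\xi}(t-\tau),x^{\zeta}(t-\tau))\big]\mathrm{d}t+\mathrm{d}M(t),
\]
where $M$ is a local martingale. Introducing the Lyapunov functional
\[
U(t)=|e(t)|^{2}+b_2\int_{t-\tau}^{t}|e(s)|^{2}\,\mathrm{d}s+b_4\int_{t-\tau}^{t}V(x^{\xi}(s),x^{\zeta}(s))\,\mathrm{d}s
\]
telescopes the delayed contributions, and since $b_1>b_2$, $b_3>b_4$, taking expectations yields $\E U(t)+(b_1-b_2)\int_0^{t}\E|e(s)|^{2}\,\mathrm{d}s\leq\E U(0)<\infty$. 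Hence $\int_0^{\infty}\E|e(s)|^{2}\,\mathrm{d}s<\infty$, which forces $x_t^{\xi}-x_t^{\zeta}\to0$ in probability; by boundedness of $1\wedge\|\cdot\|$ and the Wasserstein form of $d_{\Xi}$ from the Remark, this delivers $d_{\Xi}(\mu_t^{\xi},\mu_t^{\zeta})\to0$, with a rate uniform over $\xi,\zeta$ ranging in balls of bounded moment.

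With attraction and the time-homogeneous Markov property of the segment process in hand, I would show $\{\mu_t^{\xi}\}_{t\geq0}$ is Cauchy in $d_{\Xi}$. By Chapman--Kolmogorov, $\mu_{t+s}^{\xi}=\int_C\mu_t^{\zeta}\,\mu_s^{\xi}(\mathrm{d}\zeta)$, so $d_{\Xi}(\mu_t^{\xi},\mu_{t+s}^{\xi})\leq\int_C d_{\Xi}(\mu_t^{\xi},\mu_t^{\zeta})\,\mu_s^{\xi}(\mathrm{d}\zeta)$; the attraction bound, together with the Step~1 moment control (which keeps the mass of $\mu_s^{\xi}$ concentrated on bounded balls uniformly in $s$, via Markov's inequality), makes the right-hand side tend to $0$ as $t\to\infty$ uniformly in $s$. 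Completeness of $(\mathcal{P}(C),d_{\Xi})$ then produces a limit $\pi^{\xi}$; attraction shows $\pi^{\xi}$ is independent of $\xi$, so denote it $\pi$. Passing to the limit in the semigroup identity shows $\pi$ is invariant, and if $\pi'$ were any invariant measure then $d_{\Xi}(\mu_t^{\xi},\pi')\leq\int_C d_{\Xi}(\mu_t^{\xi},\mu_t^{\zeta})\,\pi'(\mathrm{d}\zeta)\to0$, forcing $\pi'=\pi$; this simultaneously gives uniqueness and the stated asymptotic stability in distribution.

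The main obstacle is the attraction step. Because the diffusion is only locally (super-linearly) Lipschitz, one cannot hope for an exponential second-moment contraction $\E|e(t)|^{2}\leq Ce^{-\lambda t}\E|e(0)|^{2}$, and the finiteness of $\int_0^{\infty}\E|e(s)|^{2}\,\mathrm{d}s$ a priori only yields a subsequence along which $\E|e(t_n)|^{2}\to0$. Upgrading this to genuine attraction in probability (hence in distribution), and doing so with enough uniformity in the initial data to run the Cauchy argument, is the delicate point; condition (H3), whose $V$-correction is tailored to absorb the delayed super-linear terms, is exactly what makes the telescoping functional $U$ close. The secondary technical issue is that in $C$ boundedness does not imply relative compactness, which is precisely why I would route existence through the Cauchy completeness of $(\mathcal{P}(C),d_{\Xi})$ instead of through tightness.
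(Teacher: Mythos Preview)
The paper does not prove this lemma; it is quoted verbatim as \cite[Theorem 3.6]{WangYa}. So there is no in-paper proof to compare against. That said, your overall strategy---synchronous coupling plus (H3) for attraction, Chapman--Kolmogorov to get a Cauchy sequence, and completeness of $(\mathcal P(C),d_{\Xi})$ to produce the limit $\pi$---is exactly the architecture the paper itself uses to prove the \emph{numerical} analog (Theorem \ref{nth3}), so in spirit you are on the right track.

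There is, however, a concrete gap in your attraction step, and your diagnosis of it is slightly off. You stop at $\int_0^{\infty}\E|e(s)|^{2}\,\mathrm{d}s<\infty$ and then worry that one ``cannot hope for an exponential second-moment contraction''. In fact one can: apply It\^o's formula to $e^{\lambda t}|e(t)|^{2}$ (not just to $|e(t)|^{2}$), use (H3), and choose $\lambda>0$ small enough that $b_1-\lambda-b_2e^{\lambda\tau}>0$ and $b_3-b_4e^{\lambda\tau}>0$. The delayed terms telescope exactly as in your $U$-functional but now with an exponential weight, and one obtains $\E|e(t)|^{2}\leq C_M e^{-\lambda t}$ for $\xi,\zeta\in B(M)$. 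The constant $C_M$ depends on $M$ through both $\|\xi-\zeta\|^{2}$ and $\sup_{|x|\vee|y|\leq M}V(x,y)$, so it is not of the form $C\,\E|e(0)|^{2}$---that part of your caution is justified---but a bound uniform over balls is all the Cauchy argument needs. This is precisely what the paper does in Lemma \ref{4.4nle} for the discrete scheme.

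A second point you pass over: pointwise decay $\E|e(t)|^{2}\to0$ does not by itself give $\|x_t^{\xi}-x_t^{\zeta}\|\to0$ in probability, since the latter is a supremum over $[-\tau,0]$. One needs an additional modulus-of-continuity/equicontinuity estimate on the segment paths (or a BDG-type bound on $\sup_{\theta\in[-\tau,0]}|e(t+\theta)|$) to pass from the pointwise estimate to the $C$-norm. In the paper this is handled for the numerical process in Lemma \ref{le8s} and Proposition \ref{lemma5}; an analogous step is required here and should be stated.
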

\begin{remark}\la{re1}
For the initial data 
$\xi\in \mathcal C_{\mathcal F_0}^{\alpha}$,
{making use of the Chapman-Kolmogorov identity \textup{\cite[p.18-19]{Mohammed}},}  the result of \textup{Lemma \ref{nth1}} still holds.
\end{remark}

\section{Main results}\label{dcsMR}

In this section, we focus on constructing an appropriate explicit scheme and give the main results on the existence and convergence of the numerical invariant measure.
Due to (H1), we may choose a strictly increasing continuous function $\Phi : [1, \infty)\rightarrow \RR_+$ such that $\Phi(R)\rightarrow \infty$ as $R\rightarrow \infty$ and
 \begin{align}\la{n3.1}
 \sup_{\v x\v\vee\v \bar x\v \vee\v y\v\vee\v \bar y\v\leq R}\frac{\v f(x,y)-f(\bar x,\bar y)\v}{\v x-\bar x\v+1\wedge \v y-\bar y\v} + \frac{\v g(x,y)-g(\bar x,\bar y)\v ^2}{(\v x-\bar x \v+1\wedge \v y-\bar y\v)^2}\leq \Phi(R)
 \end{align}
 for any $R\geq1$.
Define a truncation mapping $\Gamma_{\Phi,\nu}^{\triangle}: \RR^d \rightarrow \RR^d$ by
\begin{align}\la{n3.2}
\Gamma_{\Phi,\nu}^{\triangle}(x)= \Big(\v x\v\wedge \Phi^{-1}\big(K\tr^{-\nu}\big)\Big) \frac{x}{\v x\v},
\end{align}
 where  $\Phi^{-1}$ is the inverse function of $\Phi$, $x/\v x\v=\mathbf{0}$ if $x=\mathbf{0}\in \mathbb{R}^d$, $\nu\in(0,1/3]$, and
 \begin{align}\la{constK}
K:=1\vee\Phi(1)\vee|f(\textbf 0,\textbf 0)|\vee|g(\textbf 0,\textbf 0)|^2.
\end{align}
We may suppose without loss of generality that  $\tr=\tau/N\in(0,1)$ with some integer $N>\tau$.
Let $t_{k}=k\tr$ for $k\geq -N$. Then for any $\xi\in \mathcal C_{\mathcal{F}_0}^{\alpha}$, we define the truncated EM scheme of \textup{SDDE} \eqref{n1.1} by
 \begin{align}\la{n3.5}
\left\{
\begin{array}{llll}
\breve u^{\xi,\tr}(t_{k})= \xi(t_k) ,  ~~~k=-N,\ldots,0,&\\
 u^{\xi,\tr}(t_{k}) =\Gamma_{\Phi,\nu}^{\triangle}(\br u^{\xi,\tr}(t_{k})),~~~k=-N,\dots,0, 1, \ldots,&\\
\breve u^{\xi,\tr}(t_{k+1})=u^{\xi,\tr}(t_k)+f(u^{\xi,\tr}(t_k),u^{\xi,\tr}(t_{k-N}))\tr&\\ ~~~~~~~~~~~~~~~~+g(u^{\xi,\tr}(t_k),u^{\xi,\tr}(t_{k-N}))\triangle W_k, ~~~k=0,1,\ldots,& \\
\end{array}
\right.
\end{align}
where  $\triangle W_k=W(t_{k+1})-W(t_{k})$.
Define a piecewise constant process  by
\begin{align}\la{n3.6}
u^{\xi,\tr}(t)=u^{\xi,\tr}(t_k),~~~t\in [t_k, t_{k+1}), ~k\geq -N,
\end{align}
and a piecewise linear continuous  process by
\begin{align}\la{n3.7}
\left\{
\begin{array}{ll}
y^{\xi,\tr}(t)=&\displaystyle \frac{t_{k+1}-t}{\tr} u^{\xi,\tr}(t_{k})+\frac{t-t_{k}}{\tr}u^{\xi,\tr}(t_{k+1}),~~~t\in[t_k,t_{k+1}],~k\geq0,\\
y^{\xi,\tr}(t)=&\Gamma_{\Phi,\nu}^{\triangle}(\xi(t)), ~~~t\in[-\tau,0].
\end{array}
\right.
\end{align}
For any $k\geq0$, let
\begin{align}\la{temlis}
Y_{t_k}^{\xi,\tr}(\theta)=y^{\xi,\tr}(t_k+\theta),~~~\hbox{for~}\theta\in[-\tau,0].
\end{align}
We call  $\{Y_{{t_k}}^{\xi,\tr}\}_{ k\geq 0}$   the  {\it truncated EM segment process (TEMSP)}.

\begin{remark}
In fact, Scheme \eqref{n3.5}  extended from the one in \textup{\cite{Song-Li2021}}. So  by virtue of \textup{\cite{Song-Li2021}} the results also hold for \eqref{n3.5}.  Precisely, under Assumptions  \textup{(H1)} and \textup{(H2)}, for any $\xi\in C$, $u^{\xi,\tr}(\cdot)$ defined by \eqref{n3.6} converges strongly  to the exact solution $x^{\xi}(\cdot)$  of  \eqref{n1.1} in any finite horizon, and reproduces the exponentially stability in infinite horizon when $a_1=0$, $\alpha=2$ given in Assumption \textup{(H2)}. More generally, for any initial data $\xi\in \mathcal C^{\alpha}_{\mathcal F_0}$, 
by  the  techniques in  \textup{\cite{Song-Li2021}} and \textup{Theorem \ref{nth3}} of this paper, these results still hold.
\end{remark}

It is well known that the Markov property  plays a crucial role in investigating the ergodicity.
Since $\{u^{\xi,\tr}(t)\}_{ t\geq-\tau}$ defined by \eqref{n3.6} is not Markovian, it fails to be used directly to approximate the underlying  invariant measure.
Using the similar argument as \cite[Lemma 5.1]{bao-shao-yuan}, we obtain the following result.
\begin{lemma}\la{lemma6}
Suppose  that \textup{(H1)} and \textup{(H2)} hold.
Then for any { $\xi\in C$} and $\tr\in(0,1)$,  TEMSP $\{Y_{t_k}^{\xi,\tr}\}_{k\geq 0}$ defined by \eqref{temlis} is a time-homogenous Markov process, that is, for any $A\in\mathfrak{B}(C)$, $0\leq i<k$, and $\eta\in C$,
$$\PP\big(Y_{t_k}^{\xi,\tr}\in A\v \mathcal{F}_{t_i}\big)=\PP\big(Y_{t_k}^{\xi,\tr}\in A\v Y_{t_i}^{\xi,\tr}\big),$$
$$\PP\big(Y_{t_k}^{\xi,\tr}\in A\v Y_{t_i}^{\xi,\tr}=\eta\big)=\PP\big(Y_{t_{k-i}}^{\eta,\tr}\in A\big).$$
\end{lemma}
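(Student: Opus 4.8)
The plan is to establish the two claimed identities by exploiting the explicit recursive structure of the scheme \eqref{n3.5}. The core observation is that the continuous interpolation $Y_{t_k}^{\xi,\tr}$ is a deterministic, measurable functional of two ingredients: the finite block of values $\{u^{\xi,\tr}(t_{k-N}),\ldots,u^{\xi,\tr}(t_k)\}$ that determine the segment $Y_{t_k}^{\xi,\tr}$ through the linear interpolation \eqref{n3.7}--\eqref{temlis}, and the future Brownian increments. First I would verify that the state $Y_{t_i}^{\xi,\tr}$, which by definition encodes precisely the values $y^{\xi,\tr}(t_i+\theta)$ for $\theta\in[-\tau,0]$, recovers (via $\Gamma_{\Phi,\nu}^{\triangle}$ and the piecewise-linear formula) exactly the truncated values $u^{\xi,\tr}(t_{i-N}),\ldots,u^{\xi,\tr}(t_i)$ needed to propagate the recursion. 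This is the key structural point: the segment is a \emph{complete} Markovian state for the delayed scheme, so no information prior to $t_{i-N}$ is needed to generate the future.

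Second, to prove the Markov property $\PP(Y_{t_k}^{\xi,\tr}\in A\,\v\,\mathcal{F}_{t_i})=\PP(Y_{t_k}^{\xi,\tr}\in A\,\v\,Y_{t_i}^{\xi,\tr})$, I would write $Y_{t_k}^{\xi,\tr}$ for $k>i$ as an explicit measurable map $F_{k-i}(Y_{t_i}^{\xi,\tr};\,\tr W_i,\ldots,\tr W_{k-1})$ built by iterating \eqref{n3.5}. Since the increments $\tr W_i,\ldots,\tr W_{k-1}$ are independent of $\mathcal{F}_{t_i}$, and $Y_{t_i}^{\xi,\tr}$ is $\mathcal{F}_{t_i}$-measurable, the conditional distribution of $Y_{t_k}^{\xi,\tr}$ given $\mathcal{F}_{t_i}$ depends on $\mathcal{F}_{t_i}$ only through $Y_{t_i}^{\xi,\tr}$. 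Concretely, for a bounded measurable $\Psi$ one computes $\E[\Psi(Y_{t_k}^{\xi,\tr})\v\mathcal{F}_{t_i}]=\E[\Psi(F_{k-i}(\eta;\,\tr W_i,\ldots,\tr W_{k-1}))]\big\v_{\eta=Y_{t_i}^{\xi,\tr}}$, using the freezing lemma for conditional expectations of independent-plus-measurable arguments. This yields the first identity.

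Third, for time-homogeneity $\PP(Y_{t_k}^{\xi,\tr}\in A\,\v\,Y_{t_i}^{\xi,\tr}=\eta)=\PP(Y_{t_{k-i}}^{\eta,\tr}\in A)$, I would observe that the map $F_{k-i}$ does not depend on the absolute index $i$ but only on the lag $k-i$, because the recursion coefficients $f,g$ and the truncation $\Gamma_{\Phi,\nu}^{\triangle}$ are time-independent and the step size $\tr$ is fixed. Moreover the increments $(\tr W_i,\ldots,\tr W_{k-1})$ have the same joint law as $(\tr W_0,\ldots,\tr W_{k-i-1})$ by stationarity of Brownian increments. Starting the scheme afresh from initial segment $\eta\in C$ (so that $Y_0^{\eta,\tr}=\eta$ after applying the truncation in \eqref{n3.7}) and running $k-i$ steps produces exactly $Y_{t_{k-i}}^{\eta,\tr}=F_{k-i}(\eta;\,\tr W_0,\ldots,\tr W_{k-i-1})$, whose law matches the frozen conditional law from the previous step.

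The main obstacle I anticipate is bookkeeping the interplay between the segment representation and the underlying $\RR^d$-valued recursion: one must confirm that $\eta=Y_{t_i}^{\xi,\tr}$ genuinely furnishes all and only the data required to launch the next step, i.e.\ that reconstructing $u^{\xi,\tr}(t_{i-N}),\ldots,u^{\xi,\tr}(t_i)$ from the interpolated segment $\eta$ is unambiguous and consistent with applying $\Gamma_{\Phi,\nu}^{\triangle}$ again. A subtle point is that the segment stores the \emph{interpolated} values $y^{\xi,\tr}$, which at grid points coincide with the truncated $u^{\xi,\tr}(t_j)$, so the grid values are read off directly and the delayed argument $u^{\xi,\tr}(t_{i-N})$ sits exactly at the left endpoint $\theta=-\tau$ of the segment; verifying this endpoint alignment and that re-truncation is idempotent on the recovered grid values is where care is needed. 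Since this mirrors \cite[Lemma 5.1]{bao-shao-yuan}, I would follow that template, adapting it to accommodate the truncation operator $\Gamma_{\Phi,\nu}^{\triangle}$.
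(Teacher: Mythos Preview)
Your proposal is correct and follows precisely the approach the paper indicates: the paper does not give a detailed proof but simply refers to \cite[Lemma 5.1]{bao-shao-yuan}, and your plan reproduces that standard argument (write the future segment as a measurable functional of the present segment and independent Brownian increments, then apply the freezing lemma and stationarity of increments). Your identification of the one genuine adaptation needed here---checking that re-applying $\Gamma_{\Phi,\nu}^{\triangle}$ to the grid values stored in $Y_{t_i}^{\xi,\tr}$ is idempotent, so that restarting the scheme from $\eta=Y_{t_i}^{\xi,\tr}$ reproduces the correct $u$-values---is exactly the extra bookkeeping the truncation introduces, and it goes through because the interpolated segment already lies in the truncation ball.
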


For any $\xi\in\mathcal C^{\alpha}_{\mathcal F_0}$,  $\tr\in(0,1)$, and $k\geq 0$, define
\begin{align}\la{n4.1}
\mu^{\xi,\tr}_{t_k}(A)=\PP\big\{\omega\in \Omega: Y_{t_k}^{\xi,\tr}\in A\big\},~~~\forall A\in\mathfrak{B}(C).
\end{align}
It is worth to point out that for any $M>0$,
there exists a $\tr^{*}_M\in(0,1)$ sufficiently small  such that
 \begin{align}\la{initialC}
M\leq\Phi^{-1}\big(K(\tr^{*}_M)^{-\nu}\big),
 \end{align}
which implies that  $\Gamma_{\Phi,\nu}^{\triangle }(x)=x$ for any $x\in\RR^d$ with $\v x\v\leq M$ and $\tr\in(0,\tr^{*}_M]$.
Clearly, in view of \textup{(H2)}, there exists a $\hat\tr_1\in(0,1)$ sufficiently small such that
\begin{align}\la{tr1}
a_2-6K^2 \hat\tr_1^{1-2\nu}>a_3.
\end{align}
In view of \textup{(H3)}, there exists a $\hat\tr_2\in(0,1)$ sufficiently small such that
\begin{align}\la{tr4}
b_1- 4K^2\hat\tr_2^{1-2\nu}>b_2.
\end{align}
In what follows, we  state our main results in this paper.
\begin{theorem}\la{nth3}
Suppose  that \textup{(H1)}--\textup{(H3)} hold.   Let $\hat\tr=\hat\tr_1\wedge\hat\tr_2$.
Then for any $\xi\in\mathcal C^{\alpha}_{\mathcal F_0}$ and $\tr\in(0,\hat\tr]$,   \textup{TEMSP} $\{Y_{t_k}^{\xi,\tr}\}_{k\geq 0}$  defined by \eqref{temlis} is asymptotically stable in distribution and  admits a unique numerical invariant measure $\pi^{\tr}(\cdot)\in \mathcal P(C)$ satisfying
\begin{align}\la{th3+1}
\lim_{t_k\rightarrow \infty}d_{\Xi}\big(\mu^{\xi,\tr}_{t_k}(\cdot),\pi^{\tr}(\cdot)\big)=0,~~~\hbox{uniformly~in
}~ \tr\in(0,\hat\tr].
\end{align}
Moreover, for any $ M>0$, this  convergence is also uniform for $\xi\in \mathbb{B}(  M,\alpha)$.
\end{theorem}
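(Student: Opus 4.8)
The plan is to sidestep the tightness route, which is unavailable here because boundedness in the infinite-dimensional space $C$ does not yield relative compactness, and instead to produce $\pi^{\tr}$ as the limit of a Cauchy sequence in the complete metric space $(\mathcal P(C),d_{\Xi})$. The three ingredients I would use are all uniform in $\tr\in(0,\hat\tr]$: the time-homogeneous Markov property (Lemma \ref{lemma6}), the uniform moment bound (Proposition \ref{leerry}), and the attraction in probability (Proposition \ref{lemma5}).

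First I would convert attraction in probability into attraction in distribution. Since every $\Psi\in\Xi$ is $1$-Lipschitz and bounded by $1$,
\[ d_{\Xi}(\mu_{t_k}^{\xi,\tr},\mu_{t_k}^{\zeta,\tr})\le\E\big[2\wedge\|Y_{t_k}^{\xi,\tr}-Y_{t_k}^{\zeta,\tr}\|\big], \]
and the bounded integrand lets me pass from $\|Y_{t_k}^{\xi,\tr}-Y_{t_k}^{\zeta,\tr}\|\to0$ in probability (Proposition \ref{lemma5}) to convergence of the right-hand side, by bounded convergence; tracking the constants, this limit is uniform in $\tr$ and uniform for $\xi,\zeta$ in a ball $\mathbb B(M,\alpha)$. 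For the Cauchy property I use the Chapman--Kolmogorov identity contained in Lemma \ref{lemma6}, namely $\mu_{t_{k+m}}^{\xi,\tr}=\int_C\mu_{t_k}^{\eta,\tr}\,\mu_{t_m}^{\xi,\tr}(\mathrm d\eta)$, which gives
\[ d_{\Xi}(\mu_{t_{k+m}}^{\xi,\tr},\mu_{t_k}^{\xi,\tr})\le\int_C d_{\Xi}(\mu_{t_k}^{\eta,\tr},\mu_{t_k}^{\xi,\tr})\,\mu_{t_m}^{\xi,\tr}(\mathrm d\eta). \]
Splitting the integral over $B(M)$ and $B^c(M)$, Proposition \ref{leerry} with Chebyshev's inequality makes $\sup_m\mu_{t_m}^{\xi,\tr}(B^c(M))$ as small as desired for $M$ large (uniformly in $\tr$), while on $B(M)$ the attraction in distribution above is small once $t_k$ is large; hence the right-hand side is $<\epsilon$ for all large $t_k$, uniformly in $m$. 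This is the Cauchy property, and completeness of $(\mathcal P(C),d_{\Xi})$ furnishes a limit $\pi^{\tr}$.

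To see $\pi^{\tr}$ is invariant I would pass to the limit $k\to\infty$ in the identity $\mu_{t_{k+1}}^{\xi,\tr}=\int_C\mu_{t_1}^{\eta,\tr}\,\mu_{t_k}^{\xi,\tr}(\mathrm d\eta)$. Its left-hand side converges to $\pi^{\tr}$; for the right-hand side I use that, for fixed $\tr$, the map $\eta\mapsto\E\Psi(Y_{t_1}^{\eta,\tr})$ is bounded and continuous, since the recursion \eqref{n3.5}--\eqref{temlis} depends continuously on the initial data (the coefficients are locally Lipschitz by (H1) and the truncation $\Gamma_{\Phi,\nu}^{\tr}$ is continuous), which is exactly the Feller property needed to push $d_{\Xi}$-convergence through the transition; uniqueness of limits then gives $\pi^{\tr}=\int_C\mu_{t_1}^{\eta,\tr}\,\pi^{\tr}(\mathrm d\eta)$. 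For the asymptotic stability \eqref{th3+1}, I note that $\pi^{\tr}$ inherits the uniform moment bound by Fatou, and apply the same split-integral argument to
\[ d_{\Xi}(\mu_{t_k}^{\xi,\tr},\pi^{\tr})\le\int_C d_{\Xi}(\mu_{t_k}^{\xi,\tr},\mu_{t_k}^{\eta,\tr})\,\pi^{\tr}(\mathrm d\eta); \]
keeping the estimates in terms of real time $t_k$ yields convergence uniform in $\tr\in(0,\hat\tr]$ and, for $\xi\in\mathbb B(M,\alpha)$, uniform in $\xi$. Uniqueness is then immediate: two invariant measures satisfy $d_{\Xi}(\pi_1^{\tr},\pi_2^{\tr})\le\int_C\int_C d_{\Xi}(\mu_{t_k}^{\eta,\tr},\mu_{t_k}^{\zeta,\tr})\,\pi_1^{\tr}(\mathrm d\eta)\,\pi_2^{\tr}(\mathrm d\zeta)\to0$.

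The main obstacle I anticipate is the uniformity bookkeeping rather than any single estimate. Both the tail mass $\sup_m\mu_{t_m}^{\xi,\tr}(B^c(M))$ and the attraction rate must be controlled by functions of the real time $t_k$ alone, independent of $\tr$ and of the position of $\xi$ within $\mathbb B(M,\alpha)$; in particular, turning the merely pointwise (in $\epsilon$) attraction in probability of Proposition \ref{lemma5} into a genuinely uniform attraction in distribution, and transferring the moment bound from the approximations to the limit measure $\pi^{\tr}$, are the delicate points.
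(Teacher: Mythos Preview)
Your proposal is correct and follows the same overall strategy as the paper: show $\{\mu_{t_k}^{\xi,\tr}\}_{k\ge0}$ is Cauchy in $(\mathcal P(C),d_{\Xi})$ via the Markov property, Proposition~\ref{leerry}, and Proposition~\ref{lemma5}, then invoke completeness. The organizational difference is that the paper first runs the entire Cauchy argument for the single deterministic initial datum $\xi=\mathbf 0$ (Step~1), so that Propositions~\ref{leerry} and~\ref{lemma5}, both stated only for deterministic $\xi\in B(M)$, apply without any conditioning; it then extends to random $\xi\in\mathcal C^{\alpha}_{\mathcal F_0}$ in a separate Step~2 by disintegrating over the law $P_\xi$ and using $\|\xi\|<\infty$ a.s.\ (or Markov's inequality for $\xi\in\mathbb B(M,\alpha)$) to kill the tail. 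Your outline attacks general $\xi$ from the start, which is fine but requires an extra layer of conditioning that you do not quite spell out: in your split integral you need $d_{\Xi}(\mu_{t_k}^{\eta,\tr},\mu_{t_k}^{\xi,\tr})$ small for $\eta\in B(M)$ deterministic while $\xi$ is random, and Proposition~\ref{lemma5} does not cover this mixed case directly; you must again write $\mu_{t_k}^{\xi,\tr}=\int_C\mu_{t_k}^{X,\tr}\,P_\xi(\mathrm dX)$ and split over a second ball. The paper's two-step structure avoids this nesting. Conversely, you go further than the paper on two points: you give an explicit Feller-property argument for the invariance of $\pi^{\tr}$ and a direct double-integral argument for uniqueness, whereas the paper's proof establishes only the convergence~\eqref{th3+1} for every $\xi$ and leaves invariance and uniqueness implicit.
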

\begin{theorem}\la{nth4}
Suppose that \textup{(H1)}--\textup{(H3)} hold. Then
$$\lim_{\tr\rightarrow 0}d_{\Xi}\big(\pi(\cdot),\pi^{\tr}(\cdot)\big)=0,$$
where $\pi(\cdot)$  and $\pi^{\tr}(\cdot)$ are the underlying invariant measure and the numerical one, respectively.
\end{theorem}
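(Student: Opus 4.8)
The plan is to bound $d_\Xi(\pi,\pi^\tr)$ by inserting the law of the exact segment process $\mu^\xi_t$ and the law of the TEMSP $\mu^{\xi,\tr}_{t_k}$ at a common, suitably large time, and then let $\tr\to0$. Since both $\pi$ and $\pi^\tr$ are the \emph{unique} invariant measures (Lemma \ref{nth1} and Theorem \ref{nth3}), neither depends on the starting point, so I may fix once and for all a convenient deterministic initial datum $\xi\in\mathcal C^{\alpha}_{\mathcal F_0}$ (e.g. a constant function). For any time level $t_k=k\tr$ the triangle inequality for $d_\Xi$ gives
\begin{align*}
d_\Xi(\pi,\pi^\tr)\le d_\Xi\big(\pi,\mu^{\xi}_{t_k}\big)+d_\Xi\big(\mu^{\xi}_{t_k},\mu^{\xi,\tr}_{t_k}\big)+d_\Xi\big(\mu^{\xi,\tr}_{t_k},\pi^\tr\big).
\end{align*}
The three terms will be made small by, respectively, the ergodicity of the exact process, the finite-horizon weak convergence of the TEMSP, and the ergodicity of the TEMSP.

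Fix $\e>0$. First I would use Lemma \ref{nth1} together with Remark \ref{re1} to pick $T_1>0$ with $d_\Xi(\pi,\mu^\xi_t)<\e/3$ for every $t\ge T_1$, and I would use Theorem \ref{nth3}, crucially its uniformity in $\tr\in(0,\hat\tr]$, to pick $T_2>0$ with $d_\Xi(\mu^{\xi,\tr}_{t_k},\pi^\tr)<\e/3$ for every $t_k\ge T_2$ and every $\tr\in(0,\hat\tr]$. Setting $T:=T_1\vee T_2$ and, for each $\tr\in(0,\hat\tr]$, choosing the grid index $k_\tr:=\lceil T/\tr\rceil$, the associated time satisfies $t_{k_\tr}\in[T,T+\tr)\subseteq[T,T+1)$; in particular $t_{k_\tr}\ge T$, so both the first and the third terms of the decomposition (evaluated at $t_k=t_{k_\tr}$) are already below $\e/3$, \emph{simultaneously for all} $\tr$. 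The remaining middle term $d_\Xi(\mu^{\xi}_{t_{k_\tr}},\mu^{\xi,\tr}_{t_{k_\tr}})$ lives on the bounded time window $[T,T+1)$, and here I would invoke the finite-horizon weak convergence of the TEMSP (Lemma \ref{noth2}): since $t_{k_\tr}$ stays in a compact set, $d_\Xi(\mu^{\xi}_{t_{k_\tr}},\mu^{\xi,\tr}_{t_{k_\tr}})\to0$ as $\tr\to0$, so it can be forced below $\e/3$ for all sufficiently small $\tr$. Combining the three bounds gives $d_\Xi(\pi,\pi^\tr)<\e$ for all small $\tr$, and since $\e$ is arbitrary the claim follows.

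The delicate point is the interchange of the two limits $t_k\to\infty$ and $\tr\to0$: for each fixed $\tr$ the numerical law $\mu^{\xi,\tr}_{t_k}$ only approaches $\pi^\tr$ as $t_k\to\infty$, while $\pi^\tr$ itself moves with $\tr$, so a naive diagonal argument fails. This is exactly where the uniformity in $\tr$ of the convergence in Theorem \ref{nth3} is indispensable: it lets me freeze a single threshold $T$ that controls the third term for every $\tr$, after which only the genuinely finite-horizon middle term is left to send to zero. A secondary technical nuisance is grid alignment, since the exact time $T$ is generally not a mesh point; I handle this by letting the mesh point $t_{k_\tr}$ float within the window $[T,T+1)$ and relying on the finite-horizon convergence being valid on the whole compact interval rather than at one fixed instant.
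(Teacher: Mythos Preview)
Your proposal is correct and follows essentially the same route as the paper: the triangle decomposition $d_\Xi(\pi,\pi^\tr)\le d_\Xi(\pi,\mu^{\xi}_{t_k})+d_\Xi(\mu^{\xi}_{t_k},\mu^{\xi,\tr}_{t_k})+d_\Xi(\mu^{\xi,\tr}_{t_k},\pi^\tr)$ with $\xi=\mathbf 0$, controlling the outer terms via Lemma~\ref{nth1} and Theorem~\ref{nth3} (using its uniformity in $\tr$), and the middle term via Lemma~\ref{noth2} on the bounded window $[T,T+1)$ obtained by choosing $k_0$ so that $T\le k_0\tr\le T+1$. Your diagnosis of the key point---that the uniformity in $\tr$ in Theorem~\ref{nth3} is what permits fixing a single threshold $T$ before sending $\tr\to0$---matches the paper's logic exactly.
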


\section{Proofs of theorems}\la{proofs}
Since the  proofs of Theorem \ref{nth3}--\ref{nth4}  are rather technical,  we prepare several notations and lemmas,  and then complete the proofs.
For any $\xi\in \mathcal C_{\mathcal{F}_0}^{\alpha}$, for short, we  write
\begin{align}\la{fg}
&F_t^{\xi,\tr}:=f(u^{\xi,\tr}(t),u^{\xi,\tr}(t-\tau)),
~G_{t}^{\xi,\tr}:=g(u^{\xi,\tr}(t),u^{\xi,\tr}(t-\tau)) ~~~\forall ~t\geq0,
\end{align}
where $u^{\xi,\tr}(t)$ is defined by \eqref{n3.6}.
It follows from  \eqref{n3.1}, \eqref{n3.2}, and \eqref{n3.5} that for any $\xi,\zeta\in \mathcal C_{\mathcal{F}_0}^{\alpha}$ and $t\geq0$,
\begin{align}\la{n3.3}
&\big\v F_{t}^{\xi,\tr}-F_{t}^{\zeta,\tr}\big\v\nn\\
\leq&\Phi\Big(\Phi^{-1}( K\tr^{-\nu})\Big)\Big(\v u^{\xi,\tr}(t)-u^{\zeta,\tr}(t)\v+1\wedge\v u^{\xi,\tr}(t-\tau)-u^{\zeta,\tr}(t-\tau)\v\Big)\nn\\
\leq& K\tr^{-\nu}\Big(\v u^{\xi,\tr}(t)-u^{\zeta,\tr}(t)\v+\v u^{\xi,\tr}(t-\tau)-u^{\zeta,\tr}(t-\tau)\v\Big),
\end{align}
This, along with \eqref{constK}, implies that
\begin{align}\la{n3.4}
\big\v F_{t}^{\xi,\tr}\big\v
\leq K\tr^{-\nu}\big(1+\v u^{\xi,\tr}(t)\v+\v u^{\xi,\tr}(t-\tau)\v\big).
\end{align}
Similarly,
\begin{align}\la{linearG}
\big\v G_{t}^{\xi,\tr}\big\v
\leq K^{\frac{1}{2}}\tr^{-\frac{\nu}{2}}\big(1+\v u^{\xi,\tr}(t)\v+\v u^{\xi,\tr}(t-\tau)\v\big).
\end{align}
For convenience, we define  an  auxiliary  process
\begin{align}\la{n3.8}
\left\{
\begin{array}{lll}
z^{\xi,\tr}(t)=&\displaystyle u^{\xi,\tr}(t_k)+\int_{t_k}^t F^{\xi,\tr}_s\mathrm ds
+\int_{t_k}^t G^{\xi,\tr}_s\mathrm dW(s),~~~\forall~t\in[t_k,t_{k+1}),\\
z^{\xi,\tr}(t)=&\Gamma_{\Phi,\nu}^{\triangle}(\xi(t)),~~~\forall~t\in[-\tau,0].
\end{array}
\right.
\end{align}
To show the uniform boundedness of the norm of TEMSP $\{Y_{t_k}^{\xi,\tr}\}_{k\geq 0}$ in probability we begin with the moment analysis of the numerical solutions.
\begin{lemma}\la{nle1}
Suppose that $(\textup{H}1)$ and $(\textup{H}2)$ hold.
Then for any $M>0$,
\begin{align}\la{n3.9}
\sup_{\tr\in(0, \hat\tr_1]}\sup_{k\geq -N}\sup_{\xi\in B(M)}\E \v u^{\xi,\tr}(t_k)\v ^2\leq L_1,
\end{align}
where $L_1$ is a constant dependent  on $\hat\tr_1$ and $M$.
Moreover, there exists a constant $\bar a=\bar a(\hat\tr_1)\in(0,1]$ such that for any $\tr\in(0,\hat\tr_1]$, $k\geq0$, and $\xi\in B(M)$,
\begin{align}\la{n3.10}
\tr\sum_{i=0}^{k}e^{\bar at_{i+1}}\E\v u^{\xi,\tr}(t_{i})\v ^{\alpha}
\leq L_2(1+e^{\bar at_{k+1}}),
\end{align}
where  $L_2$ is a constant dependent on $\hat\tr_1$ and $M$.
\end{lemma}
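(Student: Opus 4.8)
The plan is to derive a one-step recursion for the second moment of the truncated values $u^{\xi,\tr}(t_k)$, and then to propagate it with an exponential weight so that both \eqref{n3.9} and \eqref{n3.10} drop out of a single telescoping estimate. First I would square the last line of \eqref{n3.5}, writing $\br u^{\xi,\tr}(t_{k+1})=u^{\xi,\tr}(t_k)+F_{t_k}^{\xi,\tr}\tr+G_{t_k}^{\xi,\tr}\tr W_k$ and expanding $\v\br u^{\xi,\tr}(t_{k+1})\v^2$. Taking expectations and conditioning on $\mathcal F_{t_k}$ kills the two cross terms containing $\tr W_k$ (independent, mean zero) and turns $\E\v G_{t_k}^{\xi,\tr}\tr W_k\v^2$ into $\tr\,\E\v G_{t_k}^{\xi,\tr}\v^2$, leaving
\begin{align*}
\E\v\br u^{\xi,\tr}(t_{k+1})\v^2\leq \E\v u^{\xi,\tr}(t_k)\v^2+\tr\,\E\big[2\langle u^{\xi,\tr}(t_k),F_{t_k}^{\xi,\tr}\rangle+\v G_{t_k}^{\xi,\tr}\v^2\big]+\tr^2\,\E\v F_{t_k}^{\xi,\tr}\v^2.
\end{align*}
Applying \textup{(H2)} with $x=u^{\xi,\tr}(t_k)$, $y=u^{\xi,\tr}(t_{k-N})$ to the bracket and controlling the last term by \eqref{n3.4} together with the elementary inequality $\v u\v^2\leq 1+\v u\v^{\a}$ (valid since $\a\geq2$), the coefficient of $\E\v u^{\xi,\tr}(t_k)\v^{\a}$ becomes $-\tr(a_2-3K^2\tr^{1-2\nu})=:-\tr c_1$ and that of the delayed term $\E\v u^{\xi,\tr}(t_{k-N})\v^{\a}$ becomes $\tr(a_3+3K^2\tr^{1-2\nu})=:\tr c_2$. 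Since $\nu\leq1/3$ forces $\tr^{1-2\nu}\leq\hat\tr_1^{1-2\nu}$, condition \eqref{tr1} guarantees $c_1>c_2$; the factor $6K^2$ there is precisely $3K^2$ arising from each of the two $\a$-moments. Using the non-expansiveness $\v\Gamma_{\Phi,\nu}^{\triangle}(x)\v\leq\v x\v$ then converts this into a recursion for $P_k:=\E\v u^{\xi,\tr}(t_k)\v^2$, with $Q_k:=\E\v u^{\xi,\tr}(t_k)\v^{\a}$.

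Next I would introduce the weight $e^{\bar a t_{k+1}}$. Using $e^{\bar a\tr}\leq1+2\bar a\tr$ (valid as $\bar a\tr<1$), the telescoping term $e^{\bar a t_k}P_k$ picks up an extra $2\bar a\tr\,e^{\bar a t_{k+1}}P_k$, which I absorb through $P_k\leq Q_k+1$ into the negative $\a$-moment term, replacing $c_1$ by $c_1-2\bar a$. Summing over $k=0,\dots,n-1$, the $P$-terms telescope to $e^{\bar a t_n}P_n-P_0$, the constant contributions sum to at most a fixed multiple $C_3 e^{\bar a t_{n+1}}$ (using $\tr\sum_k e^{\bar a t_{k+1}}\leq e^{\bar a t_{n+1}}/\bar a$), and in the delayed $\a$-moment sum the index shift $j=k-N$ produces a factor $e^{\bar a\tau}$ because $t_{k+1}=t_{j+1}+\tau$. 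The initial segment of that shifted sum is bounded using $Q_j\leq M^{\a}$ for $-N\leq j\leq0$, since $\v u^{\xi,\tr}(t_j)\v\leq\v\xi(t_j)\v\leq M$ when $\xi\in B(M)$.

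I then fix $\bar a=\bar a(\hat\tr_1)\in(0,1]$ small enough that $a_2-3K^2\hat\tr_1^{1-2\nu}-2\bar a>(a_3+3K^2\hat\tr_1^{1-2\nu})e^{\bar a\tau}$; this is possible because at $\bar a=0$ the inequality is exactly \eqref{tr1} and both sides depend continuously on $\bar a$. With this choice the negative weighted $\a$-moment sum strictly dominates the delayed one, their difference being a nonnegative multiple $\delta_0\tr\sum_{k=0}^{n-1}e^{\bar a t_{k+1}}Q_k$ that can be moved to the left. Discarding it gives $e^{\bar a t_n}P_n\leq M^2+c_2 e^{\bar a\tau}\tau M^{\a}+C_3 e^{\bar a t_{n+1}}$, and dividing by $e^{\bar a t_n}$ (with $\tr<1$, $\bar a\leq1$, so $e^{\bar a t_{n+1}}\leq e\,e^{\bar a t_n}$) yields the uniform bound \eqref{n3.9}. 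Keeping instead $\delta_0\tr\sum_{k=0}^{n-1}e^{\bar a t_{k+1}}Q_k$ on the left and bounding the right-hand side by $L_2(1+e^{\bar a t_{k+1}})$ yields \eqref{n3.10}. The main obstacle is the super-linearity of $f$: the remainder $\tr^2\,\E\v F_{t_k}^{\xi,\tr}\v^2$ is genuinely dangerous, and only the interplay of the truncation bound \eqref{n3.4} (which costs $\tr^{-2\nu}$) with the extra factor $\tr^2$, producing the small quantity $\tr^{1-2\nu}$, allows the dissipativity $-a_2\v x\v^{\a}$ of \textup{(H2)} to survive; making it survive simultaneously against the truncation correction, the delay term and the exponential-weight correction is exactly what \eqref{tr1} and the smallness of $\bar a$ are engineered to guarantee.
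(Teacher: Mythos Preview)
Your proposal is correct and follows essentially the same approach as the paper: expand $\v\br u^{\xi,\tr}(t_{k+1})\v^2$, apply \textup{(H2)} and the truncation bound \eqref{n3.4} to obtain a one-step recursion with coefficients $-(a_2-3K^2\tr^{1-2\nu})$ and $a_3+3K^2\tr^{1-2\nu}$, multiply by an exponential weight, telescope, shift the delayed sum by $\tau$, and choose $\bar a$ small via \eqref{tr1}. The only cosmetic difference is that the paper handles the weight via $e^{\bar a t_{i+1}}\leq e^{\bar a t_i}+\bar a\tr\,e^{\bar a t_{i+1}}$ (mean value theorem), giving a correction of $\bar a$ rather than your $2\bar a$ in the dissipation coefficient; both lead to the same conclusion.
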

\begin{proof}
Fix an $M>0$ and let $\xi\in B(M)$.
For any integer $i\geq0$,
it follows from (H2), \eqref{n3.5} and \eqref{n3.4}  that
\begin{align}\la{n3.12}
&\E\big(\v u^{\xi,\tr}(t_{i+1})\v^2\v\mathcal{F}_{t_i}\big)\leq\E\big(\v\br u^{\xi,\tr}(t_{i+1})\v^2\v\mathcal{F}_{t_i}\big)\nn\\
=&\E\big(\v u^{\xi,\tr}(t_{i})+F^{\xi,\tr}_{t_i}\tr+G^{\xi,\tr}_{t_i}\tr W_{i}\v^2\v\mathcal{F}_{t_i}\big)\nn\\
=&\v u^{\xi,\tr}(t_{i})\v^2+2\langle u^{\xi,\tr}(t_i), F^{\xi,\tr}_{t_i}\rangle\tr+\v G^{\xi,\tr}_{t_i}\v^2\tr+\v F^{\xi,\tr}_{t_i}\v^2\tr^2\nn\\
\leq &\v u^{\xi,\tr}(t_{i})\v^2+a_1\tr-a_2\v u^{\xi,\tr}(t_i)\v^{\alpha}\tr
+a_3\v u^{\xi,\tr}(t_{i-N})\v^{\alpha}\tr\nn\\
&+K^2(1+\v u^{\xi,\tr}(t_i)\v+\v u^{\xi,\tr}(t_{i-N})\v)^2\tr^{2-2\nu}\nn\\
\leq&\v u^{\xi,\tr}(t_{i})\v^2+(a_1+9K^2\tr^{1-2\nu})\tr
-(a_2-3K^2\tr^{1-2\nu})\v u^{\xi,\tr}(t_i)\v^{\alpha}\tr\nn\\
&+(a_3+3K^2\tr^{1-2\nu})\v u^{\xi,\tr}(t_{i-N})\v^{\alpha}\tr,
\end{align}
where  the last inequality   uses $\v u^{\xi,\tr}(t_i)\v^2\leq 1+\v u^{\xi,\tr}(t_i)\v^{\alpha}$.
An application of Lagrange's mean value theorem derives that for any $a>0$, there exists a $ \varsigma\in(t_i,t_{i+1})$ such that
$ e^{at_{i+1}}-e^{at_i}=e^{a\varsigma}a\tr$, which implies
$e^{at_{i+1}}\leq e^{at_{i}}+ e^{at_{i+1}}a\tr$.
Taking expectations in \eqref{n3.12} and using the above inequality and $\tr\in(0,1)$, $\nu\in(0,1/3]$, we arrive at
\begin{align*}
&e^{at_{i+1}}\E\v u^{\xi,\tr}(t_{i+1})\v^2\leq e^{at_{i+1}}\E\Big(\E\big(\v\br u^{\xi,\tr}(t_{i+1})\v^2\v\mathcal{F}_{t_i}\big)\Big)\nn\\
\leq&(e^{at_{i}}+e^{at_{i+1}}a\tr)\E\v u^{\xi,\tr}(t_i)\v^2+e^{at_{i+1}}(a_1+9K^2)\tr\nn\\
&-(a_2-3K^2\tr^{1-2\nu})e^{at_{i+1}}\E\v u^{\xi,\tr}(t_i)\v^{\alpha}\tr\nn\\
&+(a_3+3K^2\tr^{1-2\nu})e^{at_{i+1}}\E \v u^{\xi,\tr}(t_{i-N})\v^{\alpha}\tr\nn\\
\leq&e^{at_{i}}\E\v u^{\xi,\tr}(t_i)\v^2+e^{at_{i+1}}(a_1+9K^2+a)\tr\nn\\
&-(a_2-3K^2\tr^{1-2\nu}-a)
e^{at_{i+1}}\E\v u^{\xi,\tr}(t_i)\v^{\alpha}\tr\nn\\
&+(a_3+3K^2\tr^{1-2\nu})e^{at_{i+1}}\E \v u^{\xi,\tr}(t_{i-N})\v^{\alpha}\tr.
\end{align*}
Summing the above inequality on both sides from $0$ to $k$ derives
\begin{align}\label{eq+1}
&e^{at_{k+1}}\E\v u^{\xi,\tr}(t_{k+1})\v^2\nn\\
\leq&\v u^{\xi,\tr}(0)\v^2+(a_1+9K^2+a)\tr\sum_{i=0}^{k}e^{at_{i+1}}\nn\\
&-(a_2-3K^2\tr^{1-2\nu}-a)\tr\sum_{i=0}^{k}e^{at_{i+1}}\E \v u^{\xi,\tr}(t_i)\v^{\alpha}\nn\\
&+(a_3+3K^2\tr^{1-2\nu})\tr\sum_{i=0}^{k}e^{at_{i+1}}\E\v u^{\xi,\tr}(t_{i-N})\v^{\alpha}\nn\\
\leq& \|\xi\|^2+(a_1+9K^2+a)\tr\frac{e^{ a t_{k+2}}\nn-e^{a\tr}}{e^{ a\tr}-1}\nn\\
&-(a_2-3 K^2\tr^{1-2\nu}-a)\tr\sum_{i=0}^{k}e^{at_{i+1}}\E \v u^{\xi,\tr}(t_i)\v^{\alpha}+(a_3+3K^2\tr^{1-2\nu})\tau e^{a\tau}\|\xi\|^{\alpha}\nn\\
&+(a_3+3K^2\tr^{1-2\nu})e^{a\tau}\tr\sum_{i=0}^{k}e^{at_{i+1}}\E\v u^{\xi,\tr}(t_{i})\v^{\alpha}.
 \end{align}
By virtue of \eqref{tr1} we further  choose an $\bar a \in(0,1]$ sufficiently small  such that
\begin{align}\la{3.15nn}
c :=a_2-3K^2\hat\tr_1^{1-2\nu}-\bar a-(a_3+3K^2\hat\tr_1^{1-2\nu}) e^{\bar a\tau}>0.
\end{align}
Taking $a=\bar a $ in \eqref{eq+1}   yields that for any  $\tr\in(0,\hat\tr_1]$
\begin{align}\la{n3.13}
e^{\bar at_{k+1}}\E\v u^{\xi,\tr}(t_{k+1}\v^2
\leq & \|\xi\|^{2}+(a_3+3K^2)\tau e^{\bar a \tau}\|\xi\|^{\alpha}+(a_1+9K^2+\bar a)\frac{e^{\bar at_{k+2}}}{\bar a}\nn\\
&-c \tr\sum_{i=0}^{k}e^{\bar at_{i+1}}\E \v u^{\xi,\tr}(t_{i})\v^{\alpha}.
\end{align} 
A direct computation derives
\begin{align*}
  \E\v u^{\xi,\tr}(t_{k+1})\v^2
\leq & M^{2} +(a_3+3K^2)\tau e^{\bar a \tau}M^{\alpha} +(a_1+9K^2+\bar a)\frac{e^{ \bar a\triangle}}{\bar a},
\end{align*}
which implies that  \eqref{n3.9} holds. Moreover, the other desired assertion \eqref{n3.10} follows from \eqref{n3.13} directly.
\end{proof}

\begin{lemma}\la{nle2}
Suppose that \textup{(H1)} and \textup{(H2)} hold. Then  for any $\tr\in(0,\hat\tr_1]$ and $M>0$,
\begin{align}\la{n3.15}
\sup_{t\geq0}\sup_{\xi\in B(M)}\E\big\v z^{\xi,\tr}(t)-u^{\xi,\tr}(t)\big\v^2\leq L_3\tr^{1-\nu},
\end{align}
where $L_3$ is a constant dependent on $\hat\tr_1$ and $M$.
\end{lemma}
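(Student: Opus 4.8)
The plan is to exploit that on each subinterval the difference $z^{\xi,\tr}(t)-u^{\xi,\tr}(t)$ is simply a short-time It\^o increment of the auxiliary process, and to control its second moment \emph{uniformly} in $t$ and $\xi$ by feeding the uniform moment bound of Lemma \ref{nle1} into the linear growth estimates \eqref{n3.4} and \eqref{linearG}. Fix $M>0$ and $\xi\in B(M)$, and take $t\in[t_k,t_{k+1})$ for some $k\geq0$. By \eqref{n3.8} and \eqref{n3.6},
$$z^{\xi,\tr}(t)-u^{\xi,\tr}(t)=\int_{t_k}^t F^{\xi,\tr}_s\,\mathrm ds+\int_{t_k}^t G^{\xi,\tr}_s\,\mathrm dW(s),$$
and, because $u^{\xi,\tr}$ is piecewise constant, both integrands are constant on $[t_k,t_{k+1})$, equal respectively to $F^{\xi,\tr}_{t_k}$ and $G^{\xi,\tr}_{t_k}$. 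Applying $\v a+b\v^2\leq2\v a\v^2+2\v b\v^2$ then splits the estimate into a drift part and a diffusion part.

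For the drift part, the integral equals $(t-t_k)F^{\xi,\tr}_{t_k}$, so its second moment is at most $\tr^2\E\v F^{\xi,\tr}_{t_k}\v^2$. By \eqref{n3.4} together with $\v a+b+c\v^2\leq3(\v a\v^2+\v b\v^2+\v c\v^2)$ this is bounded by $3K^2\tr^{2-2\nu}\big(1+\E\v u^{\xi,\tr}(t_k)\v^2+\E\v u^{\xi,\tr}(t_{k-N})\v^2\big)$, and Lemma \ref{nle1} caps the bracket by $1+2L_1$, yielding a contribution of order $\tr^{2-2\nu}$. For the diffusion part I would use the It\^o isometry: its second moment is $(t-t_k)\E\v G^{\xi,\tr}_{t_k}\v^2\leq\tr\,\E\v G^{\xi,\tr}_{t_k}\v^2$, and invoking \eqref{linearG} and Lemma \ref{nle1} in the same manner gives a bound of order $\tr^{1-\nu}$. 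Since $\nu\in(0,1/3]$ forces $2-2\nu\geq1-\nu$, on $\tr\in(0,1)$ the drift contribution is dominated by the diffusion one, so both can be absorbed into $L_3\tr^{1-\nu}$ with $L_3$ depending only on $K$ and $L_1$ (hence on $\hat\tr_1$ and $M$) but not on $t$ or $\xi$.

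The only delicate point is the uniformity asserted in the supremum over all $t\geq0$ and all $\xi\in B(M)$: it hinges entirely on Lemma \ref{nle1} supplying a moment bound $L_1$ that is itself uniform in the time index $k$ and in $\xi\in B(M)$, so that the per-step estimates never accumulate in $t$. The effect to watch is that the truncated coefficients carry the blow-up factors $\tr^{-\nu}$ and $\tr^{-\nu/2}$ arising from the truncation radius $\Phi^{-1}\big(K\tr^{-\nu}\big)$; the reason the estimate nevertheless tends to zero is that the short-time factors $\tr$ (diffusion) and $\tr^2$ (drift) over-compensate, and the constraint $\nu\leq1/3$ keeps the net exponent $1-\nu$ strictly positive. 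I do not expect a genuine obstacle beyond bookkeeping the constants, since the substantive analytic work is already carried by Lemma \ref{nle1}.
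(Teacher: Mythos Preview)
Your proposal is correct and follows essentially the same route as the paper: split $z^{\xi,\tr}(t)-u^{\xi,\tr}(t)$ into its drift and diffusion increments on $[t_k,t_{k+1})$, bound them via \eqref{n3.4}, \eqref{linearG}, and the uniform moment bound of Lemma \ref{nle1}, and absorb the $\tr^{2-2\nu}$ drift term into the dominant $\tr^{1-\nu}$ diffusion term. One small remark: the inequality $2-2\nu\geq1-\nu$ and the positivity of $1-\nu$ require only $\nu<1$, not the stronger $\nu\leq1/3$ (that constraint is used elsewhere in the paper, e.g.\ in Lemma \ref{nle3}).
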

\begin{proof}
Fix an $M>0$.
For any $\xi\in B(M)$ and $t\in[t_k,t_{k+1})$ with $k\geq 0$, using  \eqref{n3.4}--\eqref{n3.8}, and \eqref{n3.9}, we derive
\begin{align*}
 &\E\big\v z^{\xi,\tr}(t)-u^{\xi,\tr}(t)\big\v^2\nn\\
\leq &2\E\big\v F^{\xi,\tr}_{t_k}\big\v^2\tr^2+2\E\big(\v G^{\xi,\tr}_{t_k}\v^2\v W(t)-W(t_k)\v^2\big)\nn\\
\leq&2K^2\E\big(1+\v u^{\xi,\tr}(t_k)\v+\v u^{\xi,\tr}(t_{k-N})\v\big)^2\tr^{2-2\nu}\nn\\
&~~+2K\E\big(1+\v u^{\xi,\tr}(t_k)\v+\v u^{\xi,\tr}(t_{k-N})\v\big)^2\tr^{1-\nu}\nn\\
\leq&6K(K +1)\big(1+\E\v u^{\xi,\tr}(t_k)\v^2+\E\v u^{\xi,\tr}(t_{k-N})\v^2\big)\tr^{1-\nu}.\end{align*} 
Then the desired assertion follows.
\end{proof}
\begin{lemma}\la{nle3}
Suppose that \textup{(H1)} and \textup{(H2)} hold.
Then for any $M>0$,  $\e>0$, $T>0$,   there exists a $\tr_3=\tr_3(M,\e, T\in(0,\hat\tr_{1}\wedge \tr^*_M]$ such that
  \begin{align}\la{n3.16}
 \sup_{\tr\in(0, \tr_3]}\sup_{k\geq -N}\sup_{\xi\in B(M)} \PP\Big\{\sup_{s\in[t_k,t_k+T]}\v z^{\xi,\tr}(s)\v>\Phi^{-1}(K\tr_{3}^{-\nu})\Big\}<\varepsilon.
   \end{align}
\end{lemma}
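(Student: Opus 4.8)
The plan is to reduce \eqref{n3.16} to a single uniform second-moment bound on the running maximum of the auxiliary process $z^{\xi,\tr}$ and then to exploit that the truncation level $\Phi^{-1}(K\tr_3^{-\nu})$ blows up as $\tr_3\to0$. Concretely, I would first establish the key estimate
\[
\sup_{\tr\in(0,\hat\tr_1]}\ \sup_{k\geq-N}\ \sup_{\xi\in B(M)}\ \E\sup_{s\in[t_k,t_k+T]}|z^{\xi,\tr}(s)|^2\le C(M,T)
\]
for a constant $C(M,T)$ independent of $\tr$, $k$ and $\xi$. Granting this, Chebyshev's inequality gives $\PP\{\sup_{s\in[t_k,t_k+T]}|z^{\xi,\tr}(s)|>\Phi^{-1}(K\tr_3^{-\nu})\}\le C(M,T)\big(\Phi^{-1}(K\tr_3^{-\nu})\big)^{-2}$. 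Since $\Phi$ is strictly increasing with $\Phi(R)\to\infty$ and $K\ge\Phi(1)$ by \eqref{constK}, the quantity $\Phi^{-1}(K\tr_3^{-\nu})$ is well defined and tends to $\infty$ as $\tr_3\to0$; I would then pick $\tr_3\in(0,\hat\tr_1\wedge\tr^*_M]$ so small that the right-hand side is $<\e$. Because $C(M,T)$ is uniform over $\tr\in(0,\hat\tr_1]$ and the threshold $\Phi^{-1}(K\tr_3^{-\nu})$ is fixed once $\tr_3$ is chosen, the resulting bound is uniform over $\tr\in(0,\tr_3]$, $k\geq-N$ and $\xi\in B(M)$, which is exactly \eqref{n3.16}.

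The whole difficulty is the displayed moment bound, and the naive route fails: bounding $|F^{\xi,\tr}_r|$ and $|G^{\xi,\tr}_r|$ by the truncated growth estimates \eqref{n3.4}, \eqref{linearG} carries factors $\tr^{-\nu}$, and a union/sum estimate over the $\approx T/\tr$ sub-intervals produces constants that diverge as $\tr\to0$. Instead I would apply It\^o's formula to $|z^{\xi,\tr}(s)|^2$ on each $[t_j,t_{j+1})$ and sum; since $u^{\xi,\tr}(t_j)=\Gamma_{\Phi,\nu}^{\triangle}(\br u^{\xi,\tr}(t_j))$ never increases the norm, the truncation contributes only downward jumps in $|z^{\xi,\tr}|^2$ and may be discarded. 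Writing $\mathcal M(s):=\int_{t_k}^s 2\langle z^{\xi,\tr}(r),G_r^{\xi,\tr}\rangle\,\mathrm dW(r)$ this yields
\[
|z^{\xi,\tr}(s)|^2\le |u^{\xi,\tr}(t_k)|^2+\int_{t_k}^s\big(2\langle z^{\xi,\tr}(r),F_r^{\xi,\tr}\rangle+|G_r^{\xi,\tr}|^2\big)\,\mathrm dr+\mathcal M(s).
\]
The essential point is to estimate the drift through the dissipativity of \textup{(H2)} rather than through \eqref{n3.4}--\eqref{linearG}: splitting $2\langle z^{\xi,\tr},F^{\xi,\tr}\rangle=2\langle u^{\xi,\tr},F^{\xi,\tr}\rangle+2\langle z^{\xi,\tr}-u^{\xi,\tr},F^{\xi,\tr}\rangle$ and applying \textup{(H2)} with $x=u^{\xi,\tr}(r)$, $y=u^{\xi,\tr}(r-\tau)$ gives $2\langle u^{\xi,\tr}(r),F_r^{\xi,\tr}\rangle+|G_r^{\xi,\tr}|^2\le a_1-a_2|u^{\xi,\tr}(r)|^{\alpha}+a_3|u^{\xi,\tr}(r-\tau)|^{\alpha}$, so the leading part of the drift integrates to an $O(a_1T)$ term plus a controlled multiple of $\tr\sum_i\E|u^{\xi,\tr}(t_i)|^{\alpha}$ over the window.

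To close the estimate I would take $\sup_s$ and expectations, control the martingale by Doob/Burkholder--Davis--Gundy, $\E\sup_s|\mathcal M(s)|\le C\,\E\big(\int_{t_k}^{t_k+T}|z^{\xi,\tr}|^2|G_r^{\xi,\tr}|^2\mathrm dr\big)^{1/2}$, and split it by Young's inequality into $\tfrac14\E\sup_s|z^{\xi,\tr}(s)|^2$ (absorbed on the left) plus a constant times $\E\int_{t_k}^{t_k+T}|G_r^{\xi,\tr}|^2\,\mathrm dr$. The latter is again estimated via \textup{(H2)} and the one-step identity behind \eqref{n3.12} (not via \eqref{linearG}), producing an $O(T)$ bound. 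The cross term $2\langle z^{\xi,\tr}-u^{\xi,\tr},F^{\xi,\tr}\rangle$ is handled by Cauchy--Schwarz with \eqref{n3.15} ($\E|z^{\xi,\tr}-u^{\xi,\tr}|^2\le L_3\tr^{1-\nu}$) and \eqref{n3.4}, whose product is $O(\tr^{(1-3\nu)/2})=O(1)$ because $\nu\le1/3$. The residual $\alpha$-moment sums $\tr\sum_i\E|u^{\xi,\tr}(t_i)|^{\alpha}$ over $[t_k,t_k+T]$ are made uniform in $k$ by dividing the exponentially weighted bound \eqref{n3.10} by $e^{\bar a t_{k+1}}$, which leaves the uniform factor $e^{\bar a(T+1)}$, while the delay history satisfies $|u^{\xi,\tr}(t_i)|\le M$ for $i<0$ (truncation does not increase the norm). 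Together with \eqref{n3.9} for the starting value $\E|u^{\xi,\tr}(t_k)|^2\le L_1$, this produces $C(M,T)$ uniform in $k$, $\xi\in B(M)$ and $\tr\in(0,\hat\tr_1]$.

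I expect the main obstacle to be precisely the avoidance of the $\tr^{-\nu}$ blow-up, both in the drift and in the diffusion's quadratic variation: one must not use the truncated growth bounds \eqref{n3.4}--\eqref{linearG} directly but rather the dissipative structure of \textup{(H2)} to keep $\E\int_{t_k}^{t_k+T}|G_r^{\xi,\tr}|^2\,\mathrm dr$ of order $T$, and one must use the exponentially weighted estimate \eqref{n3.10} so that the window sums remain uniform in the starting index $k$. Once the uniform bound on $\E\sup_{s\in[t_k,t_k+T]}|z^{\xi,\tr}(s)|^2$ is secured, the passage to \eqref{n3.16} via Chebyshev and $\Phi^{-1}(K\tr_3^{-\nu})\to\infty$ is routine.
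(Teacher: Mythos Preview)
Your plan has a genuine gap at the BDG step. You claim that $\E\int_{t_k}^{t_k+T}|G_r^{\xi,\tr}|^2\,\mathrm dr$ can be bounded by a constant of order $T$, uniformly in $\tr\in(0,\hat\tr_1]$, ``via \textup{(H2)} and the one-step identity behind \eqref{n3.12}''. This is not available: hypothesis \textup{(H2)} controls only the combination $2\langle u,F\rangle+|G|^2$, not $|G|^2$ on its own, and the one-step identity $\E|\breve u^{\xi,\tr}(t_{i+1})|^2=\E|u^{\xi,\tr}(t_i)|^2+(2\E\langle u^{\xi,\tr}(t_i),F_{t_i}^{\xi,\tr}\rangle+\E|G_{t_i}^{\xi,\tr}|^2)\tr+\E|F_{t_i}^{\xi,\tr}|^2\tr^2$ is circular for this purpose---isolating $\E|G_{t_i}^{\xi,\tr}|^2$ reintroduces $-2\E\langle u^{\xi,\tr}(t_i),F_{t_i}^{\xi,\tr}\rangle$, for which no uniform upper bound is assumed. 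A concrete obstruction: with $d=m=1$, $f(x,y)=-x^5$, $g(x,y)=x^2$, one has $2xf+g^2=-2x^6+x^4\le a_1-a_2x^2$, so \textup{(H2)} holds with $\alpha=2$; yet $|G_{t_i}^{\xi,\tr}|^2=|u^{\xi,\tr}(t_i)|^4$, and Lemma~\ref{nle1} provides only second-moment and $\alpha$-moment ($=$ second-moment) control. Falling back on \eqref{linearG} gives $\E\int|G^{\xi,\tr}|^2\,\mathrm dr\le CK\tr^{-\nu}T$, whence after absorption your estimate reads $\E\sup_s|z^{\xi,\tr}(s)|^2\le C_1(M,T)+C_2(M,T)\tr^{-\nu}$. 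Chebyshev against the \emph{fixed} threshold $\Phi^{-1}(K\tr_3^{-\nu})$ then produces a bound that blows up as $\tr\downarrow0$, so the required uniformity over $\tr\in(0,\tr_3]$ is lost.

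The paper avoids precisely this obstacle by localisation instead of BDG. It introduces the stopping time $\beta_{\tr_3,k}^{\xi,\tr}:=\inf\{s\ge t_k:|z^{\xi,\tr}(s)|>\Phi^{-1}(K\tr_3^{-\nu})\}$ and applies It\^o's formula to $e^{\bar a s}|z^{\xi,\tr}(s)|^2$ on $[t_k,(t_k+T)\wedge\beta_{\tr_3,k}^{\xi,\tr}]$; stopped there, the martingale part has zero expectation, so $\E\int|G^{\xi,\tr}|^2$ never appears in isolation. The remaining drift is split into three pieces corresponding to your $2\langle u,F\rangle+|G|^2$, $|z|^2-|u|^2$, and $2\langle z-u,F\rangle$, and these are bounded uniformly in $\tr$ by \textup{(H2)} together with \eqref{n3.9}, \eqref{n3.10}, \eqref{n3.15} and \eqref{n3.4} (the last cross term contributes $O(\tr^{(1-3\nu)/2})=O(1)$ since $\nu\le1/3$, exactly as you anticipated). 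This gives $\E|z^{\xi,\tr}((t_k+T)\wedge\beta_{\tr_3,k}^{\xi,\tr})|^2\le Re^{T}$ with $R$ independent of $\tr$, and then $\PP\{\beta_{\tr_3,k}^{\xi,\tr}<t_k+T\}\le Re^{T}/(\Phi^{-1}(K\tr_3^{-\nu}))^2$ yields \eqref{n3.16} by choosing $\tr_3$ small. Your overall architecture (second-moment control plus Chebyshev plus $\Phi^{-1}\to\infty$) is correct; what must change is trading the running supremum for a stopping-time value so that the diffusion's quadratic variation is never estimated by itself.
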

\begin{proof}
Our analysis uses a localization procedure.
For any $\eta\in \mathcal C_{\mathcal F_0}^{\alpha}$, $\bar \tr\in(0,1)$, $\tr\in(0,\bar \tr]$ and  $k\geq-N$, define
\begin{align}\la{3.20n}
 \beta_{\bar \tr,k}^{\eta,\tr}=\inf\Big\{s\geq t_{k}: \v z^{\eta,\tr}(s)\v> \Phi^{-1}(K\bar\tr^{-\nu}) \Big\}.
\end{align}
Fix an $M>0$.  
Let $\xi\in B(M)$, $\tr_3\in(0,\hat\tr_1\wedge \tr^*_M]$, $\tr\in(0,\tr_3], k\geq0$.
We should point out that for any $t\in[t_k,\beta_{\tr_3,k}^{\xi,\tr}]$,
\begin{align}\la{contiz}
z^{\xi,\tr}(t)=\displaystyle u^{\xi,\tr}(t_k)+\int_{t_k}^t F^{\xi,\tr}_s\mathrm ds
+\int_{t_k}^t G^{\xi,\tr}_s\mathrm dW(s).
\end{align}
Using the It\^{o} formula,  we obtain from  \eqref{n3.9} and \eqref{contiz}  that for any  $T>0$,
\begin{align}\la{n3.18}
&\E\Big(e^{\bar a((t_k+T)\wedge \beta_{\tr_3,k}^{\xi,\tr}) }\v z^{\xi,\tr}((t_k+T)\wedge \beta_{\tr_3,k}^{\xi,\tr})\v^2\Big)\nn\\
=&e^{\bar a t_k} \E\v u^{\xi,\tr}(t_k)\v^2+\E\int_{t_k}^{(t_k+T)\wedge \beta_{\tr_3,k}^{\xi,\tr}}e^{\bar a s}\Big(\bar a\v z^{\xi,\tr}(s)\v^2+2\langle z^{\xi,\tr}(s),F^{\xi,\tr}_{s}\rangle\nn\\
&+\v G^{\xi,\tr}_{s}\v^2\Big)
\mathrm ds
\leq L_1e^{\bar a t_k}+I_1+I_2+I_3,
\end{align}
where $\bar a\in(0,1]$ is given by Lemma  \ref{nle1}, and
\begin{align*}
&I_1:=\E\int_{t_k}^{(t_k+T)\wedge \beta_{\tr_3,k}^{\xi,\tr}}e^{\bar a s}\Big(\bar a\v u^{\xi,\tr}(s)\v^2+2\langle u^{\xi,\tr}(s),F^{\xi,\tr}_{s}\rangle+\v G^{\xi,\tr}_{s}\v^2\Big)\mathrm ds,\\
&I_2:=\E\int_{t_k}^{(t_k+T)\wedge \beta_{\tr_3,k}^{\xi,\tr}}\bar ae^{\bar a s}\big(\v z^{\xi,\tr}(s)\v^2-\v u^{\xi,\tr}(s)\v^2\big)\mathrm ds,\\
&I_3:=\E\int_{t_k}^{(t_k+T)\wedge \beta_{\tr_3,k}^{\xi,\tr}}2e^{\bar a s}\langle z^{\xi,\tr}(s)-u^{\xi,\tr}(s),F^{\xi,\tr}_{s}\rangle\mathrm ds.
\end{align*}
We start with estimating $I_1$.
Using \textup{(H2)} and together with \eqref{n3.6} leads to
\begin{align*}
I_1
\leq &\E\int_{t_k}^{(t_k+T)\wedge \beta_{\tr_3,k}^{\xi,\tr}}e^{\bar a s}\Big(\bar a\v u^{\xi,\tr}(s)\v^2
+a_1-a_2\v u^{\xi,\tr}(s)\v^{\alpha}+a_3\v u^{\xi,\tr}(s-\tau)\v^{\alpha}\Big)\mathrm ds\nn\\
\leq&(\bar a+a_1)Te^{\bar a(t_k+T)}+(\bar a-a_2+a_3e^{\bar a\tau})\E\int_{t_k}^{(t_k+T)\wedge \beta_{\tr_3,k}^{\xi,\tr}}e^{\bar a s}\v u^{\xi,\tr}(s)\v^{\alpha}\mathrm ds\nn\\
&+a_3e^{\bar a \tau}\E\int_{t_k-\tau}^{t_k}e^{\bar a s}\v u^{\xi,\tr}(s)\v^{\alpha}\mathrm ds\nn\\
\leq &(\bar a+a_1)Te^{\bar a(t_k+T)}+(\bar a-a_2+a_3e^{\bar a\tau})\E\int_{t_k}^{(t_k+T)\wedge \beta_{\tr_3,k}^{\xi,\tr}}e^{\bar a s}\v u^{\xi,\tr}(s)\v^{\alpha}\mathrm ds\nn\\
&+a_3e^{\bar a \tau}\tr\sum_{i=k-N}^{k-1}e^{\bar a t_{i+1}}\E \v u^{\xi,\tr}(t_i)\v^{\alpha}.
\end{align*}
It is straightforward to see  from \eqref{3.15nn} that  $\bar a-a_2+a_3e^{\bar a\tau}<0$.  This, along with \eqref{n3.10} implies that
\begin{align}\la{n3.19}
I_1\leq(\bar a+ a_1)Te^{\bar a(t_k+T)}+a_3e^{\bar a \tau}L_2(1+e^{\bar at_{k}})\leq R_1 e^{\bar a (t_k+T)},
\end{align}
where $R_1:=(1+a_1)T+2a_3e^{\tau}L_2$.
Next we aim to estimate $I_2$.
According to \eqref{n3.9} and \eqref{n3.15} yields
\begin{align}\la{n3.20}
I_2
\leq &\E\int_{t_k}^{(t_k+T)\wedge \beta_{\tr_3,k}^{\xi,\tr}}\bar ae^{\bar a s}\big(2\v z^{\xi,\tr}(s)-u^{\xi,\tr}(s)\v^2+2\v u^{\xi,\tr}(s)\v^2-\v u^{\xi,\tr}(s)\v^2\big)\mathrm ds\nn\\
\leq&\bar ae^{\bar a (t_k+T)}\int_{t_k}^{t_k+T}\big(2\E\v z^{\xi,\tr}(s)-u^{\xi,\tr}(s)\v^2+\E\v u^{\xi,\tr}(s)\v^2\big)\mathrm ds
\leq R_2e^{\bar a (t_k+T)},
\end{align}
where $R_2:=T(2L_3+L_1)$.
Finally, by virtue of \eqref{n3.4}, \eqref{n3.9}, \eqref{n3.15}, $\nu\in(0,1/3]$, and then using the H\"older inequality,  we  get
\begin{align}\la{n3.21}
I_3
\leq&2e^{\bar a (t_k+T)}\int_{t_k}^{t_k+T}\E\big(\v z^{\xi,\tr}(s)-u^{\xi,\tr}(s)\v\v F^{\xi,\tr}_s\v\big)\mathrm ds\nn\\
\leq&2e^{\bar a (t_k+T)}K\tr^{-\nu}\int_{t_k}^{t_k+T}\Big(\E\v z^{\xi,\tr}(s)-u^{\xi,\tr}(s)\v^2
\E\big(1+\v u^{\xi,\tr}(s)\v\nn\\
&+\v u^{\xi,\tr}(s-\tau)\v\big)^2\Big)^{\frac{1}{2}}\mathrm ds\nn\\
\leq &2\sqrt3KT(L_3(1+2L_1)\tr^{1-3\nu})^{1/2}e^{\bar a (t_k+T)}\leq R_3e^{\bar a (t_k+T)},
\end{align}
where $R_3:=2\sqrt3KT(L_3(1+2L_1))^{1/2}$.
Plugging \eqref{n3.19}--\eqref{n3.21} back into \eqref{n3.18} gives
\begin{align*}
e^{\bar a t_k}\E\big\v z^{\xi,\tr}\big((t_k+T)\wedge \beta_{\tr_3,k}^{\xi,\tr}\big)\big\v^2\leq&\E\Big(e^{\bar a((t_k+T)\wedge \beta_{\tr_3,k}^{\xi,\tr}) }\v z^{\xi,\tr}((t_k+T)\wedge \beta_{\tr_3,k}^{\xi,\tr})\v^2\Big)\nn\\
\leq &Re^{\bar a (t_k+T)},
\end{align*}
where $R:=L_1+R_1+R_2+R_3$.
This, along with $\bar a\in(0,1]$ implies
\begin{align}\la{stopB}
&\E\big\v z^{\xi,\tr}\big((t_k+T)\wedge \beta_{\tr_3,k}^{\xi,\tr}\big)\big\v^2
\leq Re^{T}.
\end{align}
For any $\e>0$, choose a $\tr_3=\tr_3(M, \e, T)\in(0,\hat\tr_1\wedge \tr^*_M]$  sufficiently small such that
\begin{align}\la{n3.17}
Re^{T}<\e(\Phi^{-1}(K\tr_3^{-\nu}))^2.
\end{align}
According to \eqref{stopB} and \eqref{n3.17} concludes that
\begin{align*}
&\sup_{\tr\in(0, \tr_3]}\sup_{k\geq 0}\sup_{\xi\in B(M)} \PP\Big\{\beta_{\tr_3,k}^{\xi,\tr}< t_k+T\Big\}\nn\\
\leq&\sup_{\tr\in(0, \tr_3]}\sup_{k\geq 0}\sup_{\xi\in B(M)}\frac{\E\v z^{\xi,\tr}((t_k+T)\wedge \beta_{\tr_3,k}^{\xi,\tr})\v^2}{(\Phi^{-1}(K\tr_3^{-\nu}))^{2}}\nn\\
\leq&\sup_{\tr\in(0, \tr_3]}\sup_{k\geq 0}\sup_{\xi\in B(M)} \frac{Re^{ T}}{(\Phi^{-1}(K\tr_3^{-\nu}))^{2}}<\e.
\end{align*}
Note that $\{z^{\xi,\tr}(t)\}_{t\geq0}$ is right-continuous and left-limit, and
$$\lim_{t \uparrow t_k}\v z^{\xi,\tr}(t)\v=\v\breve u(t_k)\v\geq \v u(t_k)\v=\v z^{\xi,\tr}(t_k)\v,~~~\forall k\geq0.$$
This implies that for any $\tr \in(0, \tr_3]$, ~$k\geq0$, and $\xi\in B(M)$,
\begin{align}\la{n3.22}
&\PP\Big\{\sup_{s\in[t_k,t_k+T]}\v z^{\xi,\tr}(s)\v>\Phi^{-1}(K\tr_{3}^{-\nu})\Big\}\nn\\
=&\PP\Big\{\v z^{\xi,\tr}(s)\v>\Phi^{-1}(K\tr_{3}^{-\nu}), \exists s\in[t_k,t_k+T)\Big\}
=\PP\Big\{\beta_{\tr_3,k}^{\xi,\tr}< t_k+T\Big\}<\e.
\end{align}
Therefore \eqref{n3.16} is characterized by  \eqref{initialC} and \eqref{n3.22}.
\end{proof}
Thanks for the above lemmas, we go a further step to analyze the
uniform boundedness of the norm
of TEMSP $\{Y_{t_k}^{\xi,\tr}\}_{k\geq0}$ in probability.
\begin{proposition}\la{leerry}
Suppose that $(\textup{H}1)$ and $(\textup{H}2)$ hold.
Then for any $M>0$, $\e>0$, there exists a  $\Lambda^*=\Lambda^*(\hat\tr_1, M, \e)>M$ such that
\begin{align}\la{conclu4.6}
\sup_{\tr\in(0,\hat\tr_1]}\sup_{k\geq 0}\sup_{\xi\in B(M)}\PP\Big\{ \|Y^{\xi,\tr}_{t_k}\|>\Lambda^*\Big\}<\e.
\end{align}
\end{proposition}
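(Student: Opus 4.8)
The plan is to control the segment norm by the supremum of the continuous auxiliary process $z^{\xi,\tr}$ over a delay window of \emph{fixed} length $\tau$, and then to split the range of step sizes at the threshold $\tr_3$ furnished by Lemma \ref{nle3}. First I would record the elementary geometric reduction. On each mesh interval $[t_j,t_{j+1}]$ the process $y^{\xi,\tr}$ is an affine interpolation of the nodal values, so $|y^{\xi,\tr}(\cdot)|$ is convex there and its maximum is attained at a node; since $y^{\xi,\tr}(t_j)=u^{\xi,\tr}(t_j)=z^{\xi,\tr}(t_j)$ at every node (including, for $j\le0$, the truncated initial data $\Gamma_{\Phi,\nu}^{\tr}(\xi(t_j))$), and since $\tau=N\tr$ gives $[t_k-\tau,t_k]=[t_{k-N},t_k]$, I obtain
$$\|Y_{t_k}^{\xi,\tr}\|=\sup_{s\in[t_k-\tau,t_k]}|y^{\xi,\tr}(s)|=\max_{k-N\le j\le k}|z^{\xi,\tr}(t_j)|\le\sup_{s\in[t_{k-N},t_{k-N}+\tau]}|z^{\xi,\tr}(s)|.$$

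For the small step sizes I would apply Lemma \ref{nle3} with $T=\tau$ and starting index $k-N\ge-N$ (legitimate since $k\ge0$). This produces $\tr_3=\tr_3(M,\e,\tau)\in(0,\hat\tr_1\wedge\tr^*_M]$ such that, for every $\tr\in(0,\tr_3]$, $k\ge0$ and $\xi\in B(M)$,
$$\PP\big\{\|Y_{t_k}^{\xi,\tr}\|>\Phi^{-1}(K\tr_3^{-\nu})\big\}\le\PP\Big\{\sup_{s\in[t_{k-N},t_{k-N}+\tau]}|z^{\xi,\tr}(s)|>\Phi^{-1}(K\tr_3^{-\nu})\Big\}<\e.$$
Note that $\Phi^{-1}(K\tr_3^{-\nu})\ge\Phi^{-1}(K(\tr^*_M)^{-\nu})\ge M$ by \eqref{initialC}, since $\tr_3\le\tr^*_M$ and $\Phi^{-1}$ is increasing. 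Thus the claim holds on $(0,\tr_3]$ with threshold $\Lambda_1:=\Phi^{-1}(K\tr_3^{-\nu})$.

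For the remaining range $\tr\in(\tr_3,\hat\tr_1]$ the number of delay steps is bounded, $N=\tau/\tr<\tau/\tr_3=:N_0$, so a crude union bound is admissible. Using again $z^{\xi,\tr}(t_j)=u^{\xi,\tr}(t_j)$, the second-moment bound $\E|u^{\xi,\tr}(t_j)|^2\le L_1$ of Lemma \ref{nle1} (valid for $\tr\le\hat\tr_1$), and $|u^{\xi,\tr}(t_j)|\le M$ for $j\le0$, I would estimate
$$\E\|Y_{t_k}^{\xi,\tr}\|^2=\E\max_{k-N\le j\le k}|u^{\xi,\tr}(t_j)|^2\le(N+1)L_1+M^2\le(N_0+1)L_1+M^2,$$
uniformly in $k$ and $\xi$, and then choose $\Lambda_2$ large enough that Chebyshev's inequality yields probability below $\e$. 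Finally I would set $\Lambda^*:=(M+1)\vee\Lambda_1\vee\Lambda_2$, which depends only on $\hat\tr_1,M,\e$ (and the fixed data $\tau$), and combine the two ranges to conclude \eqref{conclu4.6}.

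The main obstacle is precisely the non-uniformity in the number of delay steps $N=\tau/\tr$, which blows up as $\tr\to0$: a naive node-by-node union bound over $(0,\hat\tr_1]$ would carry the factor $N+1$ and hence fail to be uniform in $\tr$. The device that overcomes this is Lemma \ref{nle3}, which controls the supremum of the continuous interpolation $z^{\xi,\tr}$ over a window whose length $\tau$ is independent of the mesh; the crude second-moment argument is then only invoked on the compact step-size range $(\tr_3,\hat\tr_1]$, where $N$ stays bounded.
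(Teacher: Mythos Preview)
Your argument is correct in substance and follows the paper's strategy: bound $\|Y_{t_k}^{\xi,\tr}\|$ by a supremum of $z^{\xi,\tr}$ over a window of fixed length $\tau$, invoke Lemma~\ref{nle3} for $\tr\in(0,\tr_3]$, and handle $\tr\in(\tr_3,\hat\tr_1]$ separately. Two comments are in order.

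First, a small inaccuracy: the displayed equality $\|Y_{t_k}^{\xi,\tr}\|=\max_{k-N\le j\le k}|z^{\xi,\tr}(t_j)|$ fails for $0\le k<N$, because on $[t_k-\tau,0]$ the process $y^{\xi,\tr}(\cdot)=\Gamma_{\Phi,\nu}^{\tr}(\xi(\cdot))$ is \emph{not} piecewise affine, so the supremum of its norm need not be attained at a mesh node. The paper handles this by inserting $M\vee$ in front of the nodal maximum. Your final inequality $\|Y_{t_k}^{\xi,\tr}\|\le\sup_{s\in[t_{k-N},t_k]}|z^{\xi,\tr}(s)|$ is nevertheless correct (since $y^{\xi,\tr}=z^{\xi,\tr}$ on $[-\tau,0]$), so only the intermediate justification needs adjusting.

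Second, your treatment of the coarse range $\tr\in(\tr_3,\hat\tr_1]$ is a genuine, if unnecessary, detour. You bound $\E\|Y_{t_k}^{\xi,\tr}\|^2$ via a union over $N+1\le N_0+1$ nodes and Lemma~\ref{nle1}, then apply Chebyshev to produce an auxiliary threshold $\Lambda_2$. The paper instead exploits the truncation built into the scheme: by \eqref{n3.5}, $|u^{\xi,\tr}(t_j)|\le\Phi^{-1}(K\tr^{-\nu})\le\Phi^{-1}(K\tr_3^{-\nu})=\Lambda^*$ for every $\tr\ge\tr_3$, whence $\|Y_{t_k}^{\xi,\tr}\|\le M\vee\Lambda^*=\Lambda^*$ \emph{deterministically} and the probability is zero. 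This makes the single choice $\Lambda^*=\Phi^{-1}(K\tr_3^{-\nu})$ work on the whole range $(0,\hat\tr_1]$, with no separate $\Lambda_2$ and no appeal to moment bounds. Your route is valid but misses this structural shortcut.
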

\begin{proof}
Fix an $M>0$.
For any $\e>0$,
making use of Lemma \ref{nle3}, there exists a  $\tr_3=\tr_3(M, \e,\tau)\in(0,\hat\tr_1\wedge\tr_M^*]$ such that
\begin{align}\la{n4.6}
&\sup_{\tr\in(0,\tr_3]}\sup_{k\geq-N}\sup_{\xi\in B(M)}\PP\Big\{\sup_{t\in[t_k,t_{k}+\tau]}\v z^{\xi,\tr}(t)\v>\Phi^{-1}\big(K\tr_3^{-\nu}\big)\Big\}
<\e.
\end{align}
It follows from \eqref{n3.7}, \eqref{temlis}, and \eqref{n3.8} that for any $\tr\in(0,\tr_3]$, $k\geq0$, and $\xi\in B(M)$,
$$\|Y^{\xi,\tr}_{t_k}\|=\sup_{t\in[t_k-\tau,t_k]}\v y^{\xi,\tr}(t)\v\leq M\vee\sup_{k-N\leq i\leq k}\v u^{\xi,\tr}(t_i)\v\leq M\vee\sup_{t\in[t_k-\tau,t_k]}\v z^{\xi,\tr}(t)\v.$$
Letting $\Lambda^*=\Phi^{-1}\big(K\tr_3^{-\nu}\big)$.
Combining the above inequality with \eqref{n4.6} and using $\Lambda^*\geq M$ we derive
\begin{align}\la{4.8n}
&\sup_{\tr\in(0,\tr_3]}\sup_{k\geq0}\sup_{\xi\in B(M)}\PP\Big\{\|Y^{\xi,\tr}_{t_k}\|>\Lambda^*\Big\}\nn\\
\leq& \sup_{\tr\in(0,\tr_3]}\sup_{k\geq0}\sup_{\xi\in B(M)}\PP\Big\{\sup_{t\in[t_k-\tau,t_{k}]}\v z^{\xi,\tr}(t)\v>\Phi^{-1}\big(K\tr_3^{-\nu}\big)\Big\}<\e.
\end{align}
It follows from the truncation property in \eqref{n3.5}--\eqref{n3.7}  that
\begin{align*}
&\sup_{\tr\in[\tr_3,\hat\tr_1]}\sup_{k\geq0}\sup_{\xi\in B(M)}\|Y^{\xi,\tr}_{t_k}\|
\leq M\vee\sup_{\tr\in[\tr_3,\hat\tr_1]}\Phi^{-1}\big(K\tr^{-\nu}\big)\leq\Lambda^*.
\end{align*}
This, along with \eqref{4.8n} implies
\begin{align*}
&\sup_{\tr\in(0,\hat\tr_1]}\sup_{k\geq 0}\sup_{\xi\in B(M)}\PP\big\{\|Y^{\xi,\tr}_{t_k}\|>\Lambda^*\big\}\nn\\
\leq&\sup_{\tr\in(0,\tr_3]}\sup_{k\geq 0}\sup_{\xi\in  B(M)}\PP\big\{\|Y^{\xi,\tr}_{t_k}\|>\Lambda^*\big\}
+\sup_{\tr\in[\tr_3,\hat\tr_1]}\sup_{k\geq 0}\sup_{\xi\in B(M)}\PP\big\{\|Y^{\xi,\tr}_{t_k}\|>\Lambda^*\big\}\nn\\
<&\e.
\end{align*}
The proof is complete.
\end{proof}
Furthermore,  we study the  attraction  of TEMSP $\{Y_{t_k}^{\xi,\tr}\}_{k\geq 0}$ in probability.  The strategy is as follows:
we first  prove the continuity of the sample path $y^{\xi,\tr}(t)$  with respect to $t$ and  the continuity of  $\{y^{\xi,\tr}(t_{k})\}_{k\geq0}$ (equivalent with $\{u^{\xi,\tr}(t_{k})\}_{k\geq0}$ )  with respect to the initial data $\xi$;
Next making use of above results and the fact
\begin{align*}
&\v y^{\xi,\tr}(t_{k}+\theta)-y^{\zeta,\tr}(t_{k}+\theta)\v\nn\\
\leq &\v y^{\xi,\tr}(t_{k}+\theta)-y^{\xi,\tr}(t_{k})\v+\v y^{\xi,\tr}(t_{k})-y^{\zeta,\tr}(t_{k})\v\nn\\
&+\v y^{\zeta,\tr}(t_{k})-y^{\zeta,\tr}(t_{k}+\theta)\v,~~~\forall\theta\in[-\tau,0],
\end{align*}
 we estimate  $\sup_{\theta\in[-\tau,0]}\v y^{\xi,\tr}(t_{k}+\theta)-y^{\zeta,\tr}(t_{k}+\theta)\v$ in probability and obtain the  attraction  of  $\{Y_{t_k}^{\xi,\tr}\}_{k\geq 0}$ in  probability.
\begin{lemma}\la{nole4}
Suppose that $(\textup{H}1)$ and $(\textup{H}2)$ hold.
Then for any $M>0$, $\e_1>0$, $\e_2>0$, there exists a $\tr_4=\tr_4(M, \e_1, \e_2)\in(0,\tr_3]$ such that
\begin{align}\la{n3.27}
&\sup_{\tr\in(0,\tr_4]}\sup_{k\geq0}\sup_{\xi\in B(M)}\PP\Big\{\sup_{t\in[t_k, t_{k}+\tau]}\v y^{\xi,\tr}(t)-z^{\xi,\tr}(t)\v\geq\e_2\Big\}<\e_1,
\end{align}
where $\tr_3=\tr_3(M,\e_1/2,2\tau)$ is given by \textup{Lemma \ref{nle3}}.
\end{lemma}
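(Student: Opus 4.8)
The plan is to track $y^{\xi,\tr}$ and $z^{\xi,\tr}$ step by step and to show that, as long as the truncation map $\Gamma_{\Phi,\nu}^{\triangle}$ is inactive, their difference collapses to a small Brownian oscillation. Fix $t\in[t_{k+j},t_{k+j+1}]$ with $0\le j\le N-1$. Because $u^{\xi,\tr}$ is piecewise constant, $F^{\xi,\tr}_s\equiv F^{\xi,\tr}_{t_{k+j}}$ and $G^{\xi,\tr}_s\equiv G^{\xi,\tr}_{t_{k+j}}$ on $[t_{k+j},t_{k+j+1})$, so \eqref{n3.8} reads $z^{\xi,\tr}(t)=u^{\xi,\tr}(t_{k+j})+F^{\xi,\tr}_{t_{k+j}}(t-t_{k+j})+G^{\xi,\tr}_{t_{k+j}}(W(t)-W(t_{k+j}))$, whereas \eqref{n3.7} writes $y^{\xi,\tr}(t)=u^{\xi,\tr}(t_{k+j})+\frac{t-t_{k+j}}{\tr}(u^{\xi,\tr}(t_{k+j+1})-u^{\xi,\tr}(t_{k+j}))$. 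Now $u^{\xi,\tr}(t_{k+j+1})=\Gamma_{\Phi,\nu}^{\triangle}(\br u^{\xi,\tr}(t_{k+j+1}))$ with $\br u^{\xi,\tr}(t_{k+j+1})=u^{\xi,\tr}(t_{k+j})+F^{\xi,\tr}_{t_{k+j}}\tr+G^{\xi,\tr}_{t_{k+j}}\triangle W_{k+j}$, so whenever no truncation occurs at this step we have $u^{\xi,\tr}(t_{k+j+1})-u^{\xi,\tr}(t_{k+j})=F^{\xi,\tr}_{t_{k+j}}\tr+G^{\xi,\tr}_{t_{k+j}}\triangle W_{k+j}$; the drift contributions then cancel and
\begin{align*}
y^{\xi,\tr}(t)-z^{\xi,\tr}(t)=G^{\xi,\tr}_{t_{k+j}}\Big(\frac{t-t_{k+j}}{\tr}\triangle W_{k+j}-\big(W(t)-W(t_{k+j})\big)\Big).
\end{align*}

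Next I would isolate the event on which the truncation is switched off throughout the window needed above. By \eqref{n3.2}, $\Gamma_{\Phi,\nu}^{\triangle}$ is the identity on $\{\v x\v\le\Phi^{-1}(K\tr^{-\nu})\}$, and since $\br u^{\xi,\tr}(t_{k+j+1})=z^{\xi,\tr}(t_{k+j+1}^{-})$ (as used in the proof of Lemma \ref{nle3}), the truncation stays inactive on all of $[t_k,t_k+\tau]$ provided $\v z^{\xi,\tr}\v$ does not exceed $\Phi^{-1}(K\tr^{-\nu})$ there. To turn the coefficient bound \eqref{linearG} into a deterministic constant I must also control the delayed arguments $u^{\xi,\tr}(t_{k+j-N})$, which for $0\le j\le N-1$ sweep the previous block $[t_k-\tau,t_k)$; thus the relevant arguments occupy the enlarged window $[t_k-\tau,t_k+\tau]$. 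This is precisely the role of $\tr_3=\tr_3(M,\e_1/2,2\tau)$: applying Lemma \ref{nle3} with initial index $k-N\ge-N$ and horizon $2\tau$ furnishes an event $A_k$ with $\PP(A_k^c)<\e_1/2$, uniformly in $\tr\in(0,\tr_3]$, $k\ge0$, $\xi\in B(M)$, on which $\sup_{s\in[t_k-\tau,t_k+\tau]}\v z^{\xi,\tr}(s)\v\le\Phi^{-1}(K\tr_3^{-\nu})\le\Phi^{-1}(K\tr^{-\nu})$ for $\tr\le\tr_3$. On $A_k$ no truncation occurs, $\v u^{\xi,\tr}(t_{k+j})\v\vee\v u^{\xi,\tr}(t_{k+j-N})\v\le\Phi^{-1}(K\tr_3^{-\nu})$, and hence by \eqref{linearG} $\v G^{\xi,\tr}_{t_{k+j}}\v\le C\tr^{-\nu/2}$ with $C:=K^{1/2}(1+2\Phi^{-1}(K\tr_3^{-\nu}))$ depending only on $M,\e_1,\tau$.

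Consequently, on $A_k$ the displayed identity gives $\sup_{t\in[t_k,t_k+\tau]}\v y^{\xi,\tr}(t)-z^{\xi,\tr}(t)\v\le 2C\tr^{-\nu/2}\max_{0\le j\le N-1}\sup_{s\in[t_{k+j},t_{k+j+1}]}\v W(s)-W(t_{k+j})\v$, and it remains a pure Brownian tail estimate. Splitting
\begin{align*}
\PP\Big\{\sup_{t\in[t_k,t_k+\tau]}\v y^{\xi,\tr}(t)-z^{\xi,\tr}(t)\v\ge\e_2\Big\}\le\PP(A_k^c)+\PP\Big(A_k\cap\Big\{\sup_{t\in[t_k,t_k+\tau]}\v y^{\xi,\tr}(t)-z^{\xi,\tr}(t)\v\ge\e_2\Big\}\Big),
\end{align*}
I would bound the second term by a union bound over the $N=\tau/\tr$ subintervals combined with the maximal (Doob/Burkholder--Davis--Gundy) estimate $\PP\{\sup_{s\in[0,\tr]}\v W(s)\v\ge\delta\}\le C_q\tr^{q/2}\delta^{-q}$ for $q\ge2$, taken at $\delta=\e_2\tr^{\nu/2}/(2C)$. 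This yields a bound of order $N\tr^{q/2}\delta^{-q}\asymp\tr^{q(1-\nu)/2-1}$, which vanishes as $\tr\to0$ once $q>2/(1-\nu)$; since $\nu\le1/3$ the choice $q=4$ suffices. Choosing $\tr_4\in(0,\tr_3]$ so small that the second term is $<\e_1/2$ uniformly in $k\ge0$, $\xi\in B(M)$, and adding $\PP(A_k^c)<\e_1/2$ yields \eqref{n3.27}. I expect the main obstacle to be the truncation bookkeeping of the second paragraph, namely certifying with probability $>1-\e_1/2$, uniformly in $k$ and $\xi$, that $\Gamma_{\Phi,\nu}^{\triangle}$ is inactive over the full $2\tau$-window so that both the clean increment identity and the deterministic bound on $\v G^{\xi,\tr}\v$ hold; the ensuing Brownian estimate is routine apart from choosing $q$ large enough to defeat the growing number of steps $N$.
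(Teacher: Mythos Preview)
Your proposal is correct and follows essentially the same route as the paper: both exploit that on a good event (where $z^{\xi,\tr}$ stays below the truncation radius over a $2\tau$-window, probability $>1-\e_1/2$ via Lemma~\ref{nle3}) the truncation is inactive, so the drift terms in $y-z$ cancel and only the Brownian bridge term $G^{\xi,\tr}_{t_i}\big(\frac{t-t_i}{\tr}\triangle W_i-(W(t)-W(t_i))\big)$ remains; then a fourth-moment Doob/Chebyshev estimate summed over the $N$ subintervals makes the remaining probability $<\e_1/2$ for $\tr$ small. The only cosmetic difference is that the paper bounds $|G^{\xi,\tr}_{t_i}|$ on the good event by a constant $L$ directly from (H1) (local boundedness on the ball of radius $\Phi^{-1}(K\tr_3^{-\nu})$), yielding the clean estimate $\tilde L\tau\tr$, whereas you invoke \eqref{linearG} and carry the extra factor $\tr^{-\nu/2}$, ending with order $\tr^{q(1-\nu)/2-1}$; since $\nu\le 1/3$ both give a positive power at $q=4$, so this makes no difference to the argument.
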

\begin{proof}
Fix an $M>0$.
For any $\e_1>0$, recalling \eqref{3.20n} and  applying Lemma \ref{nle3}, there is a $\tr_3=\tr_3(M,\e_1/2,2\tau)\in(0,\hat\tr_1\wedge\tr^*_M]$ such that
\begin{align}\la{beta4}
\sup_{\tr\in(0,\tr_3]}\sup_{k\geq-N}\sup_{\xi\in B(M)}\PP\Big\{\beta_{\tr_3,k}^{\xi,\tr}< t_{k}+2\tau\Big\}<\frac{\e_1}{2}.
\end{align}
Let $\tr\in(0,\tr_3]$, $k\geq -N$,  $\xi\in B(M)$.
It follows from   \eqref{n3.7} and  \eqref{contiz} that
\begin{align*}
&\E\Big(\sup_{t\in[t_k+\tau,t_{k}+2\tau]}\big(\big\v y^{\xi,\tr}(t)-z^{\xi,\tr}(t)\big\v^4\textbf 1_{\{\beta_{\tr_3,k}^{\xi,\tr}\geq t_k+2\tau\}}\big)\Big)\nn\\
=&\E\Big(\sup_{i\in\{k+N,\cdots,k+2N-1\}}\sup_{t_{i}\leq t\leq t_{i+1}}\big(\big\v \frac{t_{i+1}-t}{\tr}u^{\xi,\tr}(t_{i})+\frac{t-t_i}{\tr}u^{\xi,\tr}(t_{i+1})\nn\\
&-u^{\xi,\tr}(t_{i})-F^{\xi,\tr}_{t_{i}}(t-t_i)-G^{\xi,\tr}_{t_{i}}
\big(W(t)-W(t_{i})\big)\big\v^4\textbf 1_{\{\beta_{\tr_3,k}^{\xi,\tr}\geq t_k+2\tau\}}\big)\Big)\nn\\
=&\E\Big(\sup_{i\in\{k+N,\cdots,k+2N-1\}}\sup_{t_{i}\leq t\leq t_{i+1}}\big(\big\v \frac{t-t_i}{\tr}\big(u^{\xi,\tr}(t_{i+1})-u^{\xi,\tr}(t_{i})\big)\nn\\
&-F^{\xi,\tr}_{t_{i}}(t-t_i)-G^{\xi,\tr}_{t_{i}}
\big(W(t)-W(t_{i})\big)\big\v^4\textbf 1_{\{\beta_{\tr_3,k}^{\xi,\tr}\geq t_k+2\tau\}}\big)\Big).
\end{align*}
By  virtue of \eqref{n3.5} we obtain
\begin{align}\la{err+1}
&\E\Big(\sup_{t\in[t_k+\tau,t_{k}+2\tau]}\big(\big\v y^{\xi,\tr}(t)-z^{\xi,\tr}(t)\big\v^4\textbf 1_{\{\beta_{\tr_3,k}^{\xi,\tr}\geq t_k+2\tau\}}\big)\Big)\nn\\
=&\E\Big(\sup_{i\in\{k+N,\cdots,k+2N-1\}}\sup_{t_{i}\leq t\leq t_{i+1}}\big(\big\v\frac{t-t_i}{\tr}\big(F^{\xi,\tr}_{t_{i}}\tr+G^{\xi,\tr}_{t_{i}}\tr W_{i}\big)\nn\\
&-F^{\xi,\tr}_{t_{i}}(t-t_i)-G^{\xi,\tr}_{t_{i}}
\big(W(t)-W(t_{i})\big)\big\v^4\textbf 1_{\{\beta_{\tr_3,k}^{\xi,\tr}\geq t_k+2\tau\}}\big)\Big)\nn\\
\leq&8\E\Big(\sup_{i\in\{k+N,\cdots,k+2N-1\}}\sup_{t_{i}\leq t\leq t_{i+1}}\big(\big\v\frac{t-t_i}{\tr}G^{\xi,\tr}_{t_{i}}\tr W_i\big\v^4\textbf 1_{\{\beta_{\tr_3,k}^{\xi,\tr}\geq t_k+2\tau\}}\big)\Big)\nn\\
&+8\E\Big(\sup_{i\in\{k+N,\cdots,k+2N-1\}}\sup_{t_{i}\leq t\leq t_{i+1}}\big(\big\v G^{\xi,\tr}_{t_{i}}(W(t)-W(t_i))\big\v^4\textbf 1_{\{\beta_{\tr_3,k}^{\xi,\tr}\geq t_k+2\tau\}}\big)\Big).
\end{align}
In view of (H1), there exists a constant $L>0$ sufficiently large such that
\begin{align*}
\sup_{\v x\v\vee\v y\v\leq \Phi^{-1}(K\tr_3^{-\nu})}\v g(x,y)\v\leq L.
\end{align*}
Inserting this into \eqref{err+1} and using the Doob martingale inequality implies
\begin{align*}
&\E\Big(\sup_{t\in[t_k+\tau,t_{k}+2\tau]}\big(\big\v y^{\xi,\tr}(t)-z^{\xi,\tr}(t)\big\v^4\textbf 1_{\{\beta_{\tr_3,k}^{\xi,\tr}\geq t_k+2\tau\}}\big)\Big)\nn\\
\leq &8L^4\E\Big(\sup_{i\in\{k+N,\cdots,k+2N-1\}}\big\v\tr W_i\big\v^4\Big)\nn\\&+8L^4\E\Big(\sup_{i\in\{k+N,\cdots,k+2N-1\}}\sup_{t_{i}\leq t\leq t_{i+1}}\big\v W(t)-W(t_i)\big\v^4\Big)\nn\\
\leq&8L^4\sum_{i=k+N}^{k+2N-1}\E\Big(\big\v\tr W_i\big\v^4+\sup_{t_{i}\leq t\leq t_{i+1}}\big\v  W(t)-W(t_i)\big\v^4\Big)
\leq\tilde LN\tr^{2}\leq \tilde L\tau\tr,
\end{align*}
where $\tilde L$ is a constant.
For any $\e_2>0$, choose a $\tr_4=\tr_4(M, \e_1,\e_2)\in(0,\tr_3]$ sufficiently small such that
\begin{align*}
\frac{\tilde L\tau\tr_4}{\e_2^4}<\frac{\e_1}{2}.
\end{align*}
An application of  Chebyshev's inequality arrives at that for any $\tr\in(0,\tr_4]$,
\begin{align*}
&\PP\Big\{\beta_{\tr_3,k}^{\xi,\tr}\geq t_{k}+2\tau, \sup_{t\in[t_k+\tau,t_{k}+2\tau]}\big\v y^{\xi,\tr}(t)-z^{\xi,\tr}(t)\big\v\geq\e_2\Big\}\nn\\
\leq&\frac{1}{\e_2^4}\E\Big(\sup_{t\in[t_k+\tau,t_{k}+2\tau]}\big(\big\v y^{\xi,\tr}(t)-z^{\xi,\tr}(t)\big\v^4\textbf 1_{\{\beta_{\tr_3,k}^{\xi,\tr}\geq t_k+2\tau\}}\big)\Big)
\leq\frac{\tilde L\tau\tr_4}{\e_2^4}<\frac{\e_1}{2}.
\end{align*}
This, combining with \eqref{beta4} implies that the required assertion \eqref{n3.27} follows.
\end{proof}
\begin{lemma}\la{le8s}
Suppose that \textup{(H1)} and \textup{(H2)} hold. Then
for any $M>0$, $\e_1>0$,~$\e_2>0$, there exists a positive integer  $j^{*}=j^*(M, \e_1,\e_2,\tr_4)$  such that
\begin{align*}
\sup_{\tr\in(0,\tr_4]}\sup_{k\geq 0}\sup_{\xi\in  B(M)}\PP\Big\{\sup_{\substack{\v s_1-s_2\v \leq {\tau/j^*}\\ s_1,s_2\in[t_{k},t_{k}+\tau]}}\v y^{\xi,\tr}(s_1)-y^{\xi,\tr}(s_2)\v\geq\e_2\Big\}
<\e_1,
\end{align*}
where $\tr_4=\tr_4(M, \e_1/2,\e_2/3)$ is given by \textup{Lemma \ref{nole4}}.
\end{lemma}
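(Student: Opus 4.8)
The plan is to bound the modulus of continuity of the interpolation $y^{\xi,\tr}$ by first replacing it with the auxiliary process $z^{\xi,\tr}$ of \eqref{n3.8}, and then proving a modulus estimate for $z^{\xi,\tr}$ that is uniform in $\tr$, $k$ and $\xi\in B(M)$. For $s_1,s_2\in[t_k,t_k+\tau]$ the triangle inequality gives
\begin{align*}
\big\v y^{\xi,\tr}(s_1)-y^{\xi,\tr}(s_2)\big\v\leq 2\sup_{t\in[t_k,t_k+\tau]}\big\v y^{\xi,\tr}(t)-z^{\xi,\tr}(t)\big\v+\big\v z^{\xi,\tr}(s_1)-z^{\xi,\tr}(s_2)\big\v,
\end{align*}
so the event in the statement is contained in the union of $\{\sup_{[t_k,t_k+\tau]}\v y^{\xi,\tr}-z^{\xi,\tr}\v\geq\e_2/3\}$ and $\{\sup_{\v s_1-s_2\v\leq\tau/j^*}\v z^{\xi,\tr}(s_1)-z^{\xi,\tr}(s_2)\v\geq\e_2/3\}$. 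By \textup{Lemma \ref{nole4}}, invoked with parameters $(M,\e_1/2,\e_2/3)$, the first event has probability $<\e_1/2$ uniformly in $\tr\in(0,\tr_4]$, $k$ and $\xi$. Hence it suffices to choose $j^*$ so large that the second event has probability $<\e_1/2$.

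For the $z^{\xi,\tr}$-modulus I would localise onto the event where the truncation map is inactive, so that $z^{\xi,\tr}$ is a genuinely continuous It\^o process with $\tr$-independent bounded coefficients. Recalling the exit time $\beta_{\tr_3,k-N}^{\xi,\tr}$ from \eqref{3.20n} with $\tr_3=\tr_3(M,\e_1/4,2\tau)$ (the very threshold appearing inside \textup{Lemma \ref{nole4}}, so that $\tr_4\leq\tr_3$), set $A:=\{\beta_{\tr_3,k-N}^{\xi,\tr}\geq t_k+\tau\}$. Started at $t_{k-N}=t_k-\tau$, this exit time controls $z^{\xi,\tr}$, and hence $u^{\xi,\tr}$, on the whole window $[t_k-\tau,t_k+\tau)$, which covers both the current and the delayed arguments entering $F^{\xi,\tr}_s,G^{\xi,\tr}_s$ for $s\in[t_k,t_k+\tau]$; by \textup{Lemma \ref{nle3}} one has $\PP(A^c)<\e_1/4$ uniformly for $\tr\in(0,\tr_4]$. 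On $A$ the norm of $u^{\xi,\tr}$ stays below $R_*:=\Phi^{-1}(K\tr_3^{-\nu})\leq\Phi^{-1}(K\tr^{-\nu})$, so $\Gamma_{\Phi,\nu}^{\triangle}$ acts as the identity at every grid point crossed, $z^{\xi,\tr}$ has no jumps, and $z^{\xi,\tr}(s)-z^{\xi,\tr}(s')=\int_{s'}^{s}F^{\xi,\tr}_u\,\mathrm du+\int_{s'}^{s}G^{\xi,\tr}_u\,\mathrm dW(u)$ with $\v F^{\xi,\tr}_u\v\vee\v G^{\xi,\tr}_u\v\leq L$ for a constant $L=L(M,\e_1)$ independent of $\tr$ and $k$, by \textup{(H1)}.

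It then remains to estimate the continuous modulus of $z^{\xi,\tr}$ on $A$ by a fourth-moment argument, as in \textup{Lemma \ref{nole4}}. Partitioning $[t_k,t_k+\tau]$ into $j^*$ sub-intervals $I_l=[a_l,a_{l+1}]$ of length $\delta=\tau/j^*$ and using that any pair $s_1,s_2$ with $\v s_1-s_2\v\leq\delta$ lies in at most two adjacent sub-intervals, one obtains $\sup_{\v s_1-s_2\v\leq\delta}\v z^{\xi,\tr}(s_1)-z^{\xi,\tr}(s_2)\v\leq 3\max_{0\leq l<j^*}\sup_{s\in I_l}\v z^{\xi,\tr}(s)-z^{\xi,\tr}(a_l)\v=:3M_{j^*}$. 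Passing to the stopped process $z^{\xi,\tr}(\cdot\wedge\beta_{\tr_3,k-N}^{\xi,\tr})$ (which coincides with $z^{\xi,\tr}$ on $A$), the bound $\v F^{\xi,\tr}_u\v\vee\v G^{\xi,\tr}_u\v\leq L$ together with H\"older's inequality for the drift and the Doob/Burkholder inequality for the stochastic integral gives $\E[\sup_{s\in I_l}\v z^{\xi,\tr}(s)-z^{\xi,\tr}(a_l)\v^4\mathbf 1_A]\leq C(\delta^4+\delta^2)\leq C'\delta^2$ with $C'=C'(M,\e_1)$; summing over the $j^*$ sub-intervals yields $\E[M_{j^*}^4\mathbf 1_A]\leq C'\tau^2/j^*$. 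Chebyshev's inequality then bounds $\PP\{\sup_{\v s_1-s_2\v\leq\delta}\v z^{\xi,\tr}(s_1)-z^{\xi,\tr}(s_2)\v\geq\e_2/3,\,A\}\leq(9/\e_2)^4 C'\tau^2/j^*$, which is $<\e_1/4$ for all $j^*$ large enough; combined with $\PP(A^c)<\e_1/4$, the second event has probability $<\e_1/2$, completing the proof.

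The principal obstacle is to make the modulus estimate uniform in the step size, since the a priori bounds \eqref{n3.4}--\eqref{linearG} on $F^{\xi,\tr},G^{\xi,\tr}$ blow up like $\tr^{-\nu}$ as $\tr\rightarrow 0$, while the interpolation $y^{\xi,\tr}$ (equivalently $z^{\xi,\tr}$) may jump at grid points through the truncation. Both difficulties are resolved simultaneously by the localisation onto $A$, where the truncation is switched off and the coefficients are dominated by the $\tr$-free constant $L$. The secondary care point is the delay structure, which forces the localising window to reach back to $t_k-\tau$; this is precisely why the exit time is started at $t_{k-N}$ rather than at $t_k$.
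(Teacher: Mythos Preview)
Your proposal is correct and follows essentially the same route as the paper: the same triangle-inequality splitting $|y(s_1)-y(s_2)|\le 2\sup|y-z|+|z(s_1)-z(s_2)|$, the same localisation onto the event where the truncation is inactive so that $|F|\vee|G|\le L$ with a $\tr$-free $L$, and the same fourth-moment Burkholder/Doob bound over the $j^*$ sub-intervals followed by a union bound and Chebyshev. The only cosmetic difference is indexing: you start the exit time at $t_{k-N}$ with horizon $2\tau$ and work on $[t_k,t_k+\tau]$, whereas the paper starts it at $t_k$ with horizon $2\tau$ and works on $[t_k+\tau,t_k+2\tau]$, then shifts the index back---the two are equivalent and both correctly cover the delayed argument.
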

\begin{proof}
Fix an $M>0$.
For any $\e_1>0$,
recalling \eqref{3.20n} and applying Lemma \ref{nle3}, there is a $\tr_3=\tr_3(M, \e_1/4,2\tau)\in(0,\hat\tr_1\wedge\tr_{M}^*]$ such that
\begin{align}\la{beta8s}
\sup_{\tr\in(0,\tr_3]}\sup_{k\geq-N}\sup_{\xi\in  B(M)}\PP\Big\{\beta_{\tr_3,k}^{\xi,\tr}< t_{k}+2\tau\Big\}<\frac{\e_1}{4}.
\end{align}
Let $\tr\in(0,\tr_3]$, $k\geq -N$, $\xi\in B(M)$.
For any integer $j^*\geq1$, define $t_k^{j^*,j}=t_k+j\tau/j^*,~j=0,\cdots, j^*.$
According to  \eqref{contiz}  and the Burkholder-Davis-Gundy inequality,  we derive that for any  $j\in\{0,\cdots, j^*-1\}$,
\begin{align}\la{n4.3}
&\E\Big(\sup_{t\in[t_{k+N}^{j^*,j},t_{k+N}^{j^*,j+1}]}\big(\v z^{\xi,\tr}(t\wedge\beta_{\tr_3,k}^{\xi,\tr} )-z^{\xi,\tr}(t_{k+N}^{j^*,j}\wedge\beta_{\tr_3,k}^{\xi,\tr})\v^4\big)\Big)\nn\\
= &\E\Big(\sup_{t\in[t_{k+N}^{j^*,j},t_{k+N}^{j^*,j+1}]}\big(\big\v\int_{t_{k+N}^{j^*,j}\wedge\beta_{\tr_3,k}^{\xi,\tr}}
^{t\wedge\beta_{\tr_3,k}^{\xi,\tr}}F^{\xi,\tr}_h\mathrm dh+\int_{t_{k+N}^{j^*,j}\wedge\beta_{\tr_3,k}^{\xi,\tr}}^{t\wedge\beta_{\tr_3,k}^{\xi,\tr}}G^{\xi,\tr}_h\mathrm d W(h)\big\v^{4}\big)\Big)\nn\\
\leq &8\E\Big(\sup_{t\in[t_{k+N}^{j^*,j},t_{k+N}^{j^*,j+1}]}\big(\big\v\int_{t_{k+N}^{j^*,j}}^{t}F^{\xi,\tr}_h\textbf 1_{\{\beta_{\tr_3,k}^{\xi,\tr}\geq h\}}\mathrm dh\big\v^4\nn\\
&+\big\v \int_{t_{k+N}^{j^*,j}}^{t}G^{\xi,\tr}_h\textbf 1_{\{\beta_{\tr_3,k}^{\xi,\tr}\geq h\}}\mathrm d W(h)\big\v^4\big)\Big)\nn\\
\leq&8\E\Big(\int_{t_{k+N}^{j^*,j}}^{t_{k+N}^{j^*,j+1}}\big\v F^{\xi,\tr}_h\big\v \textbf 1_{\{\beta_{\tr_3,k}^{\xi,\tr}\geq h\}}\mathrm dh\Big)^4+\frac{2^{21}}{3^6}\E\Big(\int_{t_{k+N}^{j^*,j}}^{t_{k+N}^{j^*,j+1}}\big\v G^{\xi,\tr}_h\big\v^2\textbf 1_{\{\beta_{\tr_3,k}^{\xi,\tr}\geq h\}}\mathrm d h\Big)^2.
\end{align}
By virtue of \textup{(H1)} there exists a constant $L>0$ such that
\begin{align}
\sup_{\v x\v\vee\v y\v\leq \Phi^{-1}(K\tr_3^{-\nu})}\big(\v f(x,y)\v\vee \v g(x,y)\v\big)\leq L.
\end{align}
Inserting this into \eqref{n4.3} implies
\begin{align}\la{n4.4}
&\E\Big(\sup_{t\in[t_{k+N}^{j^*,j},t_{k+N}^{j^*,j+1}]}\big(\v z^{\xi,\tr}(t\wedge\beta_{\tr_3,k}^{\xi,\tr} )-z^{\xi,\tr}(t_{k+N}^{j^*,j}\wedge\beta_{\tr_3,k}^{\xi,\tr})\v^4\big)\Big)\leq \frac{R_4}{(j^*)^2},
\end{align}
where 
$R_4:=8L^4\tau^4+2^{21}L^4\tau^2/3^6$.
 For any $\e_2>0$, choose $j^*\geq 1\vee(4\cdot9^4R_4/(\e_1\e_2^4))$.
The fundamental theory of calculus shows that
\begin{align*}
&\PP\Big\{\beta_{\tr_3,k}^{\xi,\tr}\geq t_{k}+2\tau, \sup_{\substack{\v s_1-s_2\v\leq\tau/j^*\\ s_1,s_2\in[t_{k}+\tau,t_{k}+2\tau]}}\v z^{\xi,\tr}(s_1)-z^{\xi,\tr}(s_2)\v\geq\frac{\e_2}{3}\Big\}\nn\\
\leq&\PP\Big\{\beta_{\tr_3,k}^{\xi,\tr}\geq t_{k}+2\tau, ~3\max_{0\leq j\leq j^*-1}\sup_{t\in\mathcal [t_{k+N}^{j^*,j},t_{k+N}^{j^*,j+1}]}\v z^{\xi,\tr}(t)-z^{\xi,\tr}(t_{k+N}^{ j^*,j})\v\geq\frac{\e_2}{3}\Big\}\nn\\
\leq &\sum_{j=0}^{j^*-1}\PP\Big\{\beta_{\tr_3,k}^{\xi,\tr}\geq t_{k}+2\tau, ~\sup_{t\in\mathcal [t_{k+N}^{j^*,j},t_{k+N}^{j_1,j+1}]}\v z^{\xi,\tr}(t)-z^{\xi,\tr}(t_{k+N}^{ j^*,j})\v\geq\frac{\e_2}{9}\Big\}.
\end{align*}
Using the Chebyshev inequality implies
\begin{align*}
&\PP\Big\{\beta_{\tr_3,k}^{\xi,\tr}\geq t_{k}+2\tau, \sup_{\substack{\v s_1-s_2\v\leq \tau/ j^*\\ s_1,s_2\in[t_{k}+\tau,t_{k}+2\tau]}}\v z^{\xi,\tr}(s_1)-z^{\xi,\tr}(s_2)\v\geq\frac{\e_2}{3}\Big\}\nn\\
\leq&\frac{9^4}{\e_2^4}\sum_{j=0}^{j^*-1}\E\Big(\sup_{t\in\mathcal [t_{k+N}^{j^*,j},t_{k+N}^{j^*,j+1}]}\big(\v z^{\xi,\tr}(t)-z^{\xi,\tr}(t_{k+N}^{j^*,j})\v^4\textbf 1_{\{\beta_{\tr_3,k}^{\xi,\tr}\geq t_{k}+2\tau\}}\big)\Big)\nn\\
\leq&\frac{9^4}{\e_2^4}\sum_{j=0}^{j^*-1}\E\Big(\sup_{t\in\mathcal [t_{k+N}^{j^*,j},t_{k+N}^{j^*,j+1}]}\big(\v z^{\xi,\tr}(t\wedge \beta_{\tr_3,k}^{\xi,\tr})-z^{\xi,\tr}(t_{k+N}^{j^*,j}\wedge \beta_{\tr_3,k}^{\xi,\tr})\v^4\big)\Big).
\end{align*}
This, together with \eqref{n4.4} implies that
\begin{align*}
&\PP\Big\{\beta_{\tr_3,k}^{\xi,\tr}\geq t_{k}+2\tau, \sup_{\substack{\v s_1-s_2\v\leq\tau/ j^*\\ s_1,s_2\in[t_{k}+\tau,t_{k}+2\tau]}}\v z^{\xi,\tr}(s_1)-z^{\xi,\tr}(s_2)\v\geq\frac{\e_2}{3}\Big\}
\leq \frac{9^4R_4 }{\e_2^{4}j^*}\leq\frac{\e_1}{4}.
\end{align*}
Combining the above inequality with \eqref{beta8s} we arrive at
\begin{align}\la{4.6n}
\sup_{\tr\in(0,\tr_3]}\sup_{k\geq 0}\sup_{\xi\in  B(M)}\PP\Big\{\sup_{\substack{\v s_1-s_2\v\leq \tau/ j^*\\ s_1,s_2\in[t_{k},t_{k}+\tau]}}\v z^{\xi,\tr}(s_1)-z^{\xi,\tr}(s_2)\v\geq\frac{\e_2}{3}\Big\}
<\frac{\e_1}{2}.
\end{align}
In view of Lemma \ref{nole4}, there exists a $\tr_4=\tr_4(M, \e_1/2,\e_2/3)\in(0,\tr_3]$ such that
\begin{align}\la{4.7n}
\sup_{\tr\in(0,\tr_4]}\sup_{k\geq0}\sup_{\xi\in B(M)}\PP\Big\{\sup_{t\in[t_k,t_{k}+\tau]}\v y^{\xi,\tr}(t)-z^{\xi,\tr}(t)\v\geq\frac{\e_2}{3}\Big\}
<\frac{\e_1}{2}.
\end{align}
It is straightforward to see from \eqref{4.6n} and \eqref{4.7n} that
\begin{align*}
&\sup_{\tr\in(0,\tr_4]}\sup_{k\geq0}\sup_{\xi\in  B(M)}\PP\Big\{ \sup_{\substack{\v s_1-s_2\v\leq \tau/j^*\\ s_1,s_2\in[t_{k},t_{k}+\tau]}}\big\v y^{\xi,\tr}(s_1)-y^{\xi,\tr}(s_2)\big\v\geq\e_2\Big\}\nn\\
\leq& \sup_{\tr\in(0,\tr_4]}\sup_{k\geq0}\sup_{\xi\in  B(M)}\PP\Big\{ 2\sup_{t\in[t_{k},t_{k}+\tau]}\big\v y^{\xi,\tr}(t)-z^{\xi,\tr}(t)\big\v\geq\frac{2\e_2}{3}\Big\}\nn\\
&+\sup_{\tr\in(0,\tr_4]}\sup_{k\geq0}\sup_{\xi\in  B(M)}\PP\Big\{\sup_{\substack{\v s_1-s_2\v\leq\tau/ j^*\\ s_1,s_2\in[t_{k},t_{k}+\tau]}}\big\v z^{\xi,\tr}(s_1)-z^{\xi,\tr}(s_2)\big\v\geq\frac{\e_2}{3}\Big\}
<\e_1.
\end{align*}
The proof is complete.
\end{proof}
\begin{lemma}\la{4.4nle}
Suppose that \textup{(H3)} holds.
Then  there is a $ \bar\lambda=\bar\lambda(\hat\tr_2)\in(0,1]$ such that for any  $M>0$, $\tr\in(0,\hat\tr_2]$, and $k\geq 0$,
 \begin{align*}
\sup_{\xi,\zeta\in B(M)}\E\v u^{\xi,\tr}(t_{k})-u^{\zeta,\tr}(t_{k})\v^2\leq L_{M}e^{-\bar \lambda t_{k}},
\end{align*}
where 
$$L_{M}:=4M^2(1+(b_2+2K^2)\tau e^{\tau})+b_4\tau e^{\tau}\sup_{\v x\v\vee\v y\v\leq M}V(x,y).$$
\end{lemma}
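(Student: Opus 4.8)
The plan is to follow the exponential-weighting scheme of Lemma~\ref{nle1}, but applied to the squared difference of two numerical solutions driven by the \emph{same} Brownian motion. Write $e_k:=u^{\xi,\tr}(t_k)-u^{\zeta,\tr}(t_k)$ and $\br e_{k+1}:=\br u^{\xi,\tr}(t_{k+1})-\br u^{\zeta,\tr}(t_{k+1})$, and set $U_k:=\E\v e_k\v^2$ together with $\bar V_k:=\E V\big(u^{\xi,\tr}(t_k),u^{\zeta,\tr}(t_k)\big)$. The first observation is that the truncation map $\Gamma_{\Phi,\nu}^{\tr}$ is the radial projection onto a ball and is therefore nonexpansive, so $\v e_{k+1}\v\leq\v\br e_{k+1}\v$ pointwise and hence $U_{k+1}\leq\E\v\br e_{k+1}\v^2$. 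This reduces the analysis to the pre-truncation increment, which depends affinely on $f$ and $g$.

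Expanding $\br e_{k+1}=e_k+\big(F_{t_k}^{\xi,\tr}-F_{t_k}^{\zeta,\tr}\big)\tr+\big(G_{t_k}^{\xi,\tr}-G_{t_k}^{\zeta,\tr}\big)\tr W_k$ and taking the conditional expectation given $\mathcal F_{t_k}$, the martingale cross terms drop out and I would bound the drift--diffusion bracket by Assumption~(H3) (with $x=u^{\xi,\tr}(t_k)$, $\bar x=u^{\zeta,\tr}(t_k)$, $y=u^{\xi,\tr}(t_{k-N})$, $\bar y=u^{\zeta,\tr}(t_{k-N})$) and the order-$\tr^2$ remainder by \eqref{n3.3}, namely $\v F_{t_k}^{\xi,\tr}-F_{t_k}^{\zeta,\tr}\v^2\tr^2\leq 2K^2\tr^{2-2\nu}(\v e_k\v^2+\v e_{k-N}\v^2)$. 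Taking expectations yields the scalar recursion
\[
U_{k+1}\leq(1-p\tr)U_k+q\tr U_{k-N}-b_3\tr\bar V_k+b_4\tr\bar V_{k-N},
\]
with $p:=b_1-2K^2\tr^{1-2\nu}$ and $q:=b_2+2K^2\tr^{1-2\nu}$. By the choice of $\hat\tr_2$ in \eqref{tr4} one has $p-q=(b_1-b_2)-4K^2\tr^{1-2\nu}>0$ for every $\tr\in(0,\hat\tr_2]$, while (H3) gives $b_3>b_4$; these two strict gaps are exactly what drives the decay.

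The core step is a discrete Halanay-type estimate. I would multiply the recursion by $e^{\bar\lambda t_{k+1}}$, use $e^{\bar\lambda t_{k+1}}\leq e^{\bar\lambda t_k}+\bar\lambda\tr e^{\bar\lambda t_{k+1}}$ as in Lemma~\ref{nle1}, and sum over $i=0,\dots,k-1$. Each delayed sum is reindexed through $\sum_{i=0}^{k-1}e^{\bar\lambda t_{i+1}}(\cdot)_{i-N}=e^{\bar\lambda\tau}\sum_{j=-N}^{k-1-N}e^{\bar\lambda t_{j+1}}(\cdot)_j$, which splits into a \emph{history} part over $j\in\{-N,\dots,-1\}$ and a \emph{current} part over $j\geq 0$. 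On the history indices the truncated initial values satisfy $\v u^{\xi,\tr}(t_j)\v\vee\v u^{\zeta,\tr}(t_j)\v\leq M$ and $\v e_j\v\leq 2M$ (again by nonexpansiveness of $\Gamma_{\Phi,\nu}^{\tr}$ and $\xi,\zeta\in B(M)$), so $U_j\leq 4M^2$ and $\bar V_j\leq\sup_{\v x\v\vee\v y\v\leq M}V(x,y)$; together with $\tr N=\tau$ and $e^{\bar\lambda\tau}\leq e^{\tau}$ these produce precisely the two summands of $L_M$. Finally I would fix $\bar\lambda=\bar\lambda(\hat\tr_2)\in(0,1]$ small enough that the current-index weighted sums carry nonpositive net coefficients, i.e. conditions of the form $qe^{\bar\lambda\tau}+\bar\lambda\leq p$ and $b_4e^{\bar\lambda\tau}\leq b_3$; both are solvable because $p>q$ and $b_3>b_4$ strictly. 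Discarding these nonpositive sums leaves $e^{\bar\lambda t_k}U_k\leq L_M$, which is the assertion.

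I expect the principal difficulty to be the simultaneous bookkeeping of the two delayed contributions $q\tr U_{k-N}$ and $b_4\tr\bar V_{k-N}$ under a single exponential weight: the factor $e^{\bar\lambda\tau}$ generated by the index shift must be absorbed into both the drift gap $p-q$ and the Lyapunov gap $b_3-b_4$ at once, so $\bar\lambda$ has to be chosen uniformly in $\tr\in(0,\hat\tr_2]$ (this is why $\bar\lambda$ depends only on $\hat\tr_2$). One must also verify that the leftover current-index sums are genuinely nonpositive so that they can be dropped rather than estimated, and track the $e^{\bar\lambda\tau}\leq e^{\tau}$ and $\tr N=\tau$ factors to recover the exact constant $L_M$; the remainder is the routine summation already carried out in Lemma~\ref{nle1}.
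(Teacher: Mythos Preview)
Your proposal is correct and follows essentially the same route as the paper: expand the pre-truncation squared difference, apply (H3) together with the drift bound \eqref{n3.3}, use nonexpansiveness of $\Gamma_{\Phi,\nu}^{\tr}$ to pass from $\br e_{k+1}$ to $e_{k+1}$, multiply by $e^{\bar\lambda t_{k+1}}$, sum, reindex the delayed terms, and choose $\bar\lambda=\bar\lambda(\hat\tr_2)$ so that the current-index sums carry nonpositive coefficients. The paper's proof differs only cosmetically (it takes full expectations directly rather than conditioning on $\mathcal F_{t_k}$ first, and cites \cite[(7.21)]{Li2018} for the Lipschitz property of the truncation map), and arrives at the same two smallness conditions on $\bar\lambda$ that you identify.
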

\begin{proof}
Fix an $M>0$.
For any  $\tr\in(0,\hat\tr_2]$, $i\geq 0$, and $\xi,\zeta\in B(M)$,
one observes from \eqref{n3.5} that
\begin{align*}
&\v\br u^{\xi,\tr}(t_{i+1})-\br u^{\zeta,\tr}(t_{i+1})\v^2\nn\\
=&\v u^{\xi,\tr}(t_{i})-u^{\zeta,\tr}(t_{i})\v^2+2\langle u^{\xi,\tr}(t_{i})-u^{\zeta,\tr}(t_{i}),F^{\xi,\tr}_{t_i}-F^{\zeta,\tr}_{t_i}\rangle\tr\nn\\
&+\v(G^{\xi,\tr}_{t_i}-G^{\zeta,\tr}_{t_i})\tr W_{i}\v^2+\v F^{\xi,\tr}_{t_i}-F^{\zeta,\tr}_{t_i}\v^2\tr^2\nn\\
&+2\langle u^{\xi,\tr}(t_{i})-u^{\zeta,\tr}(t_{i}),(G^{\xi,\tr}_{t_i}-G^{\zeta,\tr}_{t_i})\tr W_{i}\rangle\nn\\
&+2\langle F^{\xi,\tr}_{t_i}-F^{\zeta,\tr}_{t_i}, (G^{\xi,\tr}_{t_i}-G^{\zeta,\tr}_{t_i})\tr W_{i}\rangle\tr.
\end{align*}
Taking expectations in both sides  of the above inequality , and by using  \textup{(H3)}, and \eqref{n3.3}, leads to
\begin{align}\la{n4.8}
&\E\v \br u^{\xi,\tr}(t_{i+1})-\br u^{\zeta,\tr}(t_{i+1})\v^2\nn\\
=&\E\v u^{\xi,\tr}(t_{i})-u^{\zeta,\tr}(t_{i})\v^2+2\E\langle u^{\xi,\tr}(t_{i})-u^{\zeta,\tr}(t_{i}),F^{\xi,\tr}_{t_i}-F^{\zeta,\tr}_{t_i}\rangle\tr\nn\\
&+\E\v(G^{\xi,\tr}_{t_i}-G^{\zeta,\tr}_{t_i})\v^2\tr+\E\v F^{\xi,\tr}_{t_i}-F^{\zeta,\tr}_{t_i}\v^2\tr^2\nn\\
\leq &\E\v u^{\xi,\tr}(t_{i})-u^{\zeta,\tr}(t_{i})\v^2
-b_1\E\v u^{\xi,\tr}(t_{i})-u^{\zeta,\tr}(t_{i})\v^2\tr\nn\\
&+b_2\E\v u^{\xi,\tr}(t_{i-N})-u^{\zeta,\tr}(t_{i-N})\v ^2\tr
-b_3\E V\big(u^{\xi,\tr}(t_{i}),u^{\zeta,\tr}(t_{i})\big)\tr\nn\\
&+b_4\E V\big(u^{\xi,\tr}(t_{i-N}),u^{\zeta,\tr}(t_{i-N})\big)\tr\nn\\
&+K^2\E\big(\v u^{\xi,\tr}(t_{i})-u^{\zeta,\tr}(t_{i})\v
+\v u^{\xi,\tr}(t_{i-N})-u^{\zeta,\tr}(t_{i-N})\v\big)^2\tr^{2-2\nu}\nn\\
\leq&\E\v u^{\xi,\tr}(t_{i})-u^{\zeta,\tr}(t_{i})\v^2
-(b_1-2K^2\tr^{1-2\nu})\E\v u^{\xi,\tr}(t_{i})-u^{\zeta,\tr}(t_{i})\v^2\tr\nn\\
&+(b_2+2K^2\tr^{1-2\nu})\E\v u^{\xi,\tr}(t_{i-N})-u^{\zeta,\tr}(t_{i-N})\v^2\tr\nn\\
&-b_3\E V\big(u^{\xi,\tr}(t_{i}),u^{\zeta,\tr}(t_{i})\big)\tr+b_4\E V\big(u^{\xi,\tr}(t_{i-N}),u^{\zeta,\tr}(t_{i-N})\big)\tr.
\end{align}
According to the Lipschitz continuity of the truncation mapping $\Gamma^{\tr}_{\Phi,\nu}$  (cf. \cite[ (7.21)]{Li2018})  we arrive at
\begin{align}\la{comp}
\big\v u^{\xi,\tr}(t_{i})- u^{\zeta,\tr}(t_{i})\big\v^2\leq\big\v\br u^{\xi,\tr}(t_{i})-\br u^{\zeta,\tr}(t_{i})\big\v^2.
\end{align}
Making use of inequality $e^{\lambda t_{i+1}}-e^{\lambda t_i}\leq e^{\lambda t_{i+1}}\lambda \tr$ for any $\lambda\in(0,1]$,
we obtain from \eqref{n4.8} and \eqref{comp} that
\begin{align}\la{t4.15}
&e^{\lambda t_{i+1}}\E\v u^{\xi,\tr}(t_{i+1})- u^{\zeta,\tr}(t_{i+1})\v^2
\leq e^{\lambda t_{i+1}}\E\v \br u^{\xi,\tr}(t_{i+1})-\br u^{\zeta,\tr}(t_{i+1})\v^2\nn\\
\leq&e^{\lambda t_{i}}\E\v u^{\xi,\tr}(t_{i})\!-\!u^{\zeta,\tr}(t_{i})\v^2-(b_1
-2K^2\tr^{1-2\nu}-\lambda) e^{\lambda t_{i+1}}\E\v u^{\xi,\tr}(t_{i})-u^{\zeta,\tr}(t_{i})\v^2\tr\nn\\
&+(b_2+2K^2\tr^{1-2\nu})e^{\lambda t_{i+1}}\E\v u^{\xi,\tr}(t_{i-N})-u^{\zeta,\tr}(t_{i-N})\v^2\tr\nn\\
&-b_3e^{\lambda t_{i+1}}\E
V\big(u^{\xi,\tr}(t_{i}),u^{\zeta,\tr}(t_{i})\big)\tr+b_4e^{\lambda t_{i+1}}\E V\big(u^{\xi,\tr}(t_{i-N}),u^{\zeta,\tr}(t_{i-N})\big)\tr.
\end{align}
For any $k\geq0$, summing  \eqref{t4.15} from $i = 0$ to $k$, and together with \eqref{comp} derives
\begin{align}\la{4.16n}
&e^{\lambda t_{k+1}}\E\v u^{\xi,\tr}(t_{k+1})- u^{\zeta,\tr}(t_{k+1})\v^2\nn\\
\leq&\v u^{\xi,\tr}(0)- u^{\zeta,\tr}(0)\v^2
-\big(b_1
-2K^2\tr^{1-2\nu}-\lambda \big)\sum_{i=0}^{k}e^{\lambda t_{i+1}}\E\v u^{\xi,\tr}(t_{i})-u^{\zeta,\tr}(t_{i})\v^2\tr\nn\\
&+(b_2+2K^2\tr^{1-2\nu})\sum_{i=0}^{k}e^{\lambda t_{i+1}}\E\v u^{\xi,\tr}(t_{i-N})-u^{\zeta,\tr}(t_{i-N})\v^2\tr\nn\\
&-b_3\sum_{i=0}^{k}e^{ \lambda t_{i+1}}\E V\big(u^{\xi,\tr}(t_{i}),u^{\zeta,\tr}(t_{i})\big)\tr\nn\\
&+b_4\sum_{i=0}^{k}e^{\lambda t_{i+1}}\E V\big(u^{\xi,\tr}(t_{i-N}),u^{\zeta,\tr}(t_{i-N})\big)\tr\nn\\
\leq &L_{M}-\big(b_1-2K^2\tr^{1-2\nu}-\lambda-(b_2+2K^2\tr^{1-2\nu})e^{\lambda\tau}\big)\nn\\&\times\sum_{i=0}^{k}e^{\lambda t_{i+1}}\E
\v u^{\xi,\tr}(t_{i})-u^{\zeta,\tr}(t_{i})\v^2\tr\nn\\
&-\big(b_3-b_4e^{\lambda\tau}\big)\sum_{i=0}^{k}e^{\lambda t_{i+1}}\E
V\big(u^{\xi,\tr}(t_{i}),u^{\zeta,\tr}(t_{i})\big)\tr.
\end{align}
Making use of \eqref{tr4} and $b_3>b_4$, we choose a $\bar \lambda=\bar \lambda(\hat\tr_2)\in(0,1]$ sufficiently small such that
\begin{align*}
&b_1- 2K^2\hat\tr_2^{1-2\nu}-\bar \lambda-(b_2+2K^2\hat\tr_2^{1-2\nu})e^{\bar \lambda\tau}\geq0~~\hbox{and}~~b_3-b_4e^{\bar \lambda\tau}\geq0.
\end{align*}
Taking $\lambda=\bar\lambda$ in \eqref{4.16n} implies that for any $\tr\in(0,\hat\tr_2]$, $k\geq 0$, and $\xi,\zeta\in B(M)$,
\begin{align*}
\E\v u^{\xi,\tr}(t_{k+1})-u^{\zeta,\tr}(t_{k+1})\v^2\leq L_{M}e^{-\bar \lambda t_{k+1}}.
\end{align*}
The proof is therefore complete.
\end{proof}
Now we formulate the key proposition, which plays an important
role in the analysis of the existence and uniqueness of numerical invariant measures.
\begin{proposition}\la{lemma5}
Suppose that \textup{(H1)--(H3)} hold.
Let $\hat\tr=\hat\tr_1\wedge\hat\tr_2$.
Then for any  $M>0$, $\e>0$, there exists a $\bar T=\bar T(\hat \tr,  M, \e)>\tau$
such that for any $\tr\in(0,\hat\tr]$ and $k\tr\geq \bar T$,
\begin{align}\la{4.24n}
\sup_{\xi,\zeta\in  B(M)}\PP\Big\{\|Y_{t_k}^{\xi,\tr}-Y_{t_k}^{\zeta,\tr}\|\geq\e\Big\}<\e.
\end{align}
\end{proposition}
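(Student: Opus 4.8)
The plan is to reduce the functional attraction \eqref{4.24n} to the grid-point attraction already furnished by Lemma \ref{4.4nle}. The key starting observation is that on each subinterval the interpolation \eqref{n3.7} is affine, so for $t\in[t_i,t_{i+1}]$ the difference $y^{\xi,\tr}(t)-y^{\zeta,\tr}(t)$ is a convex combination of $u^{\xi,\tr}(t_i)-u^{\zeta,\tr}(t_i)$ and $u^{\xi,\tr}(t_{i+1})-u^{\zeta,\tr}(t_{i+1})$. Taking the supremum over $\theta\in[-\tau,0]$ and recalling $\tau=N\tr$ yields the pathwise bound
\begin{align*}
\|Y_{t_k}^{\xi,\tr}-Y_{t_k}^{\zeta,\tr}\|\leq \max_{k-N\leq i\leq k}\big\v u^{\xi,\tr}(t_i)-u^{\zeta,\tr}(t_i)\big\v,
\end{align*}
so it suffices to control the maximum of the grid-point differences over the delay window $[t_{k-N},t_k]$. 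I would then split the admissible step sizes into two regimes.

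For coarse steps $\tr\in(\tr_4,\hat\tr]$ the number of grid points $N=\tau/\tr$ in the window is bounded by the constant $\tau/\tr_4$, independent of $\tr$, so a crude union bound together with Chebyshev's inequality and Lemma \ref{4.4nle} (using $t_i\geq t_k-\tau$) gives
\begin{align*}
\sup_{\xi,\zeta\in B(M)}\PP\Big\{\|Y_{t_k}^{\xi,\tr}-Y_{t_k}^{\zeta,\tr}\|\geq\e\Big\}\leq \frac{(N+1)L_M e^{\bar\lambda\tau}}{\e^2}\,e^{-\bar\lambda t_k},
\end{align*}
which is driven below $\e$ once $t_k$ exceeds a threshold $\bar T_2$, thanks to the exponential factor $e^{-\bar\lambda t_k}$.

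The delicate regime is $\tr\in(0,\tr_4]$, where $N\to\infty$ as $\tr\to0$ and the crude union bound diverges. Here I would sub-sample the window by the $\tr$-independent points $s_\ell:=t_{k-N}+\ell\tau/j^*$, $\ell=0,\dots,j^*$, where $j^*$ and $\tr_4$ are those supplied by Lemma \ref{le8s} taken with oscillation parameters $\e/3,\e/3$. For each $i$ pick $\ell$ with $\v t_i-s_\ell\v\leq\tau/j^*$ and insert $s_\ell$ by the triangle inequality, so that
\begin{align*}
\|Y_{t_k}^{\xi,\tr}-Y_{t_k}^{\zeta,\tr}\|\leq \omega^{\xi}+\omega^{\zeta}+\max_{0\leq\ell\leq j^*}\big\v y^{\xi,\tr}(s_\ell)-y^{\zeta,\tr}(s_\ell)\big\v,
\end{align*}
where $\omega^{\xi}$ is the oscillation of $y^{\xi,\tr}$ over time increments $\leq\tau/j^*$ inside $[t_{k-N},t_k]$. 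Each oscillation term is kept below $\e/3$ with probability at least $1-\e/3$ by Lemma \ref{le8s} applied on the window $[t_{k-N},t_{k-N}+\tau]=[t_{k-N},t_k]$, which is legitimate because $t_k\geq\bar T>\tau$ forces $k>N$. For the middle term, convexity of the interpolation again bounds $\v y^{\xi,\tr}(s_\ell)-y^{\zeta,\tr}(s_\ell)\v$ by grid-point differences at times $\geq t_{k-N}=t_k-\tau$, so Lemma \ref{4.4nle}, a union bound over the $j^*+1$ sampling points, and Chebyshev's inequality give
\begin{align*}
\PP\Big\{\max_{0\leq\ell\leq j^*}\big\v y^{\xi,\tr}(s_\ell)-y^{\zeta,\tr}(s_\ell)\big\v\geq\e/3\Big\}\leq\frac{9(j^*+1)L_M e^{\bar\lambda\tau}}{\e^2}\,e^{-\bar\lambda t_k},
\end{align*}
which drops below $\e/3$ for $t_k\geq\bar T_1$. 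Since $\{\|Y^{\xi,\tr}_{t_k}-Y^{\zeta,\tr}_{t_k}\|\geq\e\}\subseteq\{\omega^\xi\geq\e/3\}\cup\{\max_\ell\geq\e/3\}\cup\{\omega^\zeta\geq\e/3\}$, the three contributions sum to strictly less than $\e$.

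Taking $\bar T:=\bar T_1\vee\bar T_2\vee 2\tau$ then yields \eqref{4.24n} for all $\tr\in(0,\hat\tr]$ and $k\tr\geq\bar T$, uniformly in $\xi,\zeta\in B(M)$. I expect the fine-step regime to be the main obstacle: because the number of grid points in a delay window blows up as $\tr\to0$, one cannot union-bound over grid points directly, and the entire argument hinges on replacing them by a fixed number $j^*+1$ of sampling points at the cost of the modulus-of-continuity-in-probability estimate of Lemma \ref{le8s}. The exponential rate $\bar\lambda$ from Lemma \ref{4.4nle} is then exactly what converts the pointwise smallness into genuine attraction as $t_k\to\infty$.
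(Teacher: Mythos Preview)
Your proposal is correct and follows essentially the same strategy as the paper: split into a coarse-step regime $\tr\in[\tr_4,\hat\tr]$ (union bound over at most $\tau/\tr_4$ grid points, Chebyshev, Lemma \ref{4.4nle}) and a fine-step regime $\tr\in(0,\tr_4]$ (sub-sample the delay window at $j^*$ fixed points, control the oscillation via Lemma \ref{le8s}, and control the sub-sampled differences via the interpolation convexity plus Lemma \ref{4.4nle}). The paper's decomposition in \eqref{4.46nnn} is organised slightly differently---it decomposes $\v y^{\xi,\tr}(t)-y^{\zeta,\tr}(t)\v$ directly rather than first passing through the grid-point maximum---but the three-term triangle inequality, the choice of $j^*$ and $\tr_4$ from Lemma \ref{le8s}, and the use of Lemma \ref{4.4nle} with a $j^*$-fold union bound are identical in substance.
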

\begin{proof}
Let $\hat\tr=\hat\tr_1\wedge\hat\tr_2$. For any $M>0$, $\e_1>0$, $\e_2>0$, by virtue of Lemma \ref{le8s} there exist $\tr_4=\tr_4(M, \e_1/6,\e_2/9)\in(0,\hat\tr\wedge\tr_{M}^*]$ and $j^*=j^*(M, \e_1/3,\e_2/3, \tr_4)\geq1$
such that
\begin{align}\la{4.26n}
\sup_{\tr\in(0,\tr_4]}\sup_{k\geq 0}\sup_{\xi\in  B(M)}\PP\Big\{ \sup_{\substack{\v s_1-s_2\v\leq\tau/ j^*\\ s_1,s_2\in[t_{k},t_{k}+\tau]}}\big\v y^{\xi,\tr}(s_1)-y^{\xi,\tr}(s_2)\big\v
\geq\frac{\e_2}{3}\Big\}<\frac{\e_1}{3}.
\end{align}
It follows from  \eqref{n3.7} that for any  $\tr\in(0,\tr_4]$ and $k\geq N$,
\begin{align}\la{le49+1}
&\sup_{\xi,\zeta\in  B(M)}\sup_{t\in[t_{k}-\tau,t_{k}]}\PP\Big\{ \v y^{\xi,\tr}(t)-y^{\zeta,\tr}(t)\v\geq\frac{\e_2}{3}\Big\}\nn\\
\leq&\sup_{\xi,\zeta\in  B(M)}\sup_{i\in\{k-N,\cdots, k-1\}}\sup_{t\in[t_{i},t_{i+1}]}\PP\Big\{\frac{t_{i+1}-t}{\tr}\v u^{\xi,\tr}(t_{i})-u^{\zeta,\tr}(t_{i})\v\geq \frac{\e_2}{6}\Big\}\nn\\
&+\sup_{\xi,\zeta\in  B(M)}\sup_{i\in\{k-N,\cdots, k-1\}}\sup_{t\in[t_{i},t_{i+1}]}\PP\Big\{  \frac{t-t_i}{\tr}\v u^{\xi,\tr}(t_{i+1})-u^{\zeta,\tr}(t_{i+1})\v\geq\frac{\e_2}{6}\Big\}\nn\\
\leq&2\sup_{\xi,\zeta\in  B(M)}\sup_{i\in\{k-N,\cdots, k\}}\PP\Big\{ \v u^{\xi,\tr}(t_{i})-u^{\zeta,\tr}(t_{i})\v\geq \frac{\e_2}{6}\Big\}.
\end{align}
It follows from  the Chebyshev inequality and Lemma \ref{4.4nle} that
\begin{align}\la{le49+2}
&\sup_{i\in\{k-N,\cdots, k\}}\sup_{\xi,\zeta\in  B(M)}\PP\Big\{ \v u^{\xi,\tr}(t_{i})-u^{\zeta,\tr}(t_{i})\v\geq \frac{\e_2}{6}\Big\}\nn\\
\leq&\frac{36}{\e_2^2}\sup_{i\in\{k-N,\cdots, k\}}\sup_{\xi,\zeta\in  B(M)}\E\v u^{\xi,\tr}(t_{i})\!-u^{\zeta,\tr}(t_{i})\v^2
\leq\frac{36 L_{M}e^{-\bar \lambda (t_{k}-\tau)}}{\e_2^2}.
\end{align}
Choose a $T_1=T_1(M, \e_1,\e_2, \tr_4, j^*)>\tau$ sufficiently large such that
\begin{align}\la{le49+3}
\frac{36 L_Me^{-\bar \lambda (T_1-\tau)}}{\e_2^2}<\frac{\e_1}{6j^*}.
\end{align}
Inserting \eqref{le49+2} and \eqref{le49+3} into \eqref{le49+1} implies
\begin{align}\la{4.27n}
\sup_{\tr\in(0,\tr_4]}\sup_{k\tr\geq T_1}\sup_{\xi,\zeta\in  B(M)}\sup_{t\in[t_{k}-\tau,t_{k}]}\PP\Big\{ \v y^{\xi,\tr}(t)-y^{\zeta,\tr}(t)\v\geq\frac{\e_2}{3}\Big\}
<\frac{\e_1}{3j^*}.
\end{align}
According to \eqref{4.26n} and \eqref{4.27n} yields
\begin{align}\la{4.46nnn}
&\sup_{\tr\in(0,\tr_4]}\sup_{k\tr\geq T_1}\sup_{\xi,\zeta\in  B(M)}\PP\Big\{ \|Y_{t_k}^{\xi,\tr}-Y_{t_k}^{\zeta,\tr}\|\geq\e_2\Big\}\nn\\
=&\sup_{\tr\in(0,\tr_4]}\sup_{k\tr\geq T_1}\sup_{\xi,\zeta\in  B(M)}\PP\Big\{\sup_{0\leq j\leq j^*-1}\sup_{t\in[t_{k-N}^{j^*,j},t_{k-N}^{j^*,j+1}]} \v y^{\xi,\tr}(t)-y^{\zeta,\tr}(t)\v\geq\e_2\Big\}\nn\\
\leq&2\sup_{\tr\in(0,\tr_4]}\sup_{k\tr\geq T_1}\sup_{\xi\in  B(M)}\PP\Big\{ \sup_{\substack{\v s_1-s_2\v\leq\tau/j^*\\s_1,s_2\in[t_{k}-\tau,t_{k}]}} \v y^{\xi,\tr}(s_1)-y^{\xi,\tr}(s_2)\v\geq\frac{\e_2}{3}\Big\}\nn\\
&+j^*\sup_{\tr\in(0,\tr_4]}\sup_{k\tr\geq  T_1}\sup_{\xi,\zeta\in B(M)}\sup_{t\in[t_{k}-\tau,t_{k}]}\PP\Big\{ \v y^{\xi,\tr}(t)-y^{\zeta,\tr}(t)\v\geq\frac{\e_2}{3}\Big\}<\e_1.
\end{align}
where $t_k^{j^*,j}=t_k+j\tau/j^*$.
On the other hand, for any $\tr\in[\tr_4,~ \hat\tr]$ and $k\geq N$, it follows from \eqref{n3.7} that
\begin{align*}
&\sup_{\xi,\zeta\in  B(M)}\PP\Big\{ \|Y_{t_k}^{\xi,\tr}-Y_{t_k}^{\zeta,\tr}\|\geq\e_2\Big\}\nn\\
=&\sup_{\xi,\zeta\in  B(M)}\PP\Big\{\sup_{i\in\{k-N,\cdots, k-1\}}\sup_{t\in[t_i,t_{i+1}]} \v y^{\xi,\tr}(t)-y^{\zeta,\tr}(t)\v\geq \e_2\Big\}\nn\\
\leq&\sum_{i=k-N}^{k-1}\sup_{\xi,\zeta\in  B(M)}\PP\Big\{\sup_{t\in[t_i,t_{i+1}]} \frac{t_{i+1}-t}{\tr}\v u^{\xi,\tr}(t_i)-u^{\zeta,\tr}(t_{i})\v
\geq\frac{\e_2}{2}\Big\}\nn\\
&+\sum_{i=k-N}^{k-1}\sup_{\xi,\zeta\in B(M)}\PP\Big\{\sup_{t\in[t_i,t_{i+1}]}\frac{t-t_i}{\tr}\v u^{\xi,\tr}(t_{i+1})-u^{\zeta,\tr}(t_{i+1})\v\geq \frac{\e_2}{2}\Big\}\nn\\
\leq& 2N\sup_{i\geq k-N}\sup_{\xi,\zeta\in  B(M)}\PP\Big\{\v u^{\xi,\tr}(t_i)-u^{\zeta,\tr}(t_{i})\v
\geq\frac{\e_2}{2}\Big\}.
\end{align*}
Choose a $T_2=T_2(M, \e_1,\e_2,\tr_4)>\tau$ sufficiently large such that
$$\frac{8\tau L_Me^{-\bar \lambda (T_2-\tau)}}{\e_2^2\tr_4}<\e_1.$$
Making use of  the Chebyshev inequality and Lemma \ref{4.4nle}  we arrive at
 \begin{align}\la{4.47nnn}
&\sup_{\tr\in[\tr_4, \hat\tr]}\sup_{k\tr\geq T_2}\sup_{\xi,\zeta\in  B(M)}\PP\Big\{ \|Y_{t_k}^{\xi,\tr}-Y_{t_k}^{\zeta,\tr}\|\geq\e_2\Big\}\nn\\
\leq&\frac{8N}{\e_2^2}\sup_{i\geq k-N}\sup_{\xi,\zeta\in B(M)}\E\v u^{\xi,\tr}(t_i)-u^{\zeta,\tr}(t_{i})\v^2
\leq\frac{8\tau L_Me^{-\bar \lambda (T_2-\tau)}}{\e_2^2\tr_4}<\e_1.
\end{align}
Let $\bar T=T_1\vee T_2$ and $\e_1=\e_2=\e$. According to  \eqref{4.46nnn}, and \eqref{4.47nnn} yields
\begin{align*}
&\sup_{\tr\in(0, \hat\tr]}\sup_{k\tr\geq \bar T}\sup_{\xi,\zeta\in B(M)}\PP\big\{\|Y_{t_k}^{\xi,\tr}-Y_{t_k}^{\zeta,\tr}\|\geq\e\big\}\nn\\
\leq&\max\Big\{\sup_{\tr\in(0,\tr_4]}\sup_{k\tr\geq T_1}\sup_{\xi,\zeta\in B(M)}\PP\big\{\|Y_{t_k}^{\xi,\tr}-Y_{t_k}^{\zeta,\tr}\|\geq\e_2\big\},\nn\\
&\sup_{\tr\in[\tr_4, \hat\tr]}\sup_{k\tr\geq T_2}\sup_{\xi,\zeta\in B(M)}\PP\big\{\|Y_{t_k}^{\xi,\tr}-Y_{t_k}^{\zeta,\tr}\|\geq\e_2\big\}\Big\}<\e
\end{align*}
as required.
\end{proof}
\begin{proofs}\emph{Proof of Theorem \textup{\ref{nth3}}}
 Since the proof is rather technical, we divide it into two steps.

\textbf{Step 1}. 
Firstly, choose a special initial data $\xi=\textbf 0$.
For any $\e\in(0,1)$,
in view of \eqref{n4.1} and Proposition \ref{leerry},  there exists a positive constant $\Lambda^*=\Lambda^*(\hat\tr, 0, \e/8)$ such that
\begin{align}\la{th3.1Le46+1}
\sup_{\tr\in(0,\hat\tr]}\sup_{i\geq0}\mu_{t_i}^{\textbf 0,\tr}\big(B^{c}(\Lambda^*)\big)=\sup_{\tr\in(0,\hat\tr]}\sup_{i\geq0}\PP\Big\{\|Y^{\textbf 0,\tr}_{t_i}\|> \Lambda^*\Big\}<\frac{\e}{8}.
\end{align}
By virtue of  Proposition \ref{lemma5},  there exists a $\bar T=\bar T(\hat\tr, \Lambda^*, \e/4)>\tau$ such that
for any $\tr\in(0,\hat\tr]$ and $k\tr\geq \bar T$,
\begin{align}\la{4.32n}
\sup_{X\in  B(\Lambda^*)}\PP\Big\{\|Y^{X,\tr}_{t_k}
-Y^{\textbf 0,\tr}_{t_k}\|\geq\frac{\e}{4}\Big\}<\frac{\e}{4}.
\end{align}
For any $\tr\in(0,\hat \tr]$, $t_k\geq \bar T$, and $i\geq0$, recalling definitions \eqref{n2.1} and \eqref{n4.1}, we obtain
\begin{align*}
&d_{\Xi}\big(\mu_{t_{k+i}}^{\textbf 0,\tr}(\cdot),\mu_{t_{k}}^{\textbf 0,\tr}(\cdot)\big)=\sup_{\Psi\in\Xi}\big\v\E\Psi(Y^{\textbf 0,\tr}_{t_{k+i}})-\E\Psi(Y^{\textbf 0,\tr}_{t_k})\big\v\nn\\
=&\sup_{\Psi\in\Xi}\big\v\E\big(\E\big(\Psi(Y^{\textbf 0,\tr}_{t_{k+i}})
\v\mathcal F_{t_i}\big)\big)-\E\Psi(Y^{\textbf 0,\tr}_{t_k})\big\v\nn\\
=&\sup_{\Psi\in\Xi}\big\v\E\big(\E\Psi(Y^{X,\tr}_{t_k})\v_{X=Y^{\textbf 0,\tr}_{t_i}}\big)-\E\Psi(Y^{\textbf 0,\tr}_{t_k})\big\v\nn\\
\leq&\sup_{\Psi\in\Xi}\int_{C}
\E\big\v\Psi(Y^{X,\tr}_{t_k})-\Psi(Y^{\textbf 0,\tr}_{t_k})\big\v\mu_{t_i}^{\textbf 0,\tr}(\mathrm d X)\nn\\
\leq&\int_{C}\E(2\wedge \|Y^{X,\tr}_{t_k}-Y^{\textbf 0,\tr}_{t_k}\|)\mu_{t_i}^{\textbf 0,\tr}(\mathrm dX).
\end{align*}
Making use of \eqref{th3.1Le46+1} and \eqref{4.32n} yields
\begin{align}\la{n3.48}
d_{\Xi}\big(\mu_{t_{k+i}}^{\textbf 0,\tr}(\cdot),\mu_{t_{k}}^{\textbf 0,\tr}(\cdot)\big)
\leq&\int_{B(\Lambda^*)}\E(2\wedge \|Y^{ X,\tr}_{t_k}-Y^{\textbf 0,\tr}_{t_k}\|)\mu_{t_i}^{\textbf 0,\tr}(\mathrm d X)+\frac{\e}{4}\nn\\
\leq&2\int_{B(\Lambda^*)}\PP\big\{\|Y^{ X,\tr}_{t_k}
-Y^{\textbf 0,\tr}_{t_k}\|\geq\frac{\e}{4}\big\}\mu_{t_i}^{\textbf 0,\tr}(\mathrm dX)+\frac{\e}{4}+\frac{\e}{4}\nn\\
\leq&2\sup_{X\in  B(\Lambda^*)}\PP\Big\{\|Y^{X,\tr}_{t_k}
-Y^{\textbf 0,\tr}_{t_k}\|\geq\frac{\e}{4}\Big\}+\frac{\e}{4}+\frac{\e}{4}<\e.
\end{align}
This implies that the measure sequence $\{\mu_{t_k}^{\textbf 0,\tr}(\cdot)\}_{k\geq0}$ is uniformly Cauchy.
Since $(\mathcal P(C), d_{\Xi})$  is complete (see \cite[Corollary 10.5]{Dudley}),
there exists a unique probability measure $\pi^{\tr}(\cdot)$
  such that
\begin{align}\la{4.34n}
\lim_{t_k\rightarrow \infty}d_{\Xi}\big(\mu_{t_k}^{\textbf 0,\tr}(\cdot),\pi^{\tr}(\cdot)\big)=0,~~~\hbox{uniformly~in~} \tr\in(0,\hat\tr].
\end{align}
\textbf {Step 2}. 
Let $\xi\in\mathcal C^{\alpha}_{\mathcal F_0}$.
It is
straightforward to  see that
$
\|\xi\|<\infty,~ \PP-\hbox{a.s}.
$
Hence for any $\e>0$ there exists an $M>0$ sufficiently large such that
	\begin{align}\la{set}
		P_{\xi}(B^c(M))=\PP\Big\{ \|\xi\|>M\Big\}
		<\frac{\e}{8},
	\end{align}
where
\begin{align*}
P_{\xi}(A):=\PP\Big\{\omega\in\Omega: \xi(\omega)\in A\Big\}, ~~~\forall~ A\in\mathfrak B(C).
\end{align*}
By  Proposition \ref{lemma5} there exists a $\bar T_1=\bar T_1(\hat\tr, M, \e/4)>\tau$   sufficiently large such that  for any $\tr\in(0,\hat\tr]$ and $k\tr\geq\bar T_1$,
\begin{align}\la{4.28n}
\sup_{X\in B(M)}\PP\Big\{\|Y^{X,\tr}_{t_k}-Y^{\textbf 0,\tr}_{t_k}\|\geq\frac{\e}{4}\Big\}
<\frac{\e}{4}.
\end{align}
For any  $\tr\in(0,\hat\tr]$ and $k\tr\geq\bar T_1$,
using \eqref{set} and \eqref{4.28n} implies
\begin{align*}
&d_{\Xi}\big(\mu_{t_k}^{\xi,\tr}(\cdot),\mu_{t_k}^{\textbf 0,\tr}(\cdot)\big)
\leq\E\Big(\E\big(2\wedge \|Y^{X,\tr}_{t_k}-Y^{\textbf 0,\tr}_{t_k}\|\big)\big\v_{\{X=\xi\}}\Big)\nn\\
=&\int_{ B(M)}\E\big(2\wedge \|Y^{X,\tr}_{t_k}-Y^{\textbf 0,\tr}_{t_k}\|\big)P_{\xi}(\mathrm dX)\nn\\&+\int_{B^c(M)}\E\big(2\wedge \|Y^{X,\tr}_{t_k}-Y^{\textbf 0,\tr}_{t_k}\|\big)P_{\xi}(\mathrm dX)\nn\\
\leq&2\int_{ B(M)}\PP\big\{\|Y^{X,\tr}_{t_k}-Y^{\textbf 0,\tr}_{t_k}\|
\geq\frac{\e}{4}\big\}P_{\xi}(\mathrm dX)+\frac{\e}{4}+2P_{\xi}(B^c(M))<\e.
\end{align*}
Therefore, for any $\xi\in \mathcal C_{\mathcal F_0}^{\alpha}$,
\begin{align}\la{4.29n}
\lim_{t_k\rightarrow \infty}d_{\Xi}\big(\mu_{t_k}^{\xi,\tr}(\cdot),\mu_{t_k}^{\textbf 0,\tr}(\cdot)\big)=0,~~~\hbox{uniformly ~in~}\tr\in(0,\hat\tr].
\end{align}
It follows from \eqref{4.34n} and \eqref{4.29n} that
\begin{align*}
&\lim_{t_k\rightarrow \infty}d_{\Xi}(\mu_{t_k}^{\xi,\tr}(\cdot),\pi^{\tr}(\cdot))\nn\\
\leq& \lim_{t_k\rightarrow \infty}d_{\Xi}(\mu_{t_k}^{\textbf 0,\tr}(\cdot),\pi^{\tr}(\cdot))+\lim_{t_k\rightarrow \infty}\big(\mu_{t_k}^{\xi,\tr}(\cdot),\mu_{t_k}^{\textbf 0,\tr}(\cdot)\big)\nn\\
=&0,~~~\hbox{uniformly ~in~}\tr\in(0,\hat\tr].
\end{align*}
The required assertion \eqref{th3+1} follows.
 By the similar way as Step $2$, for any $M>0$, we may also prove that the convergence in \eqref{th3+1} is also uniform for the initial data $\xi\in \mathbb{B}( M,\alpha)$.
The proof is complete.
\end{proofs}

Next, we give the convergence  between the numerical segment process $Y^{\textbf 0,\tr}_{t_k}$ and the exact one $ x^{\textbf 0}_{t_k} $.
\begin{lemma}\la{noth2}
Suppose  that $(\textup{H}1)$ and $(\textup{H}2)$ hold.
Then
for any  $\e>0$, $T>0$,  there exists a $\tr_5=\tr_5(\e,T)\in(0,\hat\tr_1]$ such that 
\begin{align*}
\sup_{\tr\in(0,\tr_5]}\sup_{0\leq k\tr\leq T}\PP\Big\{\|Y^{{\textbf{\textup 0}},\tr}_{t_k}-x^{{\textbf{\textup 0}}}_{t_{k}}\|\geq\e\Big\}<\e.
\end{align*}
\end{lemma}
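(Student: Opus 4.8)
The plan is to reduce the $C$-valued (sup-norm) error to a supremum over the fixed interval $[0,T]$, split it into an interpolation error, a step error, and the genuine discretisation error, and control each in probability, the last by invoking the $\RR^d$-valued strong convergence of \cite{Song-Li2021}.

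First I would exploit the special initial datum $\xi=\textbf 0$. Since $\Gamma_{\Phi,\nu}^{\tr}(\textbf 0)=\textbf 0$, all of $y^{\textbf 0,\tr}$, $z^{\textbf 0,\tr}$, $u^{\textbf 0,\tr}$ and $x^{\textbf 0}$ vanish on $[-\tau,0]$. Hence for every $k$ with $0\le k\tr\le T$ we have $\|Y^{\textbf 0,\tr}_{t_k}-x^{\textbf 0}_{t_k}\|=\sup_{s\in[t_k-\tau,t_k]}\v y^{\textbf 0,\tr}(s)-x^{\textbf 0}(s)\v\le\sup_{0\le s\le T}\v y^{\textbf 0,\tr}(s)-x^{\textbf 0}(s)\v$, so it suffices to make the right-hand supremum small in probability for all $\tr\in(0,\tr_5]$; the resulting bound is then automatically uniform in $k$, which yields the $\sup_{0\le k\tr\le T}$ assertion.

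Next I would use the triangle inequality $\v y^{\textbf 0,\tr}(s)-x^{\textbf 0}(s)\v\le\v y^{\textbf 0,\tr}(s)-z^{\textbf 0,\tr}(s)\v+\v z^{\textbf 0,\tr}(s)-u^{\textbf 0,\tr}(s)\v+\v u^{\textbf 0,\tr}(s)-x^{\textbf 0}(s)\v$ and allot $\e/3$ to each summand (note $\textbf 0\in B(1)$, so all earlier lemmas apply). For the interpolation term I would cover $[0,T]$ by the blocks $[t_{jN},t_{jN}+\tau]$, $j=0,\dots,\lceil T/\tau\rceil$, and apply Lemma \ref{nole4} on each block with per-block tolerance $\e/(3(\lceil T/\tau\rceil+1))$; the union bound keeps $\PP\{\sup_{0\le s\le T}\v y^{\textbf 0,\tr}(s)-z^{\textbf 0,\tr}(s)\v\ge\e/3\}$ small. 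For the step term, on $[t_k,t_{k+1})$ one has $z^{\textbf 0,\tr}(s)-u^{\textbf 0,\tr}(s)=z^{\textbf 0,\tr}(s)-z^{\textbf 0,\tr}(t_k)=\int_{t_k}^{s}F^{\textbf 0,\tr}_h\,\mathrm dh+\int_{t_k}^{s}G^{\textbf 0,\tr}_h\,\mathrm dW(h)$; I would localise once by the stopping time $\beta_{\tr_3,0}^{\textbf 0,\tr}$ of \eqref{3.20n}, on whose event $\{\beta_{\tr_3,0}^{\textbf 0,\tr}\ge T\}$ the coefficients are bounded by a constant on $[0,T]$ (by (H1)), so the Burkholder--Davis--Gundy and Doob inequalities, exactly as in Lemma \ref{le8s}, give a fourth-moment bound of order $\tr^2$ for each step; a union bound over the $\lfloor T/\tr\rfloor+1$ steps then yields a total of order $T\tr$, while $\PP\{\beta_{\tr_3,0}^{\textbf 0,\tr}<T\}$ is made small by Lemma \ref{nle3}, so $\PP\{\sup_{0\le s\le T}\v z^{\textbf 0,\tr}(s)-u^{\textbf 0,\tr}(s)\v\ge\e/3\}$ is small. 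Finally, the discretisation term is handled by the strong convergence of \cite{Song-Li2021}: since $\E\sup_{0\le s\le T}\v u^{\textbf 0,\tr}(s)-x^{\textbf 0}(s)\v^2\to0$ as $\tr\to0$, Chebyshev's inequality makes $\PP\{\sup_{0\le s\le T}\v u^{\textbf 0,\tr}(s)-x^{\textbf 0}(s)\v\ge\e/3\}$ small. Taking $\tr_5$ below the three thresholds produced above completes the argument.

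The main obstacle is the passage from the finite-dimensional strong $L^2$ convergence in $\RR^d$ to a uniform-in-$k$ convergence in probability of the infinite-dimensional, $C$-valued segment process. The crux is controlling the continuous-time oscillations, namely the interpolation gap $y^{\textbf 0,\tr}-z^{\textbf 0,\tr}$ and the step gap $z^{\textbf 0,\tr}-u^{\textbf 0,\tr}$, uniformly over the whole horizon: both require union bounds over $O(1/\tr)$ mesh points, and these survive the limit $\tr\to0$ only because the truncation exponent obeys $\nu\le1/3<1/2$, which supplies the extra power of $\tr$ needed. Moreover, because the super-linear coefficients would otherwise make the increment moments blow up, the preliminary localisation by $\beta_{\tr_3,0}^{\textbf 0,\tr}$ furnished by Lemma \ref{nle3} is indispensable before any such moment estimate can be taken.
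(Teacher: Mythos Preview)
Your argument is correct and close in spirit to the paper's, but the route differs in two places. The paper uses only a \emph{two}-term split $\v y^{\textbf 0,\tr}-x^{\textbf 0}\v\le\v y^{\textbf 0,\tr}-z^{\textbf 0,\tr}\v+\v z^{\textbf 0,\tr}-x^{\textbf 0}\v$ and invokes (an adaptation of) \cite[Theorem~3.3]{Song-Li2021} directly for the second term in the localised form $\E\big(\sup_{[0,T]}\v z^{\textbf 0,\tr}-x^{\textbf 0}\v^2\textbf 1_{\{\gamma\ge T\}}\big)\to 0$, where $\gamma=\delta_\ell^{\textbf 0}\wedge\beta_{\tr_3,-N}^{\textbf 0,\tr}$ also involves an exit time for the exact solution; you instead insert the intermediate $\v z^{\textbf 0,\tr}-u^{\textbf 0,\tr}\v$ term, handle it by hand via BDG plus a union bound over $O(T/\tr)$ steps, and then call the unlocalised strong convergence of $u^{\textbf 0,\tr}$ to $x^{\textbf 0}$. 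Second, the paper does \emph{not} pass to $\sup_{[0,T]}$: it keeps each segment $[t_k-\tau,t_k]$ separate, so a single application of Lemma~\ref{nole4} suffices for the interpolation piece, whereas your global reduction forces a union bound over $\lceil T/\tau\rceil+1$ blocks. The paper's route is a little leaner (no block-union bound, no separate step-error analysis), while yours is more modular: it isolates the three distinct error sources and uses the strong-convergence black box from \cite{Song-Li2021} in its cleanest form, at the price of the extra union bounds and the explicit BDG computation that the paper absorbs into its citation.
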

\begin{proof}\textbf{Proof.}
Without loss of generality, for any  $\e>0$,  $T>\tau$,
by Lemma \ref{nle3} there is a $\tr_3=\tr_3(0, \e/4,T)\in(0,\hat\tr_1]$ such that
\begin{align}\la{n3.30}
\sup_{\tr\in(0,\tr_3]}\PP\Big\{\beta_{\tr_3,-N}^{{\textbf 0},\tr}<T\Big\}<\frac{\e}{4},
\end{align}
where $\beta_{\tr_3,-N}^{{\textbf 0},\tr}$ is given by \eqref{3.20n}.
In view of Lemma \ref{nole4},  choose a $\tr_4=\tr_4(0, \e/8, \e/2)\in(0,\tr_3]$ sufficiently small  such that
\begin{align}\la{n3.31}
\sup_{\tr\in(0,\tr_4]}\sup_{k\geq0}\PP\Big\{\sup_{t\in[t_k,t_{k}+\tau]}\v y^{{\textbf 0},\tr}(t)-z^{{\textbf 0},\tr}(t)\v \geq\frac{\e}{2}\Big\}<\frac{\e}{8}.
\end{align}
For any positive constant $\ell$, define
\begin{align*}
\delta_{\ell}^{{\textbf 0}}=\inf\Big\{t\geq-\tau: \v x^{{\textbf 0}}(t)\v> \ell\Big\}.
\end{align*}
By virtue of \cite[Theorem 2.1]{Song-Li2021} we have
\begin{align*}
\PP\Big\{\delta_{\ell}^{{\textbf 0}}< T\Big\}\leq\frac{L}{\ell^2},
\end{align*}
Choose an $\ell$ sufficiently large such that
\begin{align}\la{delta}
\PP\Big\{\delta_{\ell}^{{\textbf 0}}< T\Big\}<\frac{\e}{4}.
\end{align}
Let $\gamma^{\tr_3,\tr}_{{\textbf 0},\ell}:=\delta_{\ell}^{{\textbf 0}}\wedge \beta_{\tr_3,-N}^{{\textbf 0},\tr}$.
By the similar way as \cite[Theorem 3.3]{Song-Li2021},
 there exists a $\tr_5=\tr_5(\e,T)\in(0,\tr_4]$ such that
\begin{align*}
\sup_{\tr\in(0,\tr_5]}\E\Big(\sup_{0\leq t\leq T}\v z^{{\textbf 0},\tr}(t)-x^{{\textbf 0}}(t)\v^2\textbf{1}_ {\{\gamma^{\tr_3,\tr}_{{\textbf 0},\ell}\geq T\}}\Big)<\frac{\e^3}{16}.
\end{align*}
This, along with the Chebyshev inequality, implies that for any $\tr\in(0,\tr_5]$,
\begin{align}\la{n3.32}
&\sup_{\tau\leq k\tr\leq T}\PP\Big\{\gamma^{\tr_3,\tr}_{{\textbf 0},\ell}\geq T, \sup_{t\in[t_k-\tau,t_k]}\v z^{{\textbf 0},\tr}(t)-x^{{\textbf 0}}(t)\v\geq\frac{\e}{2}\Big\}\nn\\
\leq&\PP\Big\{\gamma^{\tr_3,\tr}_{{\textbf 0},\ell}\geq T, \sup_{0\leq t\leq T}\v z^{{\textbf 0},\tr}(t)-x^{{\textbf 0}}(t)\v\geq\frac{\e}{2}\Big\}\nn\\
\leq&\frac{4}{\e^2}\E\big(\sup_{0\leq t\leq T}\v z^{{\textbf 0},\tr}(t)-x^{{\textbf 0}}(t)\v^2\textbf{1}_ {\{\gamma^{\tr_3,\tr}_{{\textbf 0},\ell}\geq T\}}\big)
<\frac{\e}{4}.
\end{align}
It follows from \eqref{n3.31} and \eqref{n3.32} that
\begin{align}\la{n3.33}
&\sup_{\tr\in(0,\tr_5]}\sup_{\tau\leq k\tr\leq T}\PP\Big\{\gamma^{\tr_3,\tr}_{{\textbf 0},\ell}\geq T, \|Y^{{\textbf 0},\tr}_{t_k}-x^{{\textbf 0}}_{t_{k}}\|\geq\e\Big\}\nn\\
\leq&\sup_{\tr\in(0,\tr_5]}\sup_{\tau\leq k\tr\leq T}\PP\Big\{ \sup_{t\in[t_k-\tau,t_k]}\v y^{{\textbf 0},\tr}(t)-z^{{\textbf 0},\tr}(t)\v\geq\frac{\e}{2}\Big\}\nn\\
&+\sup_{\tr\in(0,\tr_5]}\sup_{\tau\leq k\tr\leq T}\PP\Big\{\gamma^{\tr_3,\tr}_{{\textbf 0},\ell}\geq T, \sup_{t\in[t_k-\tau,t_k]}\v z^{{\textbf 0},\tr}(t)-x^{{\textbf 0}}(t)\v\geq\frac{\e}{2}\Big\}
<\frac{3\e}{8}.
\end{align}
Since for any $\tr\in(0,\tr_5]$, $$y^{{\textbf 0},\tr}(\theta)=\Gamma_{\Phi,\nu}^{\triangle}(0)=0=x^{{\textbf 0},\tr}(\theta),~~~\theta\in[-\tau,0],$$
it is obvious that
\begin{align*}
&\sup_{0\leq k\tr\leq \tau}\|Y_{t_k}^{{\textbf 0},\tr}-x^{{\textbf 0},\tr}_{t_k}\|=\sup_{-\tau\leq t\leq \tau}\v y^{{\textbf 0},\tr}(t)-x^{{\textbf 0},\tr}(t)\v\nn\\
=&\sup_{0\leq t\leq \tau}\v y^{{\textbf 0},\tr}(t)-x^{{\textbf 0},\tr}(t)\v=\|Y_{\tau}^{{\textbf 0},\tr}-x^{{\textbf 0},\tr}_{\tau}\|.
\end{align*}
According to \eqref{n3.30}, \eqref{delta},  and \eqref{n3.33} yields
\begin{align}\la{4.59nnn}
&\sup_{\tr\in(0,\tr_5]}\sup_{0\leq k\tr\leq T}\PP\Big\{\|Y^{{\textbf 0},\tr}_{t_k}-x^{{\textbf 0}}_{t_{k}}\|\geq\e\Big\}\nn\\
=&\sup_{\tr\in(0,\tr_5]}\sup_{\tau\leq k\tr\leq T}\PP\Big\{\|Y^{{\textbf 0},\tr}_{t_k}-x^{{\textbf 0}}_{t_{k}}\|\geq\e\Big\}\nn\\
\leq&\sup_{\tr\in(0,\tr_5]}\sup_{\tau\leq k\tr\leq T}\PP\Big\{\gamma^{\tr_3,\tr}_{\textbf 0,\ell}\geq T, \|Y^{{\textbf 0},\tr}_{t_k}-x^{{\textbf 0}}_{t_{k}}\|\geq\e\Big\}\nn\\
&+\sup_{\tr\in(0,\tr_5]}\PP\Big\{\beta_{\tr_{3},-N}^{{\textbf 0},\tr}< T\Big\}
+\PP\Big\{\delta_{\ell}^{{\textbf 0}}< T\Big\}
<\e.
\end{align}
The proof is
therefore complete.
\end{proof}

\begin{proofs}\emph{Proof of Theorem \textup{\ref{nth4}}}
For any $\e\in(0,1)$, in view of Lemma \ref{nth1}  there exists a  $ T_1^{*}>0$ such that for any $t\geq T_1^*$,
\begin{align}\la{3.47n}
d_{\Xi}\big(\pi(\cdot),\mu_{t}^{{\textbf 0}}(\cdot)\big)<\frac{\e}{3}.
\end{align}
By virtue of Theorem \ref{nth3}  there exists a $T_2^*>\tau$  such that for any $\tr\in(0,\hat \tr]$ and $k\tr\geq T_2^*$,
\begin{align}\la{3.48n}
d_{\Xi}\big(\mu_{t_k}^{{\textbf 0},\tr}(\cdot),\pi^{\tr}(\cdot)\big)<\frac{\e}{3}.
\end{align}
Let $T^*:=T_1^*\vee T_2^*$.
By Lemma \ref{noth2}  there exists a $\tr^*\in(0,\hat\tr]$ such that for any $\tr\in(0,\tr^*]$ and $0\leq k\tr\leq T^*+1$,
\begin{align}\la{3.49n}
\PP\Big\{\|x^{{\textbf 0}}_{t_k}-Y^{{\textbf 0},\tr}_{t_k}\|\geq\frac{\e}{6}\Big\}<\frac{\e}{12}.
\end{align}
Furthermore, due to the definition of $d_{\Xi}$, and \eqref{3.49n}, we deduce that for any $\tr\in(0,\tr^*]$ and $0\leq k\tr\leq T^*+1$,
\begin{align}\la{3.50n}
d_{\Xi}\big(\mu_{t_k}^{{\textbf 0}}(\cdot),\mu_{t_k}^{{\textbf 0},\tr}(\cdot)\big)
=& \sup_{\Psi\in\Xi}\E\big\v\Psi(x^{{\textbf 0}}_{t_k})-\Psi(Y^{{\textbf 0},\tr}_{t_k})\big\v
\leq \E\big(2\wedge\|x^{{\textbf 0}}_{t_k}-Y^{{\textbf 0},\tr}_{t_k}\|\big)\nn\\
=&\E\big((2\wedge\|x^{{\textbf 0}}_{t_k}-Y^{{\textbf 0},\tr}_{t_k}\|)\textbf 1_{\{\|x^{{\textbf 0}}_{t_k}-Y^{{\textbf 0},\tr}_{t_k}\|\geq\frac{\e}{6}\}}\big)\nn\\
&+ \E\big((2\wedge\|x^{{\textbf 0},\tr}_{t_k}-Y^{{\textbf 0},\tr}_{t_k}\|)\textbf 1_{\{\|x^{{\textbf 0}}_{t_k}-Y^{{\textbf 0},\tr}_{t_k}\|<\frac{\e}{6}\}}\big)\nn\\
\leq &\frac{\e}{6}+\frac{\e}{6}=\frac{\e}{3}.
\end{align}
For any  $\tr\in(0,\tr^*]$, choose an integer $k_0$ such that $T^*\leq k_0\tr\leq T^*+1$.
It follows from \eqref{3.47n}, \eqref{3.48n}, and \eqref{3.50n} that
\begin{align}
d_{\Xi}(\pi(\cdot),\pi^{\tr}(\cdot))
\leq&d_{\Xi}\big(\pi(\cdot),\mu_{t_{k_0}}^{{\textbf 0}}(\cdot)\big)+d_{\Xi}(\mu_{t_{k_0}}^{{\textbf 0},\tr}(\cdot),\pi^{\tr}(\cdot))+d_{\Xi}(\mu_{t_{k_0}}^{{\textbf 0}}(\cdot),\mu_{t_{k_0}}^{{\textbf 0},\tr}(\cdot))\nn\\
<&\frac{\e}{3}+\frac{\e}{3}+\frac{\e}{3}=\e.
\end{align}
The proof is therefore complete.
\end{proofs}
\section{Numerical experiments}\la{exper}
In this section we provide an example and numerical simulations to illustrate the efficiency of TEMSP \eqref{temlis}.
\begin{example}
{\rm
Consider the following  nonlinear SDDE
\begin{align}\la{n5.1}
\left\{
\begin{array}{ll}
\mathrm dx_{1}(t)=(1-x_1(t)-3x_{1}^3(t))\mathrm dt+x^2_{2}(t-1)\mathrm dW_1(t)\\
\mathrm dx_{2}(t)=-(x_2(t)+3x_{2}^3(t))\mathrm dt+x^2_{1}(t-1)\mathrm dW_2(t)
\end{array}
\right.
\end{align}
with  different initial data \begin{align*}
&\xi_1(\theta)=(\xi_{11}(\theta),\xi_{12}(\theta))^{T}=(B_1(-\theta),B_2(-\theta))^T,\nn\\
&\xi_2(\theta)=(\xi_{21}(\theta),\xi_{22}(\theta))^{T}=(2\theta, \theta+1)^T,\nn\\
&\xi_3(\theta)=(\xi_{31}(\theta),\xi_{32}(\theta))^{T}=(-3,4)^T
\end{align*}
for any $\theta\in[-\tau,0]$, where
$(B_1(\cdot), B_2(\cdot))$ is a two-dimensional Brownian motion, which is independent of $(W_1(\cdot),W_2(\cdot))$.

For any  $R>0$ and $x,~\bar x,~y,~\bar y\in \RR$ with $\v x\v\vee\v \bar x\v\vee\v y\v \vee\v\bar y\v\leq R$, we compute
\begin{align}\la{n5.2}
&\v f(x, y)-f(\bar{x},\bar  y)\v\nn\\
=&\big\v(1+3x_1^2+3x_1\bar x_1+3\bar x_1^2)^2(x_1-\bar x_1)^2+(1+3x_2^2+3x_2\bar x_2+3\bar x_2^2)^2(x_2-\bar x_2)^2\big\v^{1/2}\nn\\
\leq&\big\v(1+9R^2)^2(x_1-\bar x_1)^2+(1+9R^2)^2(x_2-\bar x_2)^2\big\v^{1/2}=(1+9R^2)\v x-\bar x\v,
\end{align}
and
\begin{align}\la{5.2n}
\v g(x, y)-g(\bar{x},\bar  y)\v
=&\big\v(y_2+\bar y_2)^2(y_2-\bar y_2)^2+(y_1+\bar y_1)^2(y_1-\bar y_1)^2\big\v^{1/2}\nn\\
\leq&2R\v y-\bar y\v\leq(2R\vee 4R^2)(1\wedge\v y-\bar y\v).
\end{align}
It is straightforward to see that (H1) holds.
In addition, it is easy to verify that for any $x,~\bar x,~y,~\bar y\in \RR$,
\begin{align*}
&\big\langle2x ,f(x,y)\big\rangle+\v g(x,y)\v^2\nn\\
= &2x_1-2\v x\v^2-6(x_1^4+x_2^4)+(y_1^4+y_2^4)
\leq L-3\v x\v^4+\v y\v^4,
\end{align*}
which implies that (H2) holds with $a_2=3,a_3=1$. Furthermore,
\begin{align*}
&2\big\langle x-\bar x ,f(x,y)-f(\bar x,\bar y)\big\rangle+\v g(x,y)-g(\bar x,\bar y)\v^2\nn\\
=&-2\v x-\bar x\v^2-6\big(x_1^2+x_1\bar x_1+\bar x_1^2\big)\v x_1-\bar x_1\v^2-6\big(x_2^2+x_2\bar x_2+\bar x_2^2\big)\v x_2-\bar x_2\v^2\nn\\
&+(y_1+\bar y_1)^2(y_1-\bar y_1)^2+(y_2+\bar y_2)^2(y_2-\bar y_2)^2\nn\\
\leq&-2\v x-\bar x\v^2-3(x_1+\bar x_1)^2(x_1-\bar x_1)^2-3(x_2+\bar x_2)^2(x_2-\bar x_2)^2\nn\\
&+(y_1+\bar y_1)^2(y_1-\bar y_1)^2+(y_2+\bar y_2)^2(y_2-\bar y_2)^2,
\end{align*}
which implies that (H3) holds with $b_1=2, b_2=0$.  Remark \ref{re1} allows us to conclude that
the segment process $\{x_{t}^{\xi}\}_{t\geq 0}$ of \eqref{n5.1} has a unique invariant measure $\pi(\cdot)\in\mathcal{P}(C)$.

According to \eqref{n3.1}, \eqref{n5.2} and \eqref{5.2n}, we take  $\Phi(R)=16R^4$ for all $R\geq1$. Then, $$\Phi^{-1}(R)=R^{1/4}/2,~~~\hbox{for~}R\geq 16.$$
 Let $\nu=1/100$. By virtue of \eqref{n3.2} a direct
computation yields that for any $\tr\in(0,1)$,
$$\Gamma_{\Phi,\nu}^{\tr}(x)=\Big(\v x\v\wedge\tr^{-\frac{1}{400}}\Big)\frac{x}{\v x\v}.$$
Choose $\hat\tr=10^{-3}$, and compute
\begin{align*}
&1.1264=a_2-6K^2 \hat\tr^{1-2\nu}>a_3=1~~\hbox{and}~~ 0.8243=b_1- 4K^2\hat\tr^{1-2\nu}>b_2=0.
\end{align*}
This, along with Theorem \ref{nth3} and  Theorem \ref{nth4} implies that for any $\tr\in(0,\hat\tr]$,  TEMLISP $\{
Y_{t_k}^{\xi,\tr}\}_{k\geq 0}$ defined by \eqref{temlis} is asymptotically stable in distribution and admits a unique numerical invariant measure $\pi^{\tr}(\cdot)$ satisfying $\lim_{\tr\rightarrow0}d_{\Xi}(\pi(\cdot),\pi^{\tr}(\cdot))=0$.

To test the efficiency of  TEMSP \eqref{temlis},  we carry out some numerical simulations  using MATLAB.  
For each of the numerical experiments performed,  the red dotted line,  the blue long and short dash line, and the green line represent the sample means of $\{\Psi_l(Y_{t_k}^{\xi_i,\tr_j})\}_{k\geq 0}$ by TEMSP  starting from different initial data $\xi_1,~\xi_2,~\xi_3$, respectively.
Figure \ref{mean} depicts  the sample means of $\Psi_l(Y^{\xi_i,\tr_j}_{t_k})$ ($j,l=1,2$)  with different initial data $\xi_{i}~(i=1,2,3)$ in the interval $[0,10]$ for $2000$ sample points and different step sizes $\tr_1= 10^{-3},~\tr_2=10^{-4} $,   where test functionals  $\Psi_1(\cdot)=\cos(\|\cdot\|)$ and $\Psi_2(\cdot)=2\wedge\|\cdot \|$.
Figure \ref{mean} depicts that each $\E\Psi_l(Y^{\xi_i,\tr_j}_{t_k})$  starting from different initial data tends to a constant as $t_k\rightarrow\infty$, which implies the existence of numerical invariant measure.
Figure \ref{ECDF} displays  empirical cumulative
distribution functions (Empirical CDF) of   $\Psi_l(Y^{\xi_i,\tr_j}_{10})$ for $2000$ sample points.
 \begin{figure}[htp]
  \begin{center}
\includegraphics[width=12cm,height=8cm]{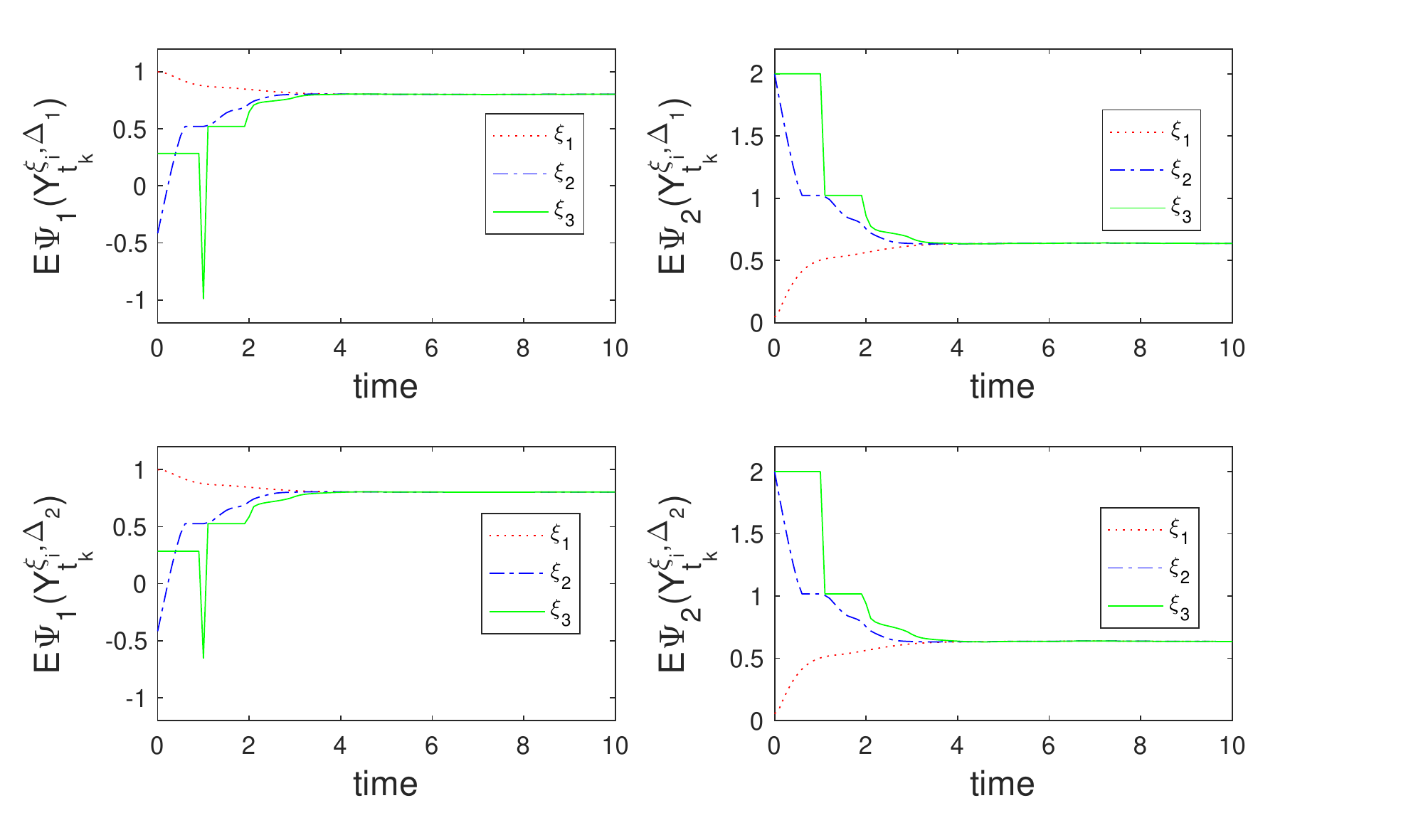}
   \end{center}
 \caption{Sample means of   $\Psi_l(Y^{\xi_i,\tr_j}_{t_k})$ $(i=1,2,3;j,l=1,2)$ 
 with different initial data $\xi_{i} $ in the interval $[0,10]$ for 2000 sample points and different step sizes   $\tr_1=10^{-3},~\tr_2=10^{-4}$, and test functionals   $\Psi_1(\cdot)=\cos(\|\cdot\|)$, $\Psi_2(\cdot)=2\wedge\|\cdot\|$.}
\label{mean}
\end{figure}
 \begin{figure}[htp]
  \begin{center}
\includegraphics[width=12cm,height=8cm]{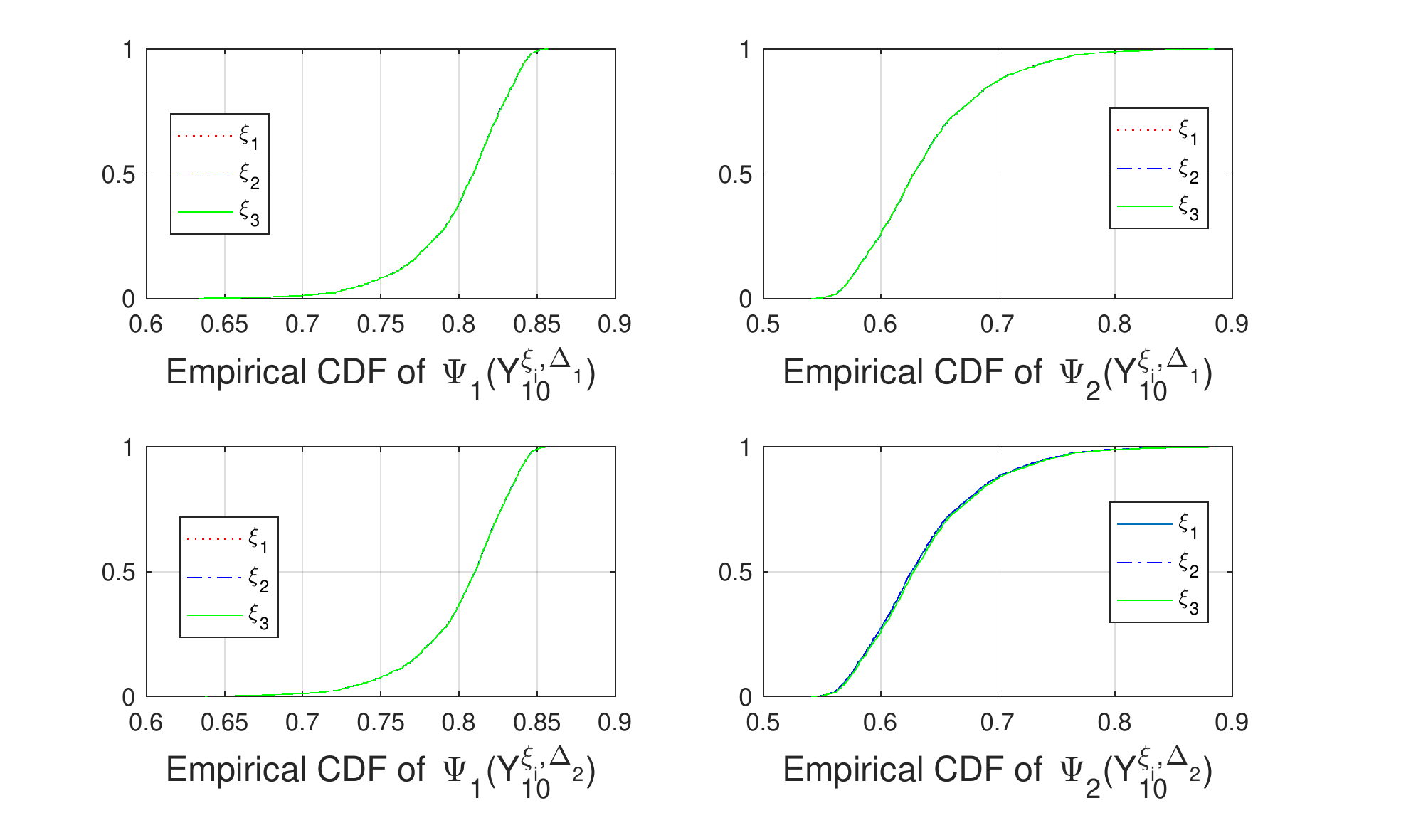}
   \end{center}
 \caption{Empirical cumulative
distribution functions of   $\Psi_l(Y^{\xi_i,\tr_j}_{10})$ $(j,l=1,2)$ with different initial data $\xi_{i}~(i=1,2,3)$  for 2000 sample points and different step sizes $\tr_{j}~(j=1,2)$, where step sizes $\tr_1=10^{-3},~\tr_2=10^{-4}$, and  test functionals  $\Psi_1(\cdot)=\cos(\|\cdot\|)$, $\Psi_2(\cdot)=2\wedge\|\cdot\|$.}
\label{ECDF}
\end{figure}
}
\end{example}

\section{Summary}\la{summ}
We investigate the explicit approximation of the invariant measure for nonlinear SDDEs with non-globally Lipschitz diffusion coefficients. The appropriate numerical segment processes  TEMSP are proposed. Since the mean square of the  exact solutions  may be not  uniformly bounded and attracted, to overcome this difficulty,  we take advantage of  the linear structure of  TEMSP  to prove the uniform boundedness and attraction  in probability. Finally we yield the existence of the unique numerical invariant measure, which converges to the exact one in the  Fortet-Mourier distance. 

\backmatter

\section*{Declarations}
\begin{itemize}
\item\textbf{Ethics approval} ~Not applicable.
\item\textbf{Availability of supporting data}~All data or codes generated during the study are available from the corresponding author by request.
\item \textbf{Conflict of interest/Competing interests}~The authors declare no competing interests.
\item \textbf{Funding}
~The research of Xiaoyue Li was supported by the National Natural Science Foundation of China (No. 11971096), the National Key R$\&$D Program of China (2020YFA0714102), the Natural Science Foundation of Jilin Province (No. YDZJ202101ZYTS154), the Education Department of Jilin Province (No. JJKH20211272KJ), and the Fundamental Research Funds for the Central Universities. And the research of Xuerong Mao was supported by the Royal Society (WM160014, Royal Society Wolfson Research Merit Award),
the Royal Society of Edinburgh (RSE1832),
and Shanghai Administration of Foreign Experts Affairs (21WZ2503700, the Foreign Expert Program).
\item \textbf{Authors' contributions}~Authors in this paper contributed equally to this work.
\item\textbf{Acknowledgments} ~Not applicable.
\end{itemize}

\end{document}